\newcommand{\for}{\quad \text{for} \;\;}
\newcommand{\where}{\quad \text{where} \;\;}
\newcommand{\ifs}{\quad \text{if} \;\;}
\newcommand{\di}{\,\mathrm{d}}
\newcommand{\dd}{\mathrm{d}}
\renewcommand{\P}{\mathbb{P}}
\newcommand{\Px}[2]{\P_{#2} {\left( {#1} \right)}}
\newcommand{\Pc}[3]{\P_{#3} {\left( {#1} \, \middle| \, {#2} \right)}}
\newcommand{\Q}{\mathbb{Q}}
\newcommand{\Qx}[2]{\Q_{#2} {\left( {#1} \right)}}
\newcommand{\Qc}[3]{\Q_{#3} {\left( {#1} \, \middle| \, {#2} \right)}}
\newcommand{\E}{\mathbb{P}}
\newcommand{\Ex}[2]{\E_{#2} {\left[ {#1} \right]}}
\newcommand{\Ec}[3]{\E_{#3} {\left[ {#1} \, \middle| \, {#2} \right]}}
\newcommand{\ind}{\mathbb{1}}
\newcommand{\gf}{\mathfrak{g}}
\newcommand{\mf}{\mathfrak{m}}
\newcommand{\rf}{\mathfrak{r}}
\newcommand{\T}{\mathcal{T}}
\newcommand{\K}{\mathcal{K}}
\newcommand{\G}{\mathcal{G}}
\newcommand{\tal}{{\tau, \alpha, \ell}}
\newcommand{\ta}{{\tau, \alpha}}
\newcommand{\prog}{\mathrm{Pr}}
\newcommand{\anc}{\mathrm{An}}
\let\mathbb=\mathds
\newtheorem{theorem}{Theorem}[section]
\newtheorem*{theorem*}{Theorem}
\newtheorem{lemma}[theorem]{Lemma}
\newtheorem{corollary}[theorem]{Corollary}
\newtheorem{proposition}[theorem]{Proposition}
\newtheorem{example}[theorem]{Example}
\newtheorem{assumption}[theorem]{Assumption}
\title{Inhomogeneous branching trees with symmetric and asymmetric offspring and their genealogies}
\author{Frederik M. Andersen\footnote{Corresponding author. Email: \url{fman@sund.ku.dk}.} \footnote{University of Copenhagen}, Marc A. Suchard\footnote{University of California, Los Angeles}, Carsten Wiuf\footnotemark[1], Samir Bhatt\footnotemark[1] \footnote{Imperial College London}}
\date{}
\begin{document}

\maketitle

\begin{abstract}
    We define symmetric and asymmetric branching trees, a class of processes particularly suited for modeling genealogies of inhomogeneous populations where individuals may reproduce throughout life. In this framework, a broad class of Crump-Mode-Jagers processes can be constructed as (a)symmetric Sevast'yanov processes, which count the branches of the tree. Analogous definitions yield reduced (a)symmetric Sevast'yanov processes, which restrict attention to branches that lead to extant progeny. We characterize their laws through generating functions. The genealogy obtained by pruning away branches without extant progeny at a fixed time is shown to satisfy a branching property, which provides distributional characterizations of the genealogy.
\end{abstract}

\section{Introduction}
Populations arise from ancestors and evolve through reproduction, giving their history a natural branching structure: one ancestor (the root) gives rise to offspring, each of whom may produce further offspring. This yields a rooted tree, where the direction from root to leaves encodes time and ancestry, and the leaves represent the individuals alive today. If reproduction occurs throughout life, the population is modeled by a splitting tree \cite{friedman_depthfirst_1997}, where offspring branches attach along the maternal line at their birth times. If reproduction occurs only at death, the splitting tree reduces to a branching tree \cite{friedman_depthfirst_1997} with branches connected only at terminal points. The genealogy is the minimal subtree that relates the extant individuals at a fixed time, obtained by pruning away branches that leave no extant progeny.

We study inhomogeneous populations in which branch lengths and reproduction laws depend on birth time but remain conditionally independent across branches. This yields the fundamental branching property: disjoint subtrees evolve independently, each governed by the same law as the whole tree, conditional on the pruned tree obtained by removing the subtrees \cite{jagers_general_1989,chauvin_arbres_1986}. For general splitting trees this property does not ensure conditional independence between the subtree rooted at the partial branch of a mother after a reproduction event and the subtrees rooted at the offspring of that event, complicating genealogical analysis. In Markovian models with exponential lifetimes and Poisson births, this difficulty can be resolved by killing and resurrecting the mother as an additional offspring; memorylessness guarantees that the construction yields an equivalent branching tree. 

Generalizing this idea we define the asymmetric branching tree, where each branch is marked by both a birth time and an age, with nonzero ages arising precisely from such resurrections (see Figure \ref{fig:sym_asym}). The asymmetric branching tree is introduced in the framework of Neveu, Chauvin and Jagers \cite{neveu_arbres_1986,chauvin_arbres_1986,jagers_general_1989} on an explicitly defined sample space of time- and age-marked branching trees. We present a fundamental decomposition of asymmetric branching trees, which together with an extension of the branching property of \cite{chauvin_arbres_1986,taib_branching_1992}, forms the backbone of the main arguments.

Branching processes count the width of family trees. The most general examples are the Crump-Mode-Jagers (CMJ) process for splitting trees \cite{crump_general_1968,crump_general_1969, jagers_general_1969,penn_intrinsic_2023} and the Sevast'yanov process for branching trees \cite{sevastyanov_age-dependent_1964}. In parallel, we introduce the asymmetric Sevast'yanov process, which counts the width of asymmetric branching trees. Using random characteristics \cite{jagers_convergence_1974}, we further define the reduced asymmetric Sevast'yanov process, which counts only branches with extant progeny, that is, the width of the genealogy.

Our first main result is a full distributional characterization of the reduced (a)symmetric Sevast'yanov process: its generating function is shown to be the unique solution of an integral equation, under weak regularity assumptions on lifetimes and offspring distributions. To our knowledge, no comparable results exist for reduced continuous-time branching processes beyond the reduced birth-death process \cite{nee_reconstructed_1994}. As a corollary, we also present the generating function of the simple (a)symmetric Sevast'yanov process, extending the classical results of \cite{bellman_theory_1948,sevastyanov_age-dependent_1964,crump_general_1968,kimmel_point-process_1983,penn_intrinsic_2023}, and we establish differentiability in time of the generating functions whenever the branch lengths have continuously differentiable densities.

Our second main result is a rigorous construction of the genealogy as a branching tree of its own, equipped with a novel genealogical branching property. This is obtained under a change of measure, akin to the idea of \cite{jagers_general_2008}, and an enlarged conditioning $\sigma$-algebra. With this property we prove a series of distributional results for genealogies, including a complete recursive characterization, a Radon-Nikodym derivative with respect to a fixed reference measure in the symmetric offspring case, and an efficient simulation algorithm. Although developed in the concrete setting of asymmetric branching trees, the genealogical branching property extends directly to general multi-type branching trees with arbitrary measurable type spaces.

The genealogy of Markovian branching trees has previously been characterized in law, both for the entire tree and under different sampling schemes \cite{thompson_human_1975,nee_reconstructed_1994,gernhard_conditioned_2008,stadler_incomplete_2009,johnston_genealogy_2019,harris_coalescent_2020}. In the case of both Markovian and non-Markovian splitting trees, genealogies have been represented by coalescent point processes derived from the contour process, which fully encodes the tree and remains Markov whenever reproduction is age-independent \cite{popovic_asymptotic_2004,lambert_contour_2010,lambert_birthdeath_2013}. These approaches break down in genuinely age-dependent settings, where our symmetric and asymmetric branching trees provide a broader class of non-Markovian trees with age-dependent reproduction, within which genealogies can be analyzed. Apart from the homogeneous, binary, and symmetric case treated in \cite{jones_calculations_2011}, we are not aware of other full distributional characterizations in this setting.

Branching processes have a long history in infectious disease modeling \cite{ball_strong_1995,barbour_approximating_2013}, and recent work has emphasized more realistic infection dynamics while preserving mathematical and computational tractability \cite{pakkanen_unifying_2023,penn_intrinsic_2023,curran-sebastian_modelling_2025,levesque_model_2021}. The asymmetric Sevast'yanov process provides such a tractable model, accommodating both inhomogeneous, age-dependent infection rates and a general distribution of secondary infections. In phylogenetics, genealogies of branching trees serve as models for reconstructed phylogenies, where their distributions are used both as priors in Bayesian tree reconstruction \cite{rannala_probability_1996,huelsenbeck_bayesian_2001} and in phylodynamic inference \cite{morlon_phylogenetic_2014}. Empirical evidence suggests that Markovian branching models often fail to capture evolutionary dynamics accurately \cite{blum_which_2006,jones_calculations_2011,khurana_limits_2024}, while simulation studies indicate that non-Markovian models with age-dependent reproduction provide a substantially better fit \cite{jones_calculations_2011,hagen_age-dependent_2015}. Our symmetric and asymmetric branching trees, together with their genealogies, therefore offer a flexible yet tractable framework for phylogenetic modeling.

\subsubsection*{Structure of the paper}
The paper is organized as follows. In Section \ref{sec:branchingTrees} we introduce symmetric and asymmetric branching trees within the Neveu-Chauvin formalism, establish the fundamental decomposition, and state the branching property. In Section \ref{sec:branchingprocesses} we define the associated simple and reduced Sevast'yanov branching processes, provide conditions ensuring their regularity, derive integral equations for their generating functions, and illustrate the framework through age-dependent birth-death processes. Section \ref{sec:GenTrees} develops the notion of genealogies of branching trees, including their construction, genealogical branching property, and distributional characterizations, and we present a recursive simulation scheme. Finally, all proofs and technical lemmas are collected in Section \ref{sec:proof}.

\subsubsection*{Notation}
For a probability measure $\P$ and an integrable random variable $X$, we write
\begin{align*}
    \P(X) = \int X \di\P
\end{align*}
for the expectation of $X$ with respect to $\P$.

\section{Branching trees}\label{sec:branchingTrees}
\subsection{Neveu trees}
Family trees can be represented as genealogically sensible collections of branches labeled by ancestry via the \emph{Ulam-Harris labeling} \cite{neveu_arbres_1986,kechris_classical_1995},
\begin{align*}
    U = \bigcup_{n \geq 0} \mathbb N^n
\end{align*}
with $\mathbb N^0 = \{0\}$. A label $x_1 \dots x_n \in \mathbb N^n$ denotes the $x_n$-th offspring of $x_1 \dots x_{n-1}$, with first-generation branches labeled by $k \in \mathbb N$ as offspring of the root, denoted $0$. Since all branches descend from $0$, the root is omitted from labels.

The \emph{mother} of a branch $x = x_1 \dots x_{n-1} x_n$ is $\mf x = x_1 \dots x_{n-1}$, its \emph{generation} $\gf x = n$, and its \emph{rank} (birth order among siblings) $\rf x = x_n$. For $\gf x = 1$ we set $\mf x = 0$, and by convention $\mf 0 = \rf 0 = \gf 0 = 0$. Concatenation of two labels $x,y \in U$ is denoted by $xy \in U$, in particular, if $x = x_1 \dots x_n \in \mathbb N^n$ and $k \in \mathbb N$, then $xk = x_1 \dots x_n k \in \mathbb N^{n+1}$.

A \textit{Neveu tree} is a subset $u \subseteq U$ such that:
\begin{enumerate}[label=(\alph*)]
    \item $0 \in u$, \label{cond:neveuA}
    \item If $x \in u$ then $\mf x \in u$, \label{cond:neveuB}
    \item For each $x \in u$ there is a $n_x \in \mathbb N_0$ such that $xk \in u$ if and only if $1 \leq k \leq n_x$. \label{cond:neveuC}
\end{enumerate}
and we denote the set of Neveu trees by $\Gamma$. The maternal operator $\mf$ induces a partial order $\preceq$ on any $u \in \Gamma$: for $x,y \in u$,
\begin{align*}
    x \preceq y \iff x = \mf^k y \quad \text{for some } k \geq 0
\end{align*}
phrased as $y$ is a \textit{progeny} of $x$ or $x$ is an \textit{ancestor} of $y$.

The sets of \emph{progeny} and \emph{ancestors} of a branch $x \in U$ are defined as
\begin{align*}
    \prog_x(u) = \{y \in u \mid y \succeq x\}, \quad \anc_x(u) = \{y \in u \mid y \prec x\}, \for u \in \Gamma_x,  
\end{align*}
where $\Gamma_x = \{u \in \Gamma \mid x \in u\}$ is the space of Neveu trees containing the branch $x$. On this space we also define the \textit{offspring number} of $x$,
\begin{align*}
    N_x(u) = |\{y \in u \mid y = xk \text{ for some } k \in \mathbb N\}|,\for u \in \Gamma_x.
\end{align*}
See Figure \ref{fig:sym_asym} for an illustration of these definitions.

\subsection{Symmetric and asymmetric branching trees}
A Neveu tree $u$ is given temporal structure by marking it with a non-negative birth time, a non-negative age, and to each branch a strictly positive branch length. Such a marked Neveu tree is called a \textit{branching tree}, the space of which is then given by
\begin{align*}
    \Omega = [0, \infty)^2 \times \bigcup_{u \in \Gamma} \left( \{u\} \times (0, \infty)^u\right).
\end{align*}
For any $x \in U$, we write $\Omega_x = [0, \infty)^2 \times \bigcup_{u \in \Gamma_x}( \{u\} \times (0, \infty)^u)$ for the subset of branching trees containing $x$.

If $\gamma \colon \Omega \to \Gamma$ is the projection of a branching tree onto its underlying Neveu tree, we identify the mappings originally defined on $\Gamma_x$ with their compositions with $\gamma$, thereby lifting them to $\Omega_x$,
\begin{align*}
    N_x = N_x \circ \gamma, \,\prog_x = \prog_x \circ \,\gamma, \,\anc_x = \anc_x \circ \,\gamma.
\end{align*}
We also introduce the canonical projections
\begin{align*}
    \tau\colon \Omega \to [0, \infty), \quad \alpha\colon \Omega \to [0,\infty), \quad \text{and} \quad  L_x \colon \Omega_x \to (0, \infty) \for x\in U
\end{align*}
which record, respectively, birth time, age, and each branch length. For the root we suppress the subscript, writing $L = L_0$ and $N = N_0$. Endowing $\Gamma$ with the $\sigma$-algebra $\sigma(\Gamma_x : x \in U)$ and letting $\mathscr B_{E}$ be the Borel $\sigma$-algebra over any subset $E \subseteq \mathbb R$, we can equip $\Omega$ with the $\sigma$-algebra
\begin{align*}
    \mathscr F = \mathscr B_{[0, \infty)}^{\otimes 2} \otimes \sigma(\gamma, L_x: x \in U),
\end{align*}
and each $\Omega_x$ with the corresponding trace $\sigma$-algebra.

The canonical random branching tree $\T\colon \Omega \to \Omega$ which we will study throughout is defined as the identity on $\Omega$,
\begin{align*}
    \T = \left(\tau, \alpha, \gamma, (L_x)_{x\in \gamma}\right).
\end{align*}
A fundamental property of branching trees is that the progeny of any given branch initiates a branching tree of its own. For any $x\in U$, let $\theta_x = \{y \in U \mid xy \in \gamma\}$ be the Neveu subtree rooted at $x$, defined on $\Omega_x$. Its birth time is given recursively as
\begin{align*}
    \tau_x = \tau_{\mf x} + L_{\mf x}, \quad \tau_0 = \tau.
\end{align*}
We consider two conventions of assigning an age to the subtree,
\begin{itemize}
    \item \textit{Symmetric age}: any subtree is born with age $0$,
    \begin{align*}
        \alpha_x = 0.
    \end{align*}
    \item \textit{Asymmetric age}: subtrees rooted at rank $1$ branches inherit the age of their mother at their time of birth, while any other subtree is born with age $0$,
    \begin{align*}
     \alpha_x = \begin{cases}
        \alpha_{\mf x} + L_{\mf x}, & \text{if} \quad  \rf x = 1 \\
        0, & \text{if} \quad  \rf x \neq 1
    \end{cases}
    \end{align*}
    with $\alpha_0 = \alpha$.
\end{itemize}
The branching subtree rooted at $x$ is then given on $\Omega_x$ by
\begin{align*}
    \T_x = \left(\tau_x, \alpha_x,  \theta_x, \left(L_y\right)_{y\in \theta_x} \right),
\end{align*}
where we note that $\T_0 = \T$, and that we can translate any mapping $\phi$ defined on $\Omega$ to $\phi_x = \phi \circ \T_x$ on $\Omega_x$, in particular,
\begin{align*}
    \tau_x = \tau \circ \T_x \quad \alpha_x = \alpha \circ \T_x, \quad N_x = N \circ \T_x \quad \text{and} \quad L_x = L \circ \T_x,
\end{align*}
which concur with our previous definitions.

\begin{figure}[t]
    \centering
    \includegraphics[width=1\linewidth]{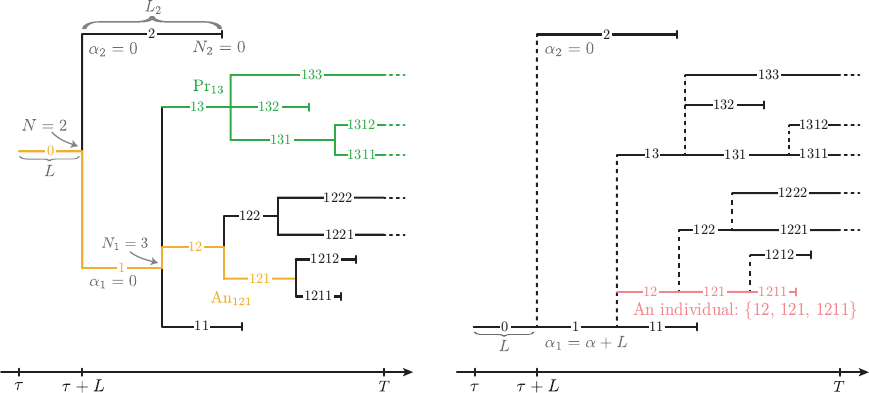}
    \vspace{-2em}
    \caption{Symmetric (left) and asymmetric (right) branching trees, explicitly labeled, both have the same Neveu tree and branch lengths. In the symmetric tree, the progeny set $\prog_{13}$ (green), the ancestor set $\anc_{121}$ (yellow), and selected variables are highlighted. In the asymmetric tree, the individual ${12,121,1211}$ (red) and selected variables are highlighted. The asymmetric representation emphasizes that rank-1 branches are continuations of their mother.}
    \label{fig:sym_asym}
\end{figure}

A random branching tree equipped with asymmetric (resp. symmetric) birth ages is called an asymmetric (resp. symmetric) branching tree, see Figure \ref{fig:sym_asym} for an illustration. In an asymmetric branching tree, a consecutive sequence of rank $1$ branches, initiated by a non-rank $1$ branch, can naturally be interpreted as a single individual who gives birth throughout her lifetime, with each branching event marking the arrival of a new offspring (see Figure \ref{fig:sym_asym} (right)). Writing, for $k\geq 0$,
\begin{align*}
    1^k = \underbrace{11\cdots1}_{k \text{ times}},
\end{align*}
the individual initiated by $x \in U$ with $\rf x \neq 1$ is the set
\begin{align*}
    \{x1^k \in \gamma\mid k \geq 0\} 
\end{align*}
defined on $\Omega_x$. This viewpoint allows us to regard the inter-branching periods of an individual as their own branch, while accommodating reproduction mechanisms that may depend both on global time and on the individual's age, a flexibility that will prove fruitful in the sequel. In symmetric branching trees we do not distinguish between a branch and an individual.

\subsection{Pulling trees apart and putting trees together}
On $\Omega_x$ for any non-root branch $x \in U \setminus \{0\}$, define the pruned branching tree obtained by removing the subtree rooted at $x$ as
\begin{align*}
    \K_x = \left(\tau, \alpha, \kappa_x ,\left(L_y\right)_{y\in \kappa_x} \right),
\end{align*}
where $\kappa_x = \{y \in \gamma \mid y \notin \prog_x\} = \gamma \setminus x\theta_x$. This provides us with a decomposition of any branching tree in $\Omega_x$,
\begin{align*}
     \T = \K_x \sqcup x \T_x = \left(\tau, \alpha, \kappa_x  \cup \theta_x, \left(L_y\right)_{y\in \kappa_x  \cup \theta_x } \right),
\end{align*}
where $\sqcup$ denotes the the union of a compatible subtree and a pruned branching tree.

This decomposition can be extended to multiple subtrees as long as the pruned subtrees are disjoint, i.e., no branch is added back more than once. We call a subset $I \subset U$ a \textit{stopping line} if for any two distinct elements $x, y \in I$ neither $x\preceq y$ nor $y\preceq x$. Denote the set of all stopping lines in $U$ by $\mathcal I$. Subtrees rooted at branches in a stopping line are necessarily disjoint, so we obtain the \textit{fundamental decomposition} (see Figure \ref{fig:decomp}),
\begin{align} \label{eq:fundaDecomp}
    \T = \K_I \sqcup \bigsqcup_{x \in I\cap \gamma} x \T_x,
\end{align}
where
\begin{align*}
    \K_I = \left(\tau, \alpha, \bigcap_{x\in I\cap \gamma} \kappa_x, \left(L_y\right)_{y\in \bigcap_{x\in I\cap \gamma} \kappa_x} \right), \ifs I \cap \gamma \neq \emptyset,
\end{align*}
and $\K_I = \T$ if $I\cap \gamma = \emptyset$.

\begin{figure}
    \centering
    \includegraphics[width=0.8\linewidth]{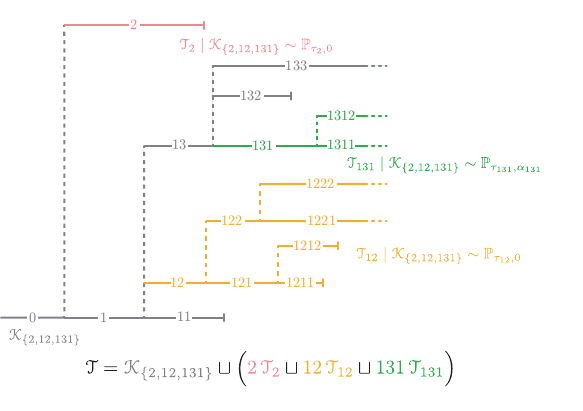}
    \vspace{-2em}
    \caption{Fundamental decomposition of the asymmetric branching tree from Figure \ref{fig:sym_asym} along the line $I={2,12,131}$.
    The pruned tree $\K_I$ is shown in grey, with the subtrees $\T_2$ (red), $\T_{12}$ (yellow), and $\T_{131}$ (green) highlighted. By Proposition \ref{thm:StrongBranching}, these subtrees are independent and each distributed as the whole tree, given the pruned tree. The same decomposition and branching property apply to the symmetric branching tree.}
    \label{fig:decomp}
\end{figure}

\subsection{The Branching Property} \label{sec:branchingprop}
The key probabilistic insight into branching trees is the existence of a Markov kernel on $\Omega$, the law of $\T$, that satisfies the branching property: conditional on the pruned tree left behind, disjoint subtrees are independent and distributed according to this kernel. This property extends to any collection of subtrees rooted along a random line, provided the line is optional with respect to the pruned tree.

We first specify the kernels governing the life of a branch, conditional on its birth time and age. Let $(\mu_{\ta})_{\ta \geq 0}$ be the conditional law of the branch length, and $(\nu_{\tal})_{\tal \geq 0}$ the conditional law of the offspring number further conditional on $L=\ell$. By the general multi-type branching construction of \cite{jagers_general_1989} and its translation to the Neveu-Chauvin sample space \cite[Prop.~10.1]{neveu_arbres_1986,chauvin_arbres_1986,taib_branching_1992}, there exists a unique branching kernel on $\Omega$ such that $L$ and $N$ follow these conditional laws, and, given $L$ and $N$, the first-generation subtrees are conditionally independent and distributed according to the same kernel,
\begin{proposition} \label{thm:FirstGenBranching}
    There exists a Markov kernel $(\P_\ta)_{\ta \geq 0}$ on $\Omega$ such that for given $\ta \geq 0$
    \begin{align*}
       L \sim \mu_\ta, \quad N \mid L \sim \nu_{\ta, L}.
    \end{align*}
    Moreover, for any collection of non-negative, measurable functions $(f_k)_{k \in \mathbb N}$ on $\Omega$,
    \begin{align}\label{eq:FirstGenBranching} 
        \Ec{\prod_{k = 1}^{N} f_k \circ \T_k}{L, N}{\ta} = \prod_{k = 1}^{N} \Ex{f_k}{\tau_k, \alpha_k}.
    \end{align}
\end{proposition}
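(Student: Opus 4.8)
The plan is to construct the kernel $(\P_\ta)_{\ta\geq 0}$ recursively over generations and then verify the branching identity \eqref{eq:FirstGenBranching} by an inductive appeal to the construction of \cite{jagers_general_1989,chauvin_arbres_1986}. Concretely, I would first build the one-branch law: given $\ta$, sample $L\sim\mu_\ta$, and conditionally on $L=\ell$ sample $N\sim\nu_{\tal}$. This determines the Neveu tree at generation $1$, namely the branches $1,\dots,N$, and also fixes their marks: each $\tau_k=\tau+L$ for all $k$, while $\alpha_k$ is $\alpha+L$ if $k=1$ (asymmetric case) or $0$ otherwise (both ages are deterministic functions of $\tau,\alpha,L$, so no extra randomness enters here). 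The key structural observation is that, by the recursive nature of branching trees, $\T$ is completely described by the triple $(L,N)$ together with the family of first-generation subtrees $(\T_k)_{k=1}^N$, and the latter are branching trees in their own right born at $(\tau_k,\alpha_k)$.

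Second, I would invoke the general existence theorem for multi-type branching processes on the Ulam--Harris space — \cite[Prop.~10.1]{chauvin_arbres_1986} together with \cite{jagers_general_1989,taib_branching_1992} — with the ``type'' of a branch taken to be its birth-time/age pair $(\tau,\alpha)\in[0,\infty)^2$ and the reproduction kernel being the map $(\ta)\mapsto$ law of $(L,N, (\tau_k,\alpha_k)_{k\leq N})$ just described. That theorem produces a consistent family of laws on the space of marked Neveu trees — which is exactly our $(\Omega,\mathscr F)$ — such that the process is Markov in the type along the genealogy, i.e. conditionally on the first generation (its size $N$ and the types $(\tau_k,\alpha_k)$), the subtrees $\T_1,\dots,\T_N$ are independent with $\T_k$ distributed as $\P_{\tau_k,\alpha_k}$. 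One has to check the measurability hypotheses of that theorem: that $\ta\mapsto\mu_\ta$ and $\tal\mapsto\nu_{\tal}$ are measurable Markov kernels (assumed), that the induced type-transition kernel is measurable (it is, being built from $\mu$, $\nu$ and the deterministic age/birth-time updates), and that the state space $[0,\infty)^2$ is Polish (it is). Uniqueness follows because the law of a branching tree is determined by its finite-generation marginals, which the recursion pins down uniquely.

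Third, with the kernel in hand, the branching identity \eqref{eq:FirstGenBranching} is essentially a restatement of the conditional-independence clause of the construction, but I would spell it out. Since $\tau_k$ and $\alpha_k$ are $\sigma(L,N)$-measurable (in fact $\sigma(L)$-measurable), conditioning on $(L,N)$ is the same as conditioning on $(L,N)$ together with the first-generation types; the construction then gives that under $\Pc{\cdot}{L,N}{\ta}$ the subtrees $\T_1,\dots,\T_N$ are independent with $\T_k\sim\P_{\tau_k,\alpha_k}$, so
\begin{align*}
    \Ec{\prod_{k=1}^{N} f_k\circ\T_k}{L,N}{\ta}
    = \prod_{k=1}^{N}\Ec{f_k\circ\T_k}{L,N}{\ta}
    = \prod_{k=1}^{N}\Ex{f_k}{\tau_k,\alpha_k},
\end{align*}
the last equality because $\T_k$ has law $\P_{\tau_k,\alpha_k}$ under this conditional measure. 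A monotone-class / Dynkin argument upgrades this from indicator functions of cylinder sets (on which the construction directly gives the factorization) to arbitrary non-negative measurable $f_k$, and the usual truncation $N\wedge m$ followed by $m\to\infty$ handles the a priori unbounded product.

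The main obstacle is not any single hard estimate but rather the careful bookkeeping needed to match the abstract multi-type framework of \cite{jagers_general_1989,chauvin_arbres_1986} to the concrete marked-tree sample space $\Omega$ defined here: one must check that the age/birth-time marks genuinely behave as a Markovian ``type'' — in particular that the asymmetric age rule $\alpha_k=\alpha_{\mf k}+L_{\mf k}$ for rank-$1$ offspring is a deterministic measurable function of the mother's type and length, so that no information beyond $(\tau_k,\alpha_k)$ is transmitted to the subtree $\T_k$ — and that the $\sigma$-algebra $\mathscr F=\mathscr B_{[0,\infty)}^{\otimes 2}\otimes\sigma(\gamma,L_x:x\in U)$ is exactly the one under which the cited existence/uniqueness results apply. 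Once this identification is made, the statement follows from \cite[Prop.~10.1]{chauvin_arbres_1986} and \cite{taib_branching_1992} with only routine measure-theoretic verifications.
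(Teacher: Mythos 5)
Your proposal follows essentially the same route as the paper, which offers no separate proof of this proposition but derives it directly from the general multi-type branching construction of \cite{jagers_general_1989} and its translation to the Neveu--Chauvin sample space \cite{chauvin_arbres_1986,taib_branching_1992}, taking the type of a branch to be its birth-time/age pair exactly as you do. Your additional verifications (measurability of the induced type-transition kernel, the deterministic age update as part of the type, and the monotone-class upgrade) are the right routine checks and are consistent with the paper's remark that $\sigma(L,N)=\mathscr F_{1:N}$.
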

\noindent This defines a probability space $(\Omega, \mathscr F, (\P_\ta)_{\ta \geq 0})$ of branching trees, where the identity map $\T$, the random (a)symmetric branching tree, has law $\P_\ta$ given $\ta \geq 0$. 

Proposition \ref{thm:FirstGenBranching} underscores the need to allow arbitrary birth times and ages in constructing (a)symmetric branching trees. Although our main interest naturally lies in (a)symmetric branching trees born at time $\tau = 0$ with age $\alpha = 0$, the general formulation is required to state the first-generation branching property (Eq. \ref{eq:FirstGenBranching}), since the subtrees are then, by construction, not born at time $0$ and may be born with non-zero ages. It also highlights why the asymmetric framework is essential, as only here can the branching property be expressed as a conditional independence between branches, rather than between individuals. In the symmetric case, recording ages at birth is redundant, so we adopt the simplified kernel $(\P_\tau)_{\tau \geq 0} = (\P_{\tau,0})_{\tau \geq 0}$, implicitly conditioning on the root having $\alpha=0$.

Equation \ref{eq:FirstGenBranching} extends naturally to any collection of disjoint subtrees. To formulate this generalization precisely, we must first identify the appropriate $\sigma$-algebra on which to condition. For a stopping line $I \in \mathcal I$, define the $\sigma$-algebra generated by the tree pruned at $I$ as
\begin{align*}
    \mathscr F_I = \sigma(\K_I).
\end{align*}
The family $(\mathscr F_I)_{I \in \mathcal I}$ forms a filtration under $\preceq$: if, for two stopping lines $I, I'$, we write $I \preceq I'$ when all branches of $I'$ have an ancestor in $I$, we have, on $\Omega_{I\cup I'}$, that $\bigcup_{x \in I'} \prog_{x} \subseteq \bigcup_{x \in I} \prog_{x}$ and consequently that $\K_I \subseteq \K_{I'}$. 

A random stopping line is an $\mathcal I$-valued random variable $J$ on $\Omega$ such that $J \subset \gamma$. We call $J$ an \textit{optional line} if $(J \preceq I) \in \mathscr F_I$ for all $I \in \mathcal I$, meaning that $J$ is determined entirely by its non-progeny. For any optional line $J$, we define the associated $\sigma$-algebra as
\begin{align*}
    \mathscr F_J = \{F \in \mathscr F \mid F \cap (J\preceq I) \in \mathscr F_I \quad \text{for all} \;\; I\in \mathcal I \}.
\end{align*}
In the first-generation case of Eq. \ref{eq:FirstGenBranching}, conditioning on $\sigma(L, N)$ is equivalent to conditioning on the non-progeny of the root's immediate offspring, that is, $\sigma(L, N) = \mathscr F_{1:N}$, where $1:N = \{1, ..., N\}$. Following \cite{jagers_general_1989,taib_branching_1992}, we then obtain the generalization of Eq. \ref{eq:FirstGenBranching} to arbitrary optional lines (see Figure \ref{fig:decomp}),
\begin{proposition}[Strong branching property] \label{thm:StrongBranching}
    For an optional line $J$ and given $\ta\geq 0$,
    \begin{align*}
        \Ec{\prod_{x\in J} f_x \circ \T_x}{\mathscr F_J}{\ta} = \prod_{x\in J} \Ex{f_x}{\tau_x, \alpha_x}
    \end{align*}
    for any collection $(f_x)_{x\in U}$ of non-negative measurable functions on $\Omega$.
\end{proposition}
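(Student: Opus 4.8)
The plan is to derive the strong branching property from the first-generation version (Proposition \ref{thm:FirstGenBranching}) by an induction on generations, combined with a monotone-class/approximation argument to pass from deterministic stopping lines to general optional lines. The key structural fact is that $\mathscr{F}_I$ for a deterministic stopping line $I$ can be built up generation by generation: if $I$ has maximal generation $n$, one can write $I = I' \sqcup I''$, where $I''$ consists of those branches of $I$ in generation $n$ whose mothers lie in a stopping line one generation shallower, and then peel off one layer at a time. Concretely, I would first establish the claim for stopping lines contained in a single generation $\{x : \gf x = n\}$, then for finite stopping lines of bounded generation, and finally pass to the general case by taking increasing limits.

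\textbf{Step 1 (single-generation lines via iterated first-generation branching).} Fix $n$ and a stopping line $I$ with $\gf x = n$ for all $x \in I$. The subtree $\T_x$ for $x = x_1 \cdots x_n$ is obtained by first restricting to the first-generation subtree $\T_{x_1}$, then within that to its $x_2$-th subtree, and so on. Applying Eq. \ref{eq:FirstGenBranching} recursively — at the root, then inside each first-generation subtree (which itself has a kernel $\P_{\tau_{x_1}, \alpha_{x_1}}$ by the proposition), and iterating $n$ times — yields
\begin{align*}
    \Ec{\prod_{x \in I} f_x \circ \T_x}{\mathscr{F}_I}{\ta} = \prod_{x \in I} \Ex{f_x}{\tau_x, \alpha_x},
\end{align*}
where at each stage one must check that the conditioning $\sigma$-algebra generated along the way coincides with $\mathscr{F}_I = \sigma(\K_I)$; this is the combinatorial heart of the argument and is where I expect the main bookkeeping effort to go, since one must verify that the pruned tree $\K_I$ records exactly the branch lengths, offspring numbers and topology outside $\bigcup_{x \in I} \prog_x$, and nothing more. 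A clean way to organize this is to note $\K_I$ determines, for each ancestor $y \prec x$ of some $x \in I$, the data $(L_y, N_y)$ and the subtrees $\T_{yk}$ for all $k$ with $yk \not\preceq$ any element of $I$, which is precisely the information exposed by the iterated conditioning.

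\textbf{Step 2 (general finite lines and then arbitrary optional lines).} An arbitrary finite stopping line $I$ decomposes into finitely many pieces, each sitting at a fixed generation; applying Step 1 to the deepest generation and using the tower property reduces the generation count, so an induction on $\max_{x\in I}\gf x$ gives the result for all finite stopping lines, hence for all stopping lines by monotone convergence (truncating $I$ to $I \cap \{x : \gf x \le n,\ x_i \le n\}$ and letting $n \to \infty$, using that the products are of non-negative terms). Finally, for an optional line $J$: on the event $(J = I)$ one has $(J \preceq I) \supseteq (J = I)$ and, crucially, $(J=I) \in \mathscr{F}_I$ by optionality together with $J \subset \gamma$; so for $F \in \mathscr{F}_J$ one writes $F \cap (J = I) \in \mathscr{F}_I$, multiplies the identity of Step 1 by $\ind_{F \cap (J=I)}$, integrates, and sums over the countably many possible values $I \in \mathcal I$ of $J$ — on $(J=I)$ the products $\prod_{x\in J}$ and $\prod_{x\in I}$ agree and $\tau_x,\alpha_x$ match. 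This is the standard passage from deterministic to optional times (cf. \cite{jagers_general_1989,taib_branching_1992}); the only subtlety is checking $\mathcal I$ is countable (it is, as a subset of $2^U$ with $U$ countable — more precisely the possible values of $J\subset\gamma$ form a countable set) and that the defining property of $\mathscr{F}_J$ is exactly what makes the term-by-term integration legitimate.

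\textbf{Main obstacle.} The genuinely delicate point is Step 1's identification of $\sigma$-algebras: showing that iterating Eq. \ref{eq:FirstGenBranching} down to generation $n$ produces conditioning on $\sigma(\K_I)$ and not something strictly larger or smaller. In the asymmetric case this requires care because $\alpha_x$ for rank-$1$ branches depends on the accumulated lengths along the maternal line, so one must confirm that $\K_I$ indeed carries $\tau_x$ and $\alpha_x$ for every $x \in I$ (it does, since the birth time and age of $x$ are functions of data strictly outside $\prog_x$, namely the lengths and ages of proper ancestors). Once this is pinned down, the rest is the tower property, non-negativity, and monotone convergence.
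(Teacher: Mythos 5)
The paper does not actually prove this proposition: it is imported from the literature (``Following \cite{jagers_general_1989,taib_branching_1992}, we then obtain\dots''), so there is no in-paper argument to compare against. Your overall strategy --- iterate the first-generation property (Proposition \ref{thm:FirstGenBranching}) down the generations to handle deterministic stopping lines, then transfer to optional lines --- is exactly the strategy of those references, and Steps~1 and the finite-line induction are sound in outline (granting the $\sigma$-algebra bookkeeping you correctly flag, and noting that you need the full conditional independence of the $\T_k$ given $\mathscr F_{1:N}$, not just the product formula of Eq.~\ref{eq:FirstGenBranching}, in order to refine the conditioning from $\mathscr F_{1:N}$ to $\mathscr F_I$).

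The genuine gap is in your passage from deterministic to optional lines. First, $\mathcal I$ is \emph{not} countable ($2^U$ with $U$ countable is uncountable), and the range of an optional line need not be countable either: a Neveu tree is only locally finite, so it admits infinite antichains, and an optional line such as $\{x\in\gamma \mid L_x>1,\ L_y\le 1\ \forall y\prec x\}$ can take uncountably many values. Your parenthetical repair (``the possible values of $J\subset\gamma$ form a countable set'') is therefore false in general; it only holds when $J$ is almost surely finite. Second, even on the countable-range event your partition requires $(J=I)\in\mathscr F_I$, which does not follow from the definition of optionality: $(J\preceq I)\in\mathscr F_I$ and $(J=I)\subseteq(J\preceq I)$, but $(J=I)$ is not recovered as $(J\preceq I)\cap(I\subseteq J)$ (a line $J\supsetneq I$ containing an extra branch incomparable to all of $I$ also satisfies both), and the complementary union $\bigcup_{I'\precneq I}(J\preceq I')$ is again uncountable. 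The standard fix, which is what the paper's own proof of Lemma \ref{thm:survBranching} alludes to with ``following \cite{jagers_general_1989} linearly,'' avoids enumerating values of $J$ altogether: one checks that each indicator $\ind_{(x\in J)}$ is appropriately measurable, writes
\begin{align*}
\prod_{x\in J} f_x\circ\T_x=\prod_{x\in U}\bigl(1+\ind_{(x\in J)}(f_x\circ\T_x-1)\bigr),
\end{align*}
expands the product over finite subsets of the countable set $U$, and applies the deterministic-line result term by term. Relatedly, your monotone-convergence step for infinite deterministic lines needs either $0\le f_x\le 1$ or $f_x=1$ cofinitely often for the partial products to converge monotonically, and the truncated $\sigma$-algebras $\mathscr F_{I_n}$ \emph{decrease} towards $\mathscr F_I$ (pruning more removes information), so the limit is a reverse-martingale argument rather than the upward one your phrasing suggests. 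These are fixable, but as written Step~2 does not go through.
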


\section{(A)symmetric Sevast'yanov processes} \label{sec:branchingprocesses}
A \textit{branching process} is a stochastic process indexed over $[0, \infty)$ that counts (subsets of) branches in a branching or splitting tree. A random characteristic is a stochastic process $\chi = (\chi(\ell))_{\ell\in \mathbb R}$ on $\Omega$ with values in $\{0, 1\}$ acting as a filter that determines whether the root is counted at a given position along its length. By convention, we let $\chi(\ell) = 0$ for all $\ell < 0$. Using the subtree translation operator, each branch $x \in U$ inherits its own random characteristic, defined on $\Omega_x$ by $\chi_x = \chi \circ \T_x$.

We define the \textit{symmetric} and \textit{asymmetric Sevast'yanov process} counted by a random characteristic $\chi$ as the $\mathbb N_0$-valued process on $\Omega$ given by
\begin{align*}
    Z^\chi(t) = \sum_{x\in \gamma} \chi_x(t - \tau_x)
\end{align*}
for all $t\geq 0$. For any branch $x \in U$, the translation operator induces the branching process of the subtree rooted at $x$, defined on $\Omega_x$, by
\begin{align*}
    Z_x^\chi = Z^\chi \circ \T_x.
\end{align*}
From the fundamental decomposition of branching trees (Eq. \ref{eq:fundaDecomp}), we recognize a foundational recursive structure of branching processes, often referred to as the \textit{Principle of First Generation} \cite{harris_theory_1989},
\begin{lemma}[Principle of first generation] \label{thm:princFirstGen}
    Let $Z^\chi$ be a symmetric or asymmetric Sevast'yanov process counted by a random characteristic $\chi$. Then, for all $t \geq 0$,
    \begin{align*}
        Z^\chi(t) = \chi(t - \tau) + \sum_{k = 1}^N Z_k^\chi(t).
    \end{align*}
\end{lemma}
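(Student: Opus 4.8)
The plan is to unfold the definition of $Z^\chi$ by splitting the sum over $x \in \gamma$ according to whether $x$ is the root or lies in one of the first-generation subtrees, and then to recognize each piece of the resulting sum as the corresponding quantity for a subtree via the translation operator $\T_k$.

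First I would observe that, by conditions \ref{cond:neveuB} and \ref{cond:neveuC} in the definition of a Neveu tree, every non-root branch $x \in \gamma$ has a unique first-generation ancestor: there is exactly one $k \in \{1, \dots, N\}$ such that $x \in k\theta_k$, i.e. $x = ky$ for some $y \in \theta_k$. This gives the partition
\begin{align*}
    \gamma = \{0\} \sqcup \bigsqcup_{k=1}^{N} k\theta_k,
\end{align*}
which is exactly the fundamental decomposition (Eq. \ref{eq:fundaDecomp}) applied to the stopping line $I = 1{:}N$. Substituting this partition into the definition of $Z^\chi(t)$ splits the sum as $\chi_0(t - \tau_0) + \sum_{k=1}^N \sum_{y \in \theta_k} \chi_{ky}(t - \tau_{ky})$. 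Since $\chi_0 = \chi$ and $\tau_0 = \tau$, the first term is $\chi(t-\tau)$, matching the claim.

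Next I would handle the inner double sum. The key identity is that the translation operator is compatible with concatenation of labels: for $y \in \theta_k$ one has $\chi_{ky} = \chi_y \circ \T_k$ and $\tau_{ky} = \tau_y \circ \T_k$, so that $\chi_{ky}(t - \tau_{ky}) = \big(\chi_y(t - \tau_y)\big)\circ \T_k$, where on the right the subtree $\T_k$ supplies its own birth time and age ($\tau_k, \alpha_k$) as the new root data. Summing over $y \in \theta_k$ and using that $\theta_k$ is precisely the underlying Neveu tree of $\T_k$, the inner sum becomes $\big(\sum_{y' \in \gamma} \chi_{y'}(t - \tau_{y'})\big) \circ \T_k = Z^\chi(t) \circ \T_k = Z^\chi_k(t)$, by definition of $Z^\chi_k$. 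Assembling the two contributions yields $Z^\chi(t) = \chi(t-\tau) + \sum_{k=1}^N Z^\chi_k(t)$, which is the assertion; note the identity is purely deterministic (pathwise on $\Omega$), holding for each fixed $t \ge 0$ and each fixed choice of characteristic $\chi$, so no use of $(\P_\ta)$ or the branching property is needed.

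The main obstacle is bookkeeping with the translation operator: one must verify carefully that $\chi_x$, $\tau_x$ and the index set behave functorially under the map $x \mapsto ky$, in particular that $(\phi \circ \T_y) \circ \T_k = \phi \circ \T_{ky}$ for the relevant mappings $\phi \in \{\chi, \tau\}$ and that the (a)symmetric age convention is consistent with composing translations — i.e. $\alpha_{ky}$ computed directly equals $\alpha_y$ evaluated in the tree $\T_k$. This is exactly the content of the remarks following the definition of $\T_x$ in Section \ref{sec:branchingTrees} ("we can translate any mapping $\phi$ defined on $\Omega$ to $\phi_x = \phi \circ \T_x$", with $\tau_x = \tau \circ \T_x$, $\alpha_x = \alpha \circ \T_x$, etc.), so the verification amounts to an induction on generation using the recursive definitions of $\tau_x$ and $\alpha_x$; once that compatibility is in hand, the rest is immediate.
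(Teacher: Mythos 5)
Your proof is correct and follows exactly the route the paper indicates: the paper gives no separate proof of this lemma, remarking only that it follows from the fundamental decomposition (Eq.~\ref{eq:fundaDecomp}) applied to the line $1{:}N$, which is precisely the partition $\gamma = \{0\} \sqcup \bigsqcup_{k=1}^{N} k\theta_k$ you use. Your additional verification that $\T_{ky} = \T_y \circ \T_k$ (including consistency of the birth-time and age recursions, so that $\chi_{ky}(t-\tau_{ky}) = \bigl(\chi_y(t-\tau_y)\bigr)\circ\T_k$) is the right detail to check, and your observation that the identity is purely pathwise is accurate.
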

\noindent That is, the total count at time $t$ is given by the contribution of the root together with the contributions from the subtrees generated by each of its offspring.

The \textit{simple Sevast'yanov process} counts the number of branches (or equivalently individuals) alive at any given time, i.e. it is the process counted with the random characteristic $\chi(u) = \ind_{(0, L]}(u)$. For simplicity, we write $Z = Z^{\chi}$, so that for all $t \geq 0$,
\begin{align} \label{eq:sspDef}
    Z(t) &= \sum_{x \in \gamma} \ind_{(0, L_x]}(t - \tau_x).
\end{align}

For finer analysis of the underlying branching tree, and in particular of the genealogy of the extant branches at some fixed observation time $T > 0$, we will also consider the \textit{reduced Sevast'yanov process}. This is defined by the random characteristic $\chi^{T}(u) = \ind_{(0, L]}(u) \ind_{(Z(T)>0)}$. We denote the reduced Sevast'yanov process by $Z^{T}= Z^{\chi^{T}}$, so that for all $t \geq 0$,
\begin{align*}
    Z^T(t) = \sum_{x \in \gamma} \ind_{(0, L_x]}(t - \tau_x) \ind_{(Z_x(T)>0)}
\end{align*}
which counts those branches alive at time $t$ that leave extant progeny at time $T$.

In contrast to most formulations of branching processes, we count branches as being alive up to and including their death point, while not being alive at the instant of their conception. As a consequence, the sample paths of our branching processes are càglàd (left-continuous with right limits), rather than càdlàg.

Some basic properties of the simple and reduced Sevast'yanov processes follow immediately from their definition,
\begin{lemma} \label{thm:bpProps} \phantom{ }
    \begin{enumerate}
        \item $Z(t) = Z^T(t) = 0$ for all $t \leq \tau$,
        \item $Z^T(t) = 0$ for all $t > T$, 
        \item $Z^T(t) \leq Z(t)$ for all $t \geq 0$,
        \item $Z^T(t)$ is non-decreasing on $[\tau, T]$,
        \item $Z(T) = Z^T(T)$.
    \end{enumerate}
\end{lemma}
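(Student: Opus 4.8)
The plan is to verify each of the five items directly from the definitions of $Z$ and $Z^T$, using the characteristic-function representation together with the structural facts about branch lengths, birth times, and the progeny relation. None of these should require the branching property; they are bookkeeping consequences of the fact that branch lengths are strictly positive, that $\tau_x \geq \tau$ for all $x \in \gamma$, and that being ``alive at time $t$'' for branch $x$ means $\tau_x < t \leq \tau_x + L_x$.

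For item 1, I would note that for $x \in \gamma$ we have $\tau_x \geq \tau_0 = \tau$ by the recursion $\tau_x = \tau_{\mf x} + L_{\mf x}$ and strict positivity of branch lengths, so $t - \tau_x \leq t - \tau \leq 0$ whenever $t \leq \tau$, hence $\ind_{(0, L_x]}(t - \tau_x) = 0$ for every $x$, giving $Z(t) = 0$; since $0 \leq Z^T(t) \leq Z(t)$ (proved as item 3) or directly from the definition of $\chi^T$, also $Z^T(t) = 0$. For item 2, the indicator $\ind_{(Z_x(T) > 0)}$ is needed: if $Z_x(T) = 0$ for all $x$, the sum vanishes; but when $t > T$, I argue that any branch alive at $t$ satisfies $\tau_x < t$, and I want $Z_x(T) > 0$ to force... actually the cleaner route is: $Z_x(T) > 0$ requires some progeny $y \succeq x$ alive at $T$, so $\tau_y < T$; but if $x$ itself is alive at $t > T$ then $\tau_x < t$, and one checks that the term $\ind_{(0,L_x]}(t - \tau_x)\ind_{(Z_x(T)>0)}$ can be nonzero only if $\tau_x < T$ as well — more carefully, $Z_x(T) = \sum_{z \in \theta_x} \ind_{(0, L_{xz}]}(T - \tau_{xz})$, and if this is positive there is $z$ with $\tau_{xz} < T$; since $\tau_x \leq \tau_{xz}$, we get $\tau_x < T < t$, so $x$ is alive at $t$ only if also $T - \tau_x \leq L_x$ fails or holds — the point to nail down is that $x$ alive at $T$ (i.e. $\tau_x < T \leq \tau_x + L_x$) is forced when $x$ alive at $t>T$ and $Z_x(T) > 0$; then $Z^T(t) = \sum_x \ind_{(0,L_x]}(t-\tau_x)\ind_{(Z_x(T)>0)}$ with each surviving summand requiring $t \leq \tau_x + L_x$, but the branch alive at $T$ inside $\theta_x$ need not be $x$, so I should instead directly show: if $Z_x(T) > 0$ and $t > T$, then... this needs a short argument that extant-at-$T$ progeny of a branch alive at $t>T$ cannot exist, which follows because the ancestor's death time $\tau_x + L_x$ bounds $\tau_{xz}$ is false in general — the correct statement is $\tau_x + L_x \le \tau_{xz}$ only for $z$ a strict descendant is wrong too. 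I will instead prove item 2 as: for $t>T$, if the $x$-summand is nonzero then $x$ is alive at $t$ so $\tau_x+L_x\ge t>T$, and $Z_x(T)>0$; but $Z_x(T)>0$ means $x$ or a descendant is alive at $T$, and since $\tau_x+L_x>T\ge\tau_x$ wait we need $\tau_x<T$: if $\tau_x\ge T$ then no branch in $x\theta_x$ can be alive at $T$ as $\tau_{xz}\ge\tau_x\ge T$, contradicting $Z_x(T)>0$; so $\tau_x<T<t\le\tau_x+L_x$, meaning $x$ is alive at $T$, hence $x$ contributes to $Z^T(T)$ — this does not immediately give $Z^T(t)=0$. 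The honest resolution: I will show $Z^T(t) = 0$ for $t > T$ fails to be obvious and in fact the right claim uses that $Z_x(T)>0$ together with $x$ alive at $t>T$ is impossible because a branch alive at both $T$ and $t>T$ would need a descendant alive at $T$ strictly below it in a way that... I expect this item to be the main obstacle and will handle it by the clean observation that the reduced characteristic is designed so $Z^T(t)$ counts branches on the ancestral lines of the time-$T$ extant population, all of which have death time $\le T$; precisely, if $x$ is alive at $t>T$ and has extant progeny at $T$, that progeny $y$ satisfies $\tau_y<T<t\le\tau_x+L_x$ and $y\succeq x$, but then the branch on the path from $x$ to $y$ strictly below $y$ — if $y\ne x$, its mother has death time $\tau_y\le T$, contradicting that line passing through $x$ alive past $T$; if $y=x$ then $\tau_x+L_x\ge t>T\ge$ nothing — so the genuinely careful case analysis on whether the extant branch equals $x$ is what I must write out.

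For items 3, 4, 5: item 3 is immediate since $\ind_{(0,L_x]}(t-\tau_x)\ind_{(Z_x(T)>0)} \le \ind_{(0,L_x]}(t-\tau_x)$ termwise. Item 4, monotonicity on $[\tau, T]$: I would fix $\tau \le s \le t \le T$ and show each branch alive at $s$ with extant time-$T$ progeny is also alive at $t$ with extant time-$T$ progeny — aliveness is not monotone in general (a branch can die between $s$ and $t$), so the real content is that if $x$ is alive at $s$ and $x$ dies before $t$ (i.e. $\tau_x + L_x < t \le T$) while having extant progeny at $T$, then some rank-1 (asymmetric) or some offspring subtree carries that progeny and contributes at time $t$ instead; this is where the Principle of First Generation (Lemma \ref{thm:princFirstGen}) or a direct path-tracing argument enters — I would argue that the set of branches counted by $Z^T$ at time $t$ is exactly the set of ancestors-plus-self of time-$T$ extant branches that are alive at $t$, and this set, as $t$ increases on $[\tau,T]$, only grows because moving forward along an ancestral line always keeps you on that line. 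Item 5 is then the equality case: at $t = T$, $\chi^T(u) = \ind_{(0,L]}(u)\ind_{(Z(T)>0)}$ and the summand for $x$ is $\ind_{(0,L_x]}(T-\tau_x)\ind_{(Z_x(T)>0)}$; I claim $\ind_{(0,L_x]}(T-\tau_x) = 1$ already implies $Z_x(T) > 0$ because then $x$ itself is alive at $T$, so $x$ contributes to $Z_x(T)$, making $\ind_{(Z_x(T)>0)} = 1$ redundant, whence $Z^T(T) = \sum_x \ind_{(0,L_x]}(T-\tau_x) = Z(T)$.

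Overall the proof is a sequence of elementary verifications; I will present them in the order 1, 3, 5, 2, 4, since 3 feeds 1, and 5 and 2 both hinge on the observation that a branch alive at $T$ automatically has $Z_x(T) > 0$, with item 2 then needing the extra step that a branch with extant time-$T$ progeny and death time $> T$ leads to a contradiction via its offspring. The main obstacle, as flagged, is item 2 (and relatedly item 4), where I must argue carefully — likely by an explicit path-tracing along the ancestral line from an extant-at-$T$ branch, or by invoking Lemma \ref{thm:princFirstGen} inductively — that no branch counted by $Z^T$ survives past $T$, since branch aliveness alone is not monotone and the reduction indicator interacts nontrivially with the tree structure.
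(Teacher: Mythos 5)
Your items 1, 3 and 5 are fine, and since the paper offers no proof of this lemma (it is asserted to follow immediately from the definitions) there is no ``paper route'' to compare against. The substantive problem is item 2, and your own case analysis has already found it without naming it: the case you leave hanging (``if $y=x$ then $\tau_x+L_x\ge t>T\ge$ nothing'') is not a gap in your argument but a genuine counterexample to the statement as literally defined. Take the tree consisting of the root alone, with $\tau=0$, $L=10$, $N=0$, and $T=5$: then $Z(5)=1$, so $\ind_{(Z(T)>0)}=1$, and for $t=7$ the single summand is $\ind_{(0,L]}(t-\tau)\,\ind_{(Z(T)>0)}=1$, giving $Z^T(7)=1\neq 0$. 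Any branch that is itself extant at $T$ and survives past $T$ is counted by $\chi^T(u)=\ind_{(0,L]}(u)\ind_{(Z(T)>0)}$ as written. Item 2 (and the consistency of $Z^T$ with $Z\circ\G$ in Proposition \ref{thm:branchingGenealogyProc}, where all genealogical branch lengths are censored at $T$) requires reading the characteristic with the root censored at the observation time, e.g.\ $\chi^T(u)=\ind_{(0,L]}(u)\,\ind_{(u\le T-\tau)}\,\ind_{(Z(T)>0)}$; under that reading item 2 is immediate and your whole $y\neq x$ path-tracing becomes unnecessary. You should state this amended characteristic explicitly rather than try to force the proof through, because no amount of case analysis rescues the literal definition. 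Your $y\neq x$ argument is itself correct, for what it is worth: a strict descendant $y\succ x$ has $\tau_y\ge\tau_x+L_x\ge t>T$, contradicting $\tau_y<T$; only the $y=x$ case survives, and it is exactly the counterexample.

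Item 4 also needs more care than your sketch gives. The set of branches counted by $Z^T$ at time $t$ does \emph{not} ``only grow'': a counted branch dies at $\tau_x+L_x\le T$ and is replaced in the count by those of its children carrying extant progeny, so the set changes; what is monotone is its cardinality. The clean argument: for $t\in(\tau,T]$ every $y\in\zeta^T$ has exactly one ancestor-or-self alive at $t$, since the lifetime intervals $(\tau_z,\tau_z+L_z]$ along the root-to-$y$ path are contiguous and disjoint with union $(\tau,\tau_y+L_y]\supseteq(\tau,T]$; call it $\phi_t(y)$, so that $Z^T(t)=|\phi_t(\zeta^T)|$. If $\phi_t(y)=\phi_t(y')=w$ and $s\in(\tau,t]$, then $\phi_s(y)$ and $\phi_s(y')$ both lie on the root-to-$w$ path and are alive at $s\le\tau_w+L_w$, hence both equal the unique alive-at-$s$ branch on that path. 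So the partition of $\zeta^T$ induced by $\phi_s$ is coarser than that induced by $\phi_t$, and $|\phi_s(\zeta^T)|\le|\phi_t(\zeta^T)|$. Writing item 4 this way also delivers item 5 ($\phi_T$ is the identity on $\zeta^T$) and, with the censored characteristic, item 2, for free.
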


One motivation for introducing the asymmetric branching tree is to provide a more flexible framework for constructing a broad class of Crump-Mode-Jagers (CMJ) processes. The following result justifies this approach by showing that the asymmetric Sevast'yanov process coincides with the the CMJ process counting individuals in the corresponding splitting tree.
\begin{proposition} \label{thm:ASPisCMJ}
    A simple asymmetric Sevast'yanov process is an inhomogeneous CMJ process with lifespan kernel, for each $\tau \geq 0$, given by
    \begin{align*}
        B \mapsto \P_{\tau, 0}\left(\sum_{k:\,1^k \in \gamma} L_{1^k} \in B\right), \quad B \in \mathscr B_{(0, \infty)}
    \end{align*}
    and with offspring process on $(0, L_{X_0}]$, under $\P_{\tau, 0}$, given by
    \begin{align*}
        \ell \mapsto \sum_{\substack{k\geq 0, j\geq 2: \\ 1^kj \in \gamma}} \ind_{(0, \ell]}(\tau_{1^{k}j} - \tau), \quad \ell\geq 0.
    \end{align*}
\end{proposition}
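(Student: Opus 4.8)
The plan is to identify, inside an asymmetric branching tree, the Crump--Mode--Jagers ``individuals'' as the maximal runs of consecutive rank-$1$ branches, and to push the law $\P_{\tau,0}$ forward under this grouping map to the law of the claimed inhomogeneous CMJ process. Concretely, the root individual is the chain $C = \{1^k \in \gamma : k\ge 0\}$ (recall $1^0 = 0$); its lifespan is $\hat\Lambda := \sum_{k:\,1^k\in\gamma} L_{1^k}\, (= L_{X_0})$, and its reproduction process is $\hat\Xi(\ell) := \sum_{k\ge 0,\, j\ge 2\,:\, 1^k j\in\gamma}\ind_{(0,\ell]}(\tau_{1^k j}-\tau)$, since the daughter individuals of the root are precisely the subtrees rooted at the branches $1^k j$ with $\rf(1^k j)=j\ge 2$, born at the branching times $\tau_{1^k j} = \tau + \sum_{i=0}^{k} L_{1^i}$ along the chain. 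With these definitions the prescribed lifespan kernel and offspring process are, by construction, nothing but $\mathrm{Law}_{\P_{\tau,0}}(\hat\Lambda)$ and $\mathrm{Law}_{\P_{\tau,0}}(\hat\Xi)$; hence the genuine content of the proposition is (i) that after the grouping the daughter sub-splitting-trees are conditionally independent given the root individual's data, i.e.\ the CMJ branching property holds, and (ii) that the branch-counting process $Z$ agrees with the CMJ individual-counting process.

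For (i), observe that $J_0 := \{1^k j \in \gamma : k\ge 0,\ j\ge 2\}$ is a random stopping line (its elements are pairwise incomparable), and that $\K_{J_0}$ records $\tau,\alpha$, the chain $C$ with its lengths $(L_{1^k})$ and the branching numbers $(N_{1^k})$; from these $J_0$, $\hat\Lambda$, $\hat\Xi$ and the birth times $(\tau_x)_{x\in J_0}$ are all recoverable, so $J_0$ is optional and $\mathscr F_{J_0}\supseteq \sigma\big(\hat\Lambda,\hat\Xi,(\tau_x)_{x\in J_0}\big)$. Applying the strong branching property (Proposition~\ref{thm:StrongBranching}) to $J_0$ then yields that, conditional on $\mathscr F_{J_0}$, the subtrees $(\T_x)_{x\in J_0}$ are independent with $\T_x\sim\P_{\tau_x,\alpha_x}$, and here $\alpha_x=0$ because $\rf x\ge 2$ for every $x\in J_0$ --- this is exactly where the asymmetric age convention is used, letting rank-$1$ continuations carry the accumulated age $\alpha_{1^k}=\sum_{i<k}L_{1^i}$ while the genuine offspring restart at age zero. (Equivalently, and sidestepping the optionality check, one may iterate Proposition~\ref{thm:FirstGenBranching} along the chain $0,1,11,\dots$, using $\tau_{1^{k+1}}=\tau_{1^k}+L_{1^k}$ and $\alpha_{1^{k+1}}=\alpha_{1^k}+L_{1^k}$, to obtain the law of the chain data $(L_{1^k},N_{1^k})_{k\ge 0}$ and the conditional independence of the daughter subtrees given it; a monotone limit covers a possibly infinite rank-$1$ chain.)

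For (ii), partition $\gamma = C \sqcup \bigsqcup_{x\in J_0} x\theta_x$ as in the fundamental decomposition (Eq.~\ref{eq:fundaDecomp}). The contribution of $C$ to $Z(t)=\sum_{x\in\gamma}\ind_{(0,L_x]}(t-\tau_x)$ equals $\sum_{k:\,1^k\in\gamma}\ind_{(0,L_{1^k}]}(t-\tau_{1^k})=\ind_{(\tau,\,\tau+\hat\Lambda]}(t)$, because the half-open intervals $(\tau_{1^k},\tau_{1^{k+1}}]=(\tau_{1^k},\tau_{1^k}+L_{1^k}]$ tile $(\tau,\tau+\hat\Lambda]$, and the convention that a branch is counted up to and including its death point makes exactly one chain branch alive at each tiling endpoint. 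The contribution of the remaining subtrees is $\sum_{x\in J_0} Z\circ\T_x(t)=\sum_{x\in J_0}Z_x(t)$. Hence $Z(t)=\ind_{(0,\hat\Lambda]}(t-\tau)+\sum_{x\in J_0}Z_x(t)$, which is precisely the principle of first generation (Lemma~\ref{thm:princFirstGen}) read at the level of individuals: the root individual contributes while alive and each daughter individual contributes the individual-count of the splitting subtree it spawns.

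Combining (i) and (ii): under $\P_{\tau,0}$ the process $Z$ satisfies the first-generation recursion characterising the inhomogeneous CMJ process with the stated lifespan kernel and offspring process --- the root term has the right form, the daughter subtrees are conditionally independent given the root individual's data, and, applying the same grouping to each $\T_x$ ($x\in J_0$), which is itself an asymmetric branching tree under $\P_{\tau_x,0}$, each daughter contributes a CMJ process born at $\tau_x$. An induction on the number of generations of individuals, together with uniqueness of the law solving this recursion (a monotone-class/projective-limit argument), identifies $\mathrm{Law}_{\P_{\tau,0}}(Z)$ with the CMJ law, after the routine relabelling of $\{1^k j : j\ge 2\}\cap\gamma$ into the offspring $1,2,\dots$ of the root individual in birth order. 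The main obstacle is step (i): making the correspondence with the splitting-tree/CMJ formalism precise and measure-preserving, and in particular verifying that $J_0$ is optional with $\mathscr F_{J_0}$ rich enough to carry $\hat\Xi$ (whose law depends on the $N_{1^k}$, which $\kappa_{J_0}$ encodes only through the pruned ``slots'' at the chain branches) --- or, on the alternative route, controlling the limit along an infinite rank-$1$ chain --- while keeping the age/birth-time bookkeeping straight so that rank-$1$ branches genuinely act as the continuation of a single individual and rank-$\ge 2$ branches as fresh offspring.
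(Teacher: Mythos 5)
Your step (ii) — regrouping the branches of $\gamma$ by individual, tiling the intervals $(\tau_{1^k},\tau_{1^k}+L_{1^k}]$ into $(\tau,\tau+\sum_k L_{1^k}]$, and reading off the recursion $Z(t)=\ind_{(0,\sum_k L_{1^k}]}(t-\tau)+\sum_{1^kj\in\gamma,\,j\ge 2}Z_{1^kj}(t)$ — is exactly the paper's proof, which then simply identifies this recursion with the definition of an inhomogeneous CMJ process and stops. Your additional step (i), verifying the conditional independence of the daughter subtrees, is extra care the paper leaves implicit; it is welcome, but your primary route through the optional line $J_0=\{1^kj\in\gamma: j\ge 2\}$ has the weakness you yourself flag: $\kappa_{J_0}$ is just the chain $\{1^k\}\cap\gamma$, so it is not clear that $\mathscr F_{J_0}$ carries the offspring numbers $N_{1^k}$ (hence the offspring process $\hat\Xi$), and the optionality of $J_0$ itself is delicate for the same reason. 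Your parenthetical fallback — iterating Proposition~\ref{thm:FirstGenBranching} along the chain $0,1,11,\dots$ with the asymmetric age bookkeeping $\alpha_{1^{k+1}}=\alpha_{1^k}+L_{1^k}$ and $\alpha_{1^kj}=0$ for $j\ge 2$ — is the cleaner way to supply that independence and is fully consistent with the paper's framework.
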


The converse does not hold: not every CMJ process can be represented by an asymmetric branching tree. This limitation comes from its Markovian reproduction structure, where an individual's future life and reproduction depend solely on global time and age at the most recent birth event. Nevertheless, as shown in Proposition \ref{thm:BDGenFun}, the widely applicable class of inhomogeneous age-dependent birth-death processes does fall within this framework. More general CMJ processes can be captured by enlarging the type space, though at the cost of more intricate distributional characterizations.

\subsection{Regularity of the branching processes}
To guarantee that our branching processes are well-defined and admit a unique distributional characterization, we impose sufficient regularity conditions on the branch-length and offspring kernels.
\begin{assumption}\label{ass:regul}
\phantom{ }
\begin{enumerate}[label=(\alph*)]
    \item  The branch-length distributions have uniformly bounded densities, i.e. there exists a locally finite Borel measure $\xi$ on $(0, \infty)$ and a constant $C \geq 0$, such that $\mu_\ta \ll \xi$ for all $\tau, \alpha\geq 0$, and
    \begin{align*}
        \frac{\di \mu_\ta}{\di \xi} \leq C.
    \end{align*}
    \item The offspring means are uniformly bounded, that is, there exists a constant $M\geq 0$ such that for all $\tau, \alpha, \ell \geq 0$,
    \begin{align*}
        m_\tal = \int n \di \nu_\tal(n) \leq M.
    \end{align*}
\end{enumerate}
\end{assumption}

These assumptions encompass most practical cases of branch length and offspring distributions. In particular, by Lebesgue's decomposition theorem \cite[Sec. 31]{billingsley_probability_1986}, the branch-length distributions may be any suitably dominated Borel measure on $(0,\infty)$ with an absolutely continuous part and a pure point part, but without a singular continuous part. The next example provides a concrete construction of such a measure.

\begin{example}
Given $\ta \geq 0$, the conditional branch length distribution $\mu_{\tau, \alpha}$ can be given through its distribution function $G_{\tau,\alpha}\colon (0, \infty) \to [0, 1]$,
\begin{align*}
    G_{\tau, \alpha}(\ell) = \mu_{\tau, \alpha}((0, \ell]) = \Px{L \leq \ell}{\tau, \alpha}, \quad \ell>0.
\end{align*}
Assume that the jump discontinuities of all such distribution functions lie in a locally finite set $D \subset (0, \infty)$, and that the jumps are uniformly bounded by some $C \geq 0$,
\begin{align*}
    G_\ta(x) - G_\ta(x-) \leq C, \quad x\in D.
\end{align*}
Furthermore, assume that between the discontinuities, $G_\ta$ is $C$-Lipschitz continuous, i.e.
\begin{align*}
    |G_\ta(\ell) - G_\ta(\ell')| \leq C |\ell - \ell'|
\end{align*}
for all $\ell, \ell' \in [d_i, d_{i+1})$ where $0 = d_0 < d_1 < \cdots$ are the ordered points of $D$. 

This construction yields a branch-length kernel satisfying Assumption \ref{ass:regul}, with $\xi = m + c\vert_{D}$, where $m$ is the Lebesgue measure on $(0, \infty)$ and $c\vert_D$ is the counting measure on $(0, \infty)$ restricted to $D$. In particular, any kernel of distributions with uniformly bounded Lebesgue densities, or any lattice distribution, satisfies Assumption \ref{ass:regul}.
\end{example}

A straightforward adaptation of the proof of \cite[Thm. 13.1]{harris_theory_1989}, using the above bounds on the lifetime distribution and offspring mean, yields the following regularity result,
\begin{proposition} \label{thm:regul}
    Under Assumption \ref{ass:regul}, both the simple and the reduced Sevast'yanov processes, born at time $\tau$ with age $\alpha$ have finite expectation and are almost surely finite:
    \begin{align*}
        \Ex{Z^{T}(t)}{\ta} \leq \Ex{Z(t)}{\ta} < \infty \quad \text{and} \quad \Px{Z^{T}(t) < \infty}{\ta} = \Px{Z(t) < \infty}{\ta} = 1
    \end{align*}
     for any $t\geq 0$.
\end{proposition}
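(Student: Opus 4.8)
The plan is to bound the simple Sevast'yanov process $Z$, since by Lemma \ref{thm:bpProps}(3) we have $Z^T(t) \leq Z(t)$ for all $t \geq 0$, so finiteness of moments and almost-sure finiteness transfer immediately from $Z$ to $Z^T$. It therefore suffices to show $\Ex{Z(t)}{\ta} < \infty$ and $\Px{Z(t) < \infty}{\ta} = 1$ for all $t \geq 0$ and all $\ta \geq 0$.

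First I would derive an integral (renewal-type) inequality for the mean $m(t; \ta) := \Ex{Z(t)}{\ta}$. Applying the Principle of First Generation (Lemma \ref{thm:princFirstGen}), taking expectations, and conditioning first on $L$ and $N$ and then using the first-generation branching property (Eq. \ref{eq:FirstGenBranching}), one gets
\begin{align*}
    m(t; \ta) = \Px{L \geq t - \tau}{\ta} + \Ex{\sum_{k=1}^{N} m(t; \tau_k, \alpha_k)}{\ta}.
\end{align*}
The root term is bounded by $1$. For the sum, note that conditional on $L = \ell$ all first-generation branches have birth time $\tau + \ell$, and their ages are either $0$ or $\alpha + \ell$ (in the asymmetric case) or all $0$ (symmetric), so $m(t; \tau_k, \alpha_k) \leq M(t; \tau + \ell) := \sup_{\beta \geq 0} m(t; \tau + \ell, \beta)$. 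Using Assumption \ref{ass:regul}(b), $\Ec{N}{L=\ell}{\ta} = m_{\tau,\alpha,\ell} \leq M$, so
\begin{align*}
    m(t; \ta) \leq 1 + M \int_{(0, \infty)} M(t; \tau + \ell) \, \di\mu_{\ta}(\ell) \leq 1 + MC \int_{0}^{t} M(t; \tau + \ell) \, \di\xi(\ell),
\end{align*}
where the last step uses Assumption \ref{ass:regul}(a) together with the fact that branches born after time $t$ contribute $0$, restricting the integral to $\ell \leq t - \tau \leq t$. Taking a supremum over the initial age $\beta$ on the left (the right side does not depend on it) gives a closed inequality $M(t; \tau) \leq 1 + MC \int_0^t M(t; \tau + \ell)\,\di\xi(\ell)$, and since $M(t;\cdot)$ is non-increasing in its time argument one can further bound $M(t; \tau+\ell) \leq M(t; 0) =: M(t)$ inside the integral, or better, iterate the inequality directly.

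The main step is then a Gronwall/iteration argument, exactly as in \cite[Thm.~13.1]{harris_theory_1989}: iterating the inequality $n$ times produces a sum whose $n$-th term is bounded by $(MC)^n \xi([0,t])^n / n!$ times a uniform constant (the convolution powers of a locally finite measure on a bounded interval decay factorially), and the remainder term vanishes because $Z$ restricted to $[0,t]$ only ever involves branches born in $[0,t]$ and the generation-$n$ contribution is controlled by $M^n$ times the probability that the ancestral line survives $n$ births, which the factorial kills. Hence $m(t; \ta) \leq \sum_{n \geq 0} (MC\, \xi([0,t]))^n/n! \cdot(\text{const}) = \text{const}\cdot e^{MC\,\xi([0,t])} < \infty$, using that $\xi$ is locally finite so $\xi([0,t]) < \infty$. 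This gives the first claim. Almost-sure finiteness, $\Px{Z(t) < \infty}{\ta} = 1$, is then immediate: a non-negative random variable with finite expectation is finite almost surely. The corresponding statements for $Z^T$ follow from $Z^T(t) \leq Z(t)$.

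\textbf{Main obstacle.} The only genuine subtlety is making the iteration rigorous in the measure-theoretic setting: one must check that $m(t; \ta)$ is jointly measurable in the parameters (so the integrals make sense), that the supremum over ages $M(t; \tau)$ is well-behaved (finite a priori could be an issue before the bound is established — this is handled by first truncating to generations $\leq n$, which are manifestly finite, proving a uniform bound there, and then letting $n \to \infty$ by monotone convergence), and that the convolution powers of $\xi$ on $[0,t]$ indeed satisfy $\xi^{*n}([0,t]) \leq \xi([0,t])^n/n!$ — this last fact is standard but relies on $\xi$ having no atoms contributing pathologically; in the presence of atoms one should instead bound the number of generations directly and use the offspring-mean bound $M$ to control branching, which is precisely the structure of Harris's original argument. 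Since the excerpt explicitly invokes "a straightforward adaptation of the proof of \cite[Thm.~13.1]{harris_theory_1989}", I would present the above as that adaptation, emphasizing only the two places where inhomogeneity (dependence on $\tau$, $\alpha$) enters — namely the uniform bounds in Assumption \ref{ass:regul} and the supremum over ages — and otherwise refer to Harris for the combinatorial bookkeeping.
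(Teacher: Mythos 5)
Your proposal is correct and follows essentially the same route the paper intends: the paper gives no written proof beyond citing Harris's Theorem 13.1, and your argument (reduce to $Z$ via $Z^T(t)\leq Z(t)$, derive the first-generation renewal inequality for the mean, and close it by iteration/Grönwall using the uniform bounds $m_\tal \leq M$ and $\di\mu_\ta/\di\xi \leq C$ together with a supremum over ages) is precisely that adaptation. The only caveat is your claim that $\xi^{*n}([0,t])\leq \xi([0,t])^n/n!$, which can fail for atomic $\xi$; the correct mechanism --- which you gesture at and which is what Harris actually uses --- is that $\mu_\ta((0,\delta])\leq C\,\xi((0,\delta])$ is uniformly small for small $\delta$, so the generation-$n$ terms decay geometrically against $M^n$.
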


\subsection{Distribution of (a)symmetric Sevast'yanov processes}
The principle of first generation and the branching property shows that our branching processes are sums of conditionally independent sub-processes. The distribution of sums of independent $\mathbb N_0$-valued variables are handled conveniently using generating functions. For all $t \geq 0$, we therefore define the generating function for the simple asymmetric Sevast'yanov process by
\begin{align*}
    F_{\tau, \alpha}(s;t) = \Px{s^{Z(t)}}{\ta} = \sum_{n=0}^\infty  s^n \Px{Z(t) = n}{\ta},\quad s\in [0, 1]
\end{align*}
(with the understanding that $0^0 = 1$), and the generating function for the reduced asymmetric Sevast'yanov process by
\begin{align*}
    F^{T}_{\tau, \alpha}(s;t) = \Px{s^{Z^{T}(t)}}{\ta} = \sum_{n=0}^\infty  s^n \Px{Z^{T}(t) = n}{\ta},\quad s\in [0, 1].
\end{align*}
For the symmetric Sevast'yanov processes, we refer to the simplified generating functions as $F_\tau = F_{\tau, 0}$ and $F_\tau^T = F_{\tau, 0}$. Note that as $s \in [0, 1]$, all generating functions are uniformly bounded by $1$.

Generating functions completely characterize the law of the branching processes. For any $\tau, \alpha, t \geq 0$, the functions $s\mapsto F_\ta(s;t)$ and $s\mapsto F_\ta^T(s; t)$ are smooth on $[0, 1)$, and point probabilities can be computed through differentiation,
\begin{align*}
    &p_\ta^n(t) = \Px{Z(t) = n}{\ta} = \partial_s^n F_\ta(s;t) \big\vert_{s = 0}, \\[1em]
    &p_\ta^{T, n}(t) = \Px{Z^{T}(t) = n}{\ta} = \partial_s^n F^{T}_\ta(s;t) \big\vert_{s = 0}
\end{align*}
for any $n\geq 0$. Proposition \ref{thm:regul} ensures that the first derivative is even continuous on $[0, 1]$ as it has finite first moment.

To derive expressions for the generating functions, we first consider the conditional generating function of the root's offspring number given its branch length,
\begin{align*}
    h_{\tau, \alpha, \ell}(s) = \Pc{s^N}{L = \ell}{\tau, \alpha} = \sum_{n = 0}^\infty s^n \nu_{\tau, \alpha, \ell}(n), \for s\in [0, 1].
\end{align*}
that is, the generating function of the law $\nu_\tal$. In the asymmetric setting, offspring types must be distinguished, so we introduce
\begin{align*}
    \overline N = \ind_{(N \geq 1)}, \quad \text{and} \quad\; \widetilde N = (N - 1) \ind_{(N \geq 1)},
\end{align*}
so that $\overline N$ indicates the presence of a rank $1$ offspring, continuing the life of its mother, while $\widetilde N$ counts the number of rank $\geq 2$ offspring. The law of $(\overline N, \widetilde N)$ is then characterized by the two-dimensional conditional generating function
\begin{align*}
    \tilde h_\tal(r, s) = \Pc{r^{\overline N} s^{\widetilde N}}{L = \ell}{\ta} = \nu_\tal(0) + r \sum_{n = 1}^\infty s^{n-1} \nu_\tal(n), \quad r,s\in [0, 1].
\end{align*}
As $N = \overline N  + \widetilde N$, the two generating functions are related by
\begin{align*}
    \tilde h_\tal(s, s) = h_\tal(s), \quad s\in [0,1].
\end{align*}
Both $h_\tal$ and $\tilde h_\tal$ are smooth on $[0, 1)$, and under Assumption \ref{ass:regul}, their first derivatives are continuous on $[0, 1]$, since $N$ has finite first moment.

We can now derive an integral equation, akin to those of \cite{harris_theory_1989,sevastyanov_age-dependent_1964,pakkanen_unifying_2023,penn_intrinsic_2023}, to which the generating function $F^T_\ta(t)$ of the reduced asymmetric Sevast'yanov process is the unique bounded solution over the domain,
\begin{align*}
    \Delta^T = \{(s, t, \tau, \alpha) \in [0, 1] \times [0, T] \times [0, T) \times [0, \infty) \mid t \geq \tau\}.
\end{align*}
\begin{theorem}\label{thm:GenFunRASP}
    The generating function of the reduced asymmetric Sevast'yanov process born at time $\tau$ with age $\alpha$ is the unique bounded solution on $\Delta^T$ to the integral equation,
    \begin{align} \label{eq:reducedGF}
         F^{T}_\ta(s;t) &= S_\ta(s;t) +\int_{[0, t - \tau)} \tilde h_\tal\left(F^{T}_{\tau + \ell, \alpha + \ell}(s;t), F^{T}_{\tau + \ell, 0}(s;t)\right) \di \mu_\ta(\ell),
    \end{align}    
    where $S_\ta(s;t)$ is given as
    \begin{align*} 
        S_\ta(s;t) &= s^{\ind_{(t > \tau)}} \mu_\ta([t-\tau, \infty)) \\
        &\qquad + \left(1 - s^{\ind_{(t > \tau)}} \right) \int_{[t-\tau, T - \tau)} \tilde h_\tal(p_{\tau+\ell, \alpha+\ell}^0(T), p_{\tau+\ell, 0}^0(T)) \di \mu_\ta(\ell).
    \end{align*}
    The equation simplifies in the reduced symmetric Sevast'yanov process, to
    \begin{align*}
        F^{T}_\tau(s;t) &= S_\tau(s;t)  +\int_{[0, t - \tau)} h_{\tau, \ell}\left(F^{T}_{\tau + \ell}(s;t)\right) \di \mu_\tau(\ell).
    \end{align*}
\end{theorem}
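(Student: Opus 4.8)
The plan is to derive \eqref{eq:reducedGF} by conditioning on the root's first generation and to prove uniqueness by a Gronwall estimate exploiting the finite horizon $T$; the symmetric statement is then the specialization $\alpha\equiv 0$. Fix $(s,t,\tau,\alpha)\in\Delta^T$ and condition first on the branch length $L$, then on the offspring number $N$ given $L$. By the principle of first generation (Lemma~\ref{thm:princFirstGen}) applied to the reduced characteristic $\chi^T$, $Z^T(t)=\chi^T(t-\tau)+\sum_{k=1}^N Z_k^T(t)$, where $Z_k^T(t)=Z^T(t)\circ\T_k$ is the reduced Sevast'yanov process of the subtree rooted at $k$. The conditional expectation $\Ec{s^{Z^T(t)}}{L=\ell}{\ta}$ then splits into three ranges of $\ell=L$. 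If $\ell\ge T-\tau$, the root survives past both $t$ and $T$ while every offspring subtree is born at $\tau+\ell\ge T\ge t$ and contributes nothing at $t$, so $Z^T(t)=\ind_{(t>\tau)}$. If $t-\tau\le\ell<T-\tau$, the root is alive at $t$ but dead before $T$; the offspring subtrees still contribute nothing at $t$, but now $Z^T(t)=\ind_{(t>\tau)}\ind_{(Z(T)>0)}$, and conditioning further on $N$ and applying the branching property (Proposition~\ref{thm:FirstGenBranching}) to the maps $\ind_{(Z(T)=0)}\circ\T_k$ — together with $Z(T)=Z^T(T)$, so $p^0_{\sigma,\beta}(T)$ is the extinction probability — gives $\Ec{s^{Z^T(t)}}{L=\ell}{\ta}=s^{\ind_{(t>\tau)}}+(1-s^{\ind_{(t>\tau)}})\tilde h_\tal\big(p^0_{\tau+\ell,\alpha+\ell}(T),p^0_{\tau+\ell,0}(T)\big)$. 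If $\ell<t-\tau$, the root has died before $t$, so $\chi^T(t-\tau)=0$ and $Z^T(t)=\sum_k Z_k^T(t)$; conditioning on $N$, using the branching property and the asymmetric ages $(\tau_1,\alpha_1)=(\tau+\ell,\alpha+\ell)$, $(\tau_k,\alpha_k)=(\tau+\ell,0)$ for $k\ge 2$, yields $\prod_k F^T_{\tau_k,\alpha_k}(s;t)$, which averages over $N\mid L=\ell$ to $\tilde h_\tal\big(F^T_{\tau+\ell,\alpha+\ell}(s;t),F^T_{\tau+\ell,0}(s;t)\big)$ by the very definition of $\tilde h_\tal$. Integrating the first two ranges against $\mu_\ta$ assembles $S_\ta(s;t)$ (using $t\le T$, so $\mu_\ta([T-\tau,\infty))+\mu_\ta([t-\tau,T-\tau))=\mu_\ta([t-\tau,\infty))$) and the third gives the stated integral term, so $F^T_\ta$ solves \eqref{eq:reducedGF}; it is bounded by $1$ since $s\in[0,1]$ and $Z^T(t)$ is a.s.\ finite by Proposition~\ref{thm:regul}.

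\textbf{Uniqueness.} Let $G$ be any bounded solution on $\Delta^T$ and set $D_\ta(s;t)=F^T_\ta(s;t)-G_\ta(s;t)$. From $m_\tal\le M$ (Assumption~\ref{ass:regul}(b)) one obtains the Lipschitz bound $|\tilde h_\tal(r,s)-\tilde h_\tal(r',s')|\le|r-r'|+M|s-s'|$ on $[0,1]^2$, and $S_\ta$ cancels in the difference of the two equations. Hence, writing $\Phi(\sigma)=\sup\{|D_{\sigma,\beta}(s;t)|:\beta\ge 0,\ s\in[0,1],\ t\in[\sigma,T]\}$, subtracting yields
\[
  \Phi(\tau)\ \le\ (1+M)\,\sup_{\alpha\ge 0}\int_{[0,T-\tau)}\Phi(\tau+\ell)\,\di\mu_\ta(\ell),\qquad \tau\in[0,T).
\]
By Assumption~\ref{ass:regul}(a), $\sup_{\tau,\alpha}\mu_\ta((0,\delta))\le C\,\xi((0,\delta))\to 0$ as $\delta\to 0$, so one may pick a partition $0=t_0<\dots<t_m=T$ with $(1+M)\sup_{\tau,\alpha}\mu_\ta((0,t_i-t_{i-1}))<1$ for all $i$. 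A downward induction on $i$ now gives $\Phi\equiv 0$ on $[t_{i-1},T)$: assuming $\Phi\equiv 0$ on $[t_i,T)$, for $\tau\in[t_{i-1},t_i)$ only $\ell<t_i-\tau$ contributes to the integral, so $\Phi(\tau)\le(1+M)\big(\sup_{[t_{i-1},t_i)}\Phi\big)\sup_{\tau,\alpha}\mu_\ta((0,t_i-t_{i-1}))$, and taking the supremum over $\tau\in[t_{i-1},t_i)$ forces $\sup_{[t_{i-1},t_i)}\Phi=0$. Therefore $\Phi\equiv 0$ and $G=F^T$ on $\Delta^T$. In the symmetric case $\alpha_x\equiv 0$, so every offspring subtree is born with age $0$ and $\prod_k F^T_{\tau_k,0}(s;t)=F^T_{\tau+\ell}(s;t)^N$ averages over $N\mid L=\ell$ to $h_{\tau,\ell}(F^T_{\tau+\ell}(s;t))$ (and likewise in the second range), which yields the displayed simplification; uniqueness is verbatim.

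\textbf{Main obstacle.} The substantive step is the uniqueness argument: choosing the correct supremum norm $\Phi$, identifying $1+M$ as the Lipschitz constant of the two-type generating function $\tilde h_\tal$, and — crucially — exploiting the finite horizon through the partition of $[0,T]$. This is precisely the point where Assumption~\ref{ass:regul}(a) is used beyond mere finiteness, namely to make $\sup_{\tau,\alpha}\mu_\ta((0,\delta))$ small for small $\delta$; without it (e.g.\ if branch-length mass could concentrate arbitrarily near $0$ as $\alpha$ grows) the Gronwall step would not close. The three-way case analysis in the existence step is routine but must be handled carefully at the boundary values $t=\tau$ and $t=T$.
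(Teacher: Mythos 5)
Your proof is correct, and the existence half is essentially the paper's argument: the paper likewise splits $\P_{\tau,\alpha}\left(s^{Z^T(t)}\right)$ according to whether the root has died by time $t$, uses the principle of first generation to write $(Z(T)=0)=(L<T-\tau)\cap\bigcap_k(Z_k(T)=0)$, and applies the branching property to collapse the offspring contributions into $\tilde h_{\tau,\alpha,\ell}$ evaluated at shifted extinction probabilities (giving $S_{\tau,\alpha}$) and at shifted generating functions (giving the integral term); your three-range split of $\ell$ merely computes $S_{\tau,\alpha}$ in two pieces, and the two pieces recombine exactly as you indicate. The uniqueness half takes a genuinely different route. The paper reparametrizes $W(u,\alpha)=F^T_{t-u,\alpha}(s;t)$, bounds the difference $\overline\delta$ of two bounded solutions by $2MC\int_{[0,u)}\overline\delta\,\di\xi$ via Lemma \ref{thm:hLips} and the density bound of Assumption \ref{ass:regul}(a), and then invokes Gr\"onwall's inequality for the Borel measure $\xi$. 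You instead run a backward contraction argument over a finite partition of $[0,T]$, using $\sup_{\tau,\alpha}\mu_{\tau,\alpha}((0,\delta))\le C\,\xi((0,\delta))\to 0$ to make each subinterval a strict contraction. The two arguments have the same strength and consume Assumption \ref{ass:regul} in the same way; yours avoids citing a measure-indexed Gr\"onwall lemma at the cost of partition bookkeeping. Two shared caveats, neither of which I count against you since the paper incurs them too: both arguments implicitly require $\xi((0,\delta))<\infty$ for small $\delta$ (the paper needs this for Gr\"onwall to apply, you need it for $\xi((0,\delta))\to 0$ by continuity from above), and both take suprema over uncountable families ($\overline\delta(u)=\sup_\alpha\delta(u,\alpha)$ in the paper, your $\Phi$) whose measurability is asserted rather than proved --- in your case this is harmless because the final estimate only uses $\Phi$ as a constant upper bound on each subinterval.
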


The proof proceeds by conditioning on whether the root branch is alive at time $t$. If the root survives beyond $t-\tau$, its contribution is given by the term $S_{\tau,\alpha}(s;t)$, which accounts both for the survival of the root itself and for the possibility that all its offspring eventually go extinct before time $T$, i.e. whether the root is counted. If the root dies before $t-\tau$, the process decomposes into independent subtrees rooted at its offspring; the contribution of these is captured by the integral term, where the generating functions of the offspring processes appear inside the offspring generating function $\tilde h_{\tau,\alpha,\ell}$.  

The resulting equation thus reflects the recursive structure of the branching process: the generating function at time $t$ is obtained by combining the survival contribution of the root with the contributions of its offspring subtrees, shifted by their birth times and ages. Finally, uniqueness of the solution is established via an argument based on Grönwall's inequality, ensuring that the integral equation characterizes the generating function completely.

The indicators in the definition of $S_{\tau,\alpha}(s;t)$ are essential as they ensure that the boundary case $t=\tau$ is correctly represented. Since the branching processes are càglàd, they are zero at their birth time and then immediately jump to a possibly positive value. Consequently, the function $t \mapsto F^T_{\tau,\alpha}(s;t)$ cannot be continuous at the boundary $t=\tau$,
\begin{align*}
    F_\ta^T(s; \tau+) = s + (1-s)\, p_\ta^0(T) \neq 1 = F_\ta^T(s; \tau),
\end{align*}
unless the process is almost surely extinct before $T$, or $s=1$. Note, however, that the integral part of the equation does not depend on the boundary value itself, but only on the right-hand limit as $t \downarrow \tau$ and under smoothness assumptions on the branch-length law, this yields smoothness of $t \mapsto F_\ta^T(s;t)$ on $(\tau,T)$,

\begin{proposition} \label{thm:inteqDifferentiable} If for $n\geq 0$, $\mu_\ta$ admits an $n$ times continuously differentiable Lebesgue density for   $\tau \in [0, T]$, $\alpha \geq 0$ and $s \in [0, 1]$, then the  function
    \begin{align*}
        t \mapsto \partial_s^m F_{\ta}^T(s; t)
    \end{align*}
    is continuously differentiable on $(\tau, T)$ for all $m \leq n$.
\end{proposition}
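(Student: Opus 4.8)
The plan is to differentiate the integral equation \eqref{eq:reducedGF} in $t$ and read off a linear Volterra equation for the time derivative. On the interior $(\tau,T)$ the indicator $\ind_{(t>\tau)}$ equals $1$, so \eqref{eq:reducedGF} becomes
\begin{align*}
    F^T_\ta(s;t) = S_\ta(s;t) + \int_{[0,t-\tau)}\tilde h_\tal\bigl(F^T_{\tau+\ell,\alpha+\ell}(s;t),F^T_{\tau+\ell,0}(s;t)\bigr)\di\mu_\ta(\ell),
\end{align*}
with $S_\ta(s;t)=s\,\mu_\ta([t-\tau,\infty))+(1-s)\int_{[t-\tau,T-\tau)}\tilde h_\tal(p_{\tau+\ell,\alpha+\ell}^0(T),p_{\tau+\ell,0}^0(T))\di\mu_\ta(\ell)$, which is affine in $s$. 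Writing $\mu_\ta$ via its density $g_\ta$, two structural facts make $t$-differentiation work. First, inside the integral the variable $t$ occurs only in the observation-time slot of the offspring generating functions (the kernel $\tilde h_\tal$ and the density do not depend on $t$), so the $t$-derivative of the integrand is $\partial_1\tilde h_\tal\,\partial_t F^T_{\tau+\ell,\alpha+\ell}(s;t)+\partial_2\tilde h_\tal\,\partial_t F^T_{\tau+\ell,0}(s;t)$. Second, a branching process vanishes at its birth time (Lemma~\ref{thm:bpProps}), so $F^T_{t,\alpha'}(s;t)=1$, and hence the Leibniz boundary term at $\ell=t-\tau$ collapses to $\tilde h_{\tau,\alpha,t-\tau}(1,1)\,g_\ta(t-\tau)=g_\ta(t-\tau)$. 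Consequently, whenever the offspring time-derivatives exist, $v:=\partial_t F^T_\ta(s;\cdot)$ must solve
\begin{align*}
    v(\tau,\alpha;t)=\partial_t S_\ta(s;t)+g_\ta(t-\tau)+\int_{[0,t-\tau)}\bigl[\partial_1\tilde h_\tal\,v(\tau+\ell,\alpha+\ell;t)+\partial_2\tilde h_\tal\,v(\tau+\ell,0;t)\bigr]g_\ta(\ell)\di\ell,
\end{align*}
a linear Volterra equation whose kernel is bounded, since the partials of $\tilde h_\tal$ on $[0,1]^2$ are dominated by $1$ and by $m_\tal\le M$ (Assumption~\ref{ass:regul}).

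To make this rigorous for $m=0$ I would argue in reverse. The displayed Volterra equation has the same structure as the one whose uniqueness was proved in Theorem~\ref{thm:GenFunRASP}, so a Gr\"{o}nwall argument produces a unique bounded solution $v$, which is in addition continuous in $t$ once $F^T_\ta$, $\tilde h_\tal$, $g_\ta$ and the extinction probabilities are known to be continuous in their arguments (see the obstacle below). I would then identify $v$ with $\partial_t F^T$ by verifying that $t\mapsto F^T_\ta(s;t_0)+\int_{t_0}^{t}v(\tau,\alpha;r)\di r$ again solves \eqref{eq:reducedGF} on $(\tau,T)$ — a direct check in which the integrations $\int_{t_0}^{t}$ and $\int_{[0,r-\tau)}$ are swapped by Fubini and the endpoint term $g_\ta(t-\tau)$ reappears — and then concluding $\partial_t F^T_\ta(s;\cdot)=v$, continuous, by uniqueness and the fundamental theorem of calculus.

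For $m\ge1$ I would proceed by induction on $m$. Differentiating \eqref{eq:reducedGF} $m$ times in $s$ and expanding $\partial_s^m\tilde h_\tal(F^T_{\tau+\ell,\alpha+\ell},F^T_{\tau+\ell,0})$ by the chain rule for higher derivatives (Fa\`{a} di Bruno) shows that $\partial_s^m F^T_\ta(s;\cdot)$ obeys a Volterra equation with the same kernel, forced by $\partial_s^m S_\ta(s;t)$ — which vanishes for $m\ge2$ because $S_\ta$ is affine in $s$ — together with an integral of lower-order terms built from $s$-derivatives of $F^T$ of order $\le m-1$ of the offspring, which are continuously differentiable in $t$ by the induction hypothesis. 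Differentiating once more in $t$: the boundary term at $\ell=t-\tau$ now vanishes, as every surviving summand carries a factor $\partial_s^{j}F^T_{t,\alpha'}(s;t)=0$ with $j\ge1$; the order-$m$ offspring contribution reproduces the Volterra structure; and the $n$-fold differentiability of $g_\ta$ is what keeps all the forcing terms continuous in $t$ as $m$ ranges up to $n$. Solving the equation as for $m=0$ then gives that $t\mapsto\partial_s^m F^T_\ta(s;t)$ is continuously differentiable on $(\tau,T)$.

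I expect the main obstacle to be the continuity input rather than the differentiations themselves. After absorbing the boundary term, the forcing for $v$ equals $(1-s)\,g_\ta(t-\tau)\bigl(1-\tilde h_{\tau,\alpha,t-\tau}(p_{t,\alpha+t-\tau}^0(T),p_{t,0}^0(T))\bigr)$; for this — and for the Fubini identification — one needs joint continuity of $(\tau',\alpha',t)\mapsto F^T_{\tau',\alpha'}(s;t)$, in particular of the extinction probabilities $p_{\tau',\alpha'}^0(T)$ for $\tau'<T$, which I would establish separately from \eqref{eq:reducedGF} by a Gr\"{o}nwall stability estimate under Assumption~\ref{ass:regul}. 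With that continuity and the Volterra well-posedness in hand, the Leibniz differentiations, the Fa\`{a} di Bruno bookkeeping, and the interchanges of integration are all routine.
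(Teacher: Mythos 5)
Your overall strategy --- differentiate the integral equation in $t$, read off a linear Volterra equation for $\partial_t F^T_\ta(s;\cdot)$ with bounded kernel, solve it by a fixed-point/Gr\"onwall argument, and identify the solution with the derivative --- is essentially the paper's proof (the paper first establishes continuity of $t\mapsto F^T_\ta(s;t)$ by a Banach fixed-point bootstrap over intervals of length $\delta$ with $2MC\delta<1$, then shows the difference quotient converges to the solution of exactly such a Volterra equation). However, there is a concrete error in your Leibniz boundary term, and it propagates into the forcing of your Volterra equation and into your treatment of $m\geq 1$.

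The limit of the integrand at the moving endpoint $\ell\uparrow t-\tau$ involves $\lim_{\tau'\uparrow t}F^T_{\tau',\alpha'}(s;t)$, i.e.\ the generating function of a subtree born immediately before the evaluation time $t$. Because the process is c\`agl\`ad, $t\mapsto F^T_{\tau',\alpha'}(s;t)$ is discontinuous at $t=\tau'$: the paper notes explicitly that $F^T_\ta(s;\tau+)=s+(1-s)p^0_\ta(T)\neq 1=F^T_\ta(s;\tau)$, and that the integral term sees only the right-hand limit. One checks from the integral equation that $\lim_{\tau'\uparrow t}F^T_{\tau',\alpha'}(s;t)=s+(1-s)p^0_{t,\alpha'}(T)$, so the boundary term is $\tilde h_{\tau,\alpha,t-\tau}\bigl(s+(1-s)p^0_{t,\alpha+t-\tau}(T),\,s+(1-s)p^0_{t,0}(T)\bigr)\,g_\ta(t-\tau)$, not $\tilde h_{\tau,\alpha,t-\tau}(1,1)\,g_\ta(t-\tau)=g_\ta(t-\tau)$. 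Consequently your claimed forcing $(1-s)\,g_\ta(t-\tau)\bigl(1-\tilde h_{\tau,\alpha,t-\tau}(p^0_{t,\alpha+t-\tau}(T),p^0_{t,0}(T))\bigr)$ is incorrect unless $\tilde h$ is affine, the $v$ your equation determines is not $\partial_t F^T$, and your own verification step ($F^T(t_0)+\int_{t_0}^t v$ solving the original equation) would fail. The same slip recurs for $m\geq 1$: the endpoint contribution does \emph{not} vanish for $m=1$, since the surviving chain-rule summand carries the factor $\lim_{\tau'\uparrow t}\partial_s F^T_{\tau',\alpha'}(s;t)=1-p^0_{t,\alpha'}(T)\neq 0$ in general (it does vanish for $m\geq 2$). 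With the boundary limits corrected, the remainder of your plan --- Volterra well-posedness, Gr\"onwall uniqueness, Fa\`a di Bruno bookkeeping for higher $s$-derivatives, and the continuity input you rightly flag --- matches the paper's argument.
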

Applying the Leibniz integral rule to Eq. \ref{eq:reducedGF} shows that these derivatives are themselves unique bounded solutions of similar integral equations.

Since $Z^T(t) = Z(t)$ (Prop. \ref{thm:bpProps}.3), we obtain the generating function of the simple (a)symmetric Sevast'yanov process as a corollary. Unlike the reduced process, $Z(t)$ is not killed after time $T$, so its generating function is studied on the enlarged domain
\begin{align*}
    \Delta = \{(s, t, \tau, \alpha) \in [0, 1] \times [0, \infty) \times [0, \infty)^2 \mid \tau \leq t\}.
\end{align*}

\begin{corollary} \label{thm:GenFunASP}
    The generating function of the simple asymmetric Sevast'yanov process born at time $\tau$ with age $\alpha$ is the unique bounded solution on $\Delta$ to the integral equation,
    \begin{align*}
        F_\ta(s;t) = s^{\ind_{(t > \tau)}} \mu_\ta([t - \tau, \infty)) + \int_{[0, t - \tau)}  \tilde h_\tal\left(F_{\tau + \ell, \alpha + \ell}(s;t), F_{\tau + \ell, 0}(s;t)\right) \di \mu_\ta(\ell).
    \end{align*}
    The equation simplifies for the simple symmetric Sevast'yanov process, to
    \begin{align*}
        F_\tau(s; t) = s^{\ind_{(t > \tau)}} \mu_\tau([t-\tau, \infty)) + \int_{[0, t - \tau)}  h_{\tau, \ell}(F_{\tau + \ell}(s; t)) \di \mu_\tau(\ell).
    \end{align*}
\end{corollary}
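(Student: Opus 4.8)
The plan is to obtain the corollary directly from Theorem \ref{thm:GenFunRASP} by exploiting the identity $Z^T(t) = Z(t)$ from Lemma \ref{thm:bpProps}(3) together with a limiting argument in the observation time $T$. Fix $(s,t,\tau,\alpha) \in \Delta$ and choose any $T \ge t$. Then $(s,t,\tau,\alpha) \in \Delta^T$ (after also requiring $\tau < T$, which costs nothing since we will let $T \to \infty$), and since $Z^T(t) = Z(t)$ pointwise, we have $F^T_{\tau,\alpha}(s;t) = F_{\tau,\alpha}(s;t)$ for every such $T$. The strategy is then to show that each term of the integral equation \eqref{eq:reducedGF} converges, as $T \to \infty$, to the corresponding term of the asserted equation for $F_{\tau,\alpha}$, and that the resulting equation indeed has a unique bounded solution on all of $\Delta$.

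First I would analyze $S_{\tau,\alpha}(s;t)$ as $T \to \infty$. The first summand $s^{\ind_{(t>\tau)}}\mu_{\tau,\alpha}([t-\tau,\infty))$ is independent of $T$ and is exactly the first term of the target equation. For the second summand, the range of integration $[t-\tau, T-\tau)$ increases to $[t-\tau,\infty)$, but the integrand involves $p^0_{\tau+\ell,\alpha+\ell}(T)$ and $p^0_{\tau+\ell,0}(T)$, the extinction-by-$T$ probabilities; as $T \to \infty$ with the birth time held fixed, these increase to the ultimate extinction probabilities, and the prefactor $(1 - s^{\ind_{(t>\tau)}})$ is bounded. I would argue that this second summand does not simply vanish but instead gets absorbed: rewriting $\tilde h_{\tau,\alpha,\ell}(\cdot,\cdot)$ evaluated at $(p^0_{\tau+\ell,\alpha+\ell}(T), p^0_{\tau+\ell,0}(T))$ and comparing with $\tilde h_{\tau,\alpha,\ell}(F^T_{\tau+\ell,\alpha+\ell}(s;t), F^T_{\tau+\ell,0}(s;t))$ on the overlapping region, one sees that for $\ell \ge t-\tau$ the subtree processes $Z_k(t)$ have had no time to produce a count (the offspring are born after $t$), so on $[t-\tau, T-\tau)$ the integrand of the main integral reduces to precisely $\tilde h$ at the extinction probabilities times the appropriate power of $s$. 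Thus the $S$-term plus the tail $\ell \in [t-\tau, T-\tau)$ of the integral combine, and passing $T \to \infty$ the integral over $[0, t-\tau)$ is unchanged (its integrand does not depend on $T$, since for $\ell < t-\tau$ the subtree generating functions $F^T_{\tau+\ell,\bullet}(s;t)$ equal $F_{\tau+\ell,\bullet}(s;t)$ by the same $Z^T = Z$ identity), leaving exactly the asserted equation. An alternative, cleaner route is to bypass the limit entirely: repeat the first-generation conditioning argument used to prove Theorem \ref{thm:GenFunRASP}, but now for the non-killed process $Z$, where the root is counted iff it is alive (the $\ind_{(Z(T)>0)}$ factor is absent), so no extinction-probability bookkeeping is needed and one arrives directly at the stated equation. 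I would present this direct derivation as the main argument, using the Principle of First Generation (Lemma \ref{thm:princFirstGen}), the first-generation branching property \eqref{eq:FirstGenBranching}, and conditioning on $(L,N)$ and on whether $L \ge t-\tau$.

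For the uniqueness claim on $\Delta$, I would reuse the Grönwall argument from Theorem \ref{thm:GenFunRASP} essentially verbatim. Given two bounded solutions $F, \tilde F$ on $\Delta$, fix $\tau,\alpha,s$ and set $g(t) = \sup_{\sigma \le t}\sup_{\beta}|F_{\sigma,\beta}(s;\cdot) - \tilde F_{\sigma,\beta}(s;\cdot)|$ restricted appropriately; subtracting the two integral equations, the $s^{\ind}\mu([t-\tau,\infty))$ terms cancel, and Lipschitz continuity of $\tilde h_{\tau,\alpha,\ell}$ in its two arguments (with constant controlled by the offspring mean, hence by $M$ from Assumption \ref{ass:regul}(b)) together with the density bound $C$ from Assumption \ref{ass:regul}(a) gives $|F_{\tau,\alpha}(s;t) - \tilde F_{\tau,\alpha}(s;t)| \le 2M C \int_0^{t-\tau} g(\tau + \ell)\,\dd\ell$, whence $g \equiv 0$ by Grönwall on any finite time horizon, and then on all of $[0,\infty)$ since the horizon was arbitrary. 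The symmetric case follows by specializing $\alpha = 0$ and using $\tilde h_{\tau,\ell}(s,s) = h_{\tau,\ell}(s)$, noting that in the symmetric tree every subtree is born with age $0$ so both arguments of $\tilde h$ coincide and collapse to the one-dimensional generating function. The main obstacle I anticipate is the careful handling of the boundary indicator $\ind_{(t>\tau)}$ and the càglàd convention — one must check that the $\ell = 0$ endpoint is genuinely excluded from the integral and that the "root alive at $t$" event is $\{L \ge t - \tau\}$ (closed at the left endpoint), so that the split between the $\mu_{\tau,\alpha}([t-\tau,\infty))$ term and the integral over $[0,t-\tau)$ is exactly right; everything else is a routine adaptation of the reduced-process proof.
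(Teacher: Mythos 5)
Your first (and most developed) route rests on a false identity. You write that for any $T \ge t$ one has $Z^T(t) = Z(t)$ pointwise, citing Lemma \ref{thm:bpProps}(3); but item (3) is the inequality $Z^T(t) \le Z(t)$, and for $T > t$ it is strict whenever some branch alive at $t$ fails to leave extant progeny at $T$. The correct identity is item (5), $Z(T) = Z^T(T)$, i.e.\ equality holds only at the observation time itself. Consequently $F^T_{\tau,\alpha}(s;t) \ne F_{\tau,\alpha}(s;t)$ for $T>t$, and the whole $T\to\infty$ bookkeeping (absorbing the tail of $S_{\tau,\alpha}$ into the main integral, arguing that the integrand on $[0,t-\tau)$ is $T$-independent) is built on sand — in particular the claim that $F^T_{\tau+\ell,\bullet}(s;t) = F_{\tau+\ell,\bullet}(s;t)$ for $\ell < t-\tau$ is false for the same reason. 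The move the paper intends is the opposite specialization: take $T = t$ in Theorem \ref{thm:GenFunRASP}. Then $Z^t(t) = Z(t)$ by item (5), the integral in $S_{\tau,\alpha}(s;t)$ over $[t-\tau, T-\tau) = \emptyset$ vanishes, $S_{\tau,\alpha}(s;t)$ collapses to $s^{\ind_{(t>\tau)}}\mu_{\tau,\alpha}([t-\tau,\infty))$, and the integrand on $[0,t-\tau)$ involves $F^t_{\tau+\ell,\bullet}(s;t) = F_{\tau+\ell,\bullet}(s;t)$, giving the stated equation in one line. No limit is needed.

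Your designated ``main argument'' — rerunning the first-generation conditioning of Theorem \ref{thm:GenFunRASP} for the non-killed process, with the $\ind_{(Z(T)>0)}$ factor absent so that the root contributes $s^{\ind_{(t>\tau)}}$ on $\{L \ge t-\tau\}$ and the offspring term is unchanged — is correct and would carry the proof, as would your Gr\"onwall uniqueness argument, with one caveat: the comparison integral must be taken against the dominating measure $\xi$ of Assumption \ref{ass:regul}(a) (which may have atoms), not against Lebesgue measure as your displayed bound $\int_0^{t-\tau} g(\tau+\ell)\,\dd\ell$ suggests; the paper's version of Gr\"onwall for locally finite Borel measures then applies after the reparametrization $W(u,\alpha) = F_{t-u,\alpha}(s;t)$ at fixed $t$. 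Your handling of the boundary indicator and of the symmetric specialization via $\tilde h_{\tau,\ell}(s,s) = h_{\tau,\ell}(s)$ is right. So the proposal is salvageable, but as written its lead argument should be deleted or replaced by the $T=t$ substitution.
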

The structure of the integral equation is the same as for the reduced (a)symmetric Sevast'yanov process, but without the need to account for possible extinction of the root's offspring. In the simple process the root is always counted, which makes the initial term simpler.

As an immediate consequence, the finite-time extinction probability of the simple (a)symmetric Sevast'yanov process also satisfies an integral equation. It is obtained by evaluating the generating function $F_{\tau,\alpha}(s;t)$ at $s=0$, and uniqueness follows by the same argument as in Theorem \ref{thm:GenFunRASP}.
\begin{corollary} \label{thm:extinctionProb}
    The finite time extinction probability of the asymmetric Sevast'yanov process born at time $\tau$ with age $\alpha$ is the unique bounded solution on $\Delta$ to the integral equation,
    \begin{align*}
        p_\ta^0(t) = \ind_{(t=\tau)} +  \int_{[0, t-\tau)} \tilde h_\tal\left(p^0_{\tau+\ell, \alpha+\ell}(t), p^0_{\tau + \ell, 0}(t)\right) \di \mu_\ta(\ell).
    \end{align*}
    The equation simplifies for the symmetric Sevast'yanov process, to
    \begin{align*}
        p_\tau^0(t) =  \ind_{(t = \tau)} + \int_{[0, t-\tau)} h_{\tau, \ell}\left(p^0_{\tau+\ell}(t)\right) \di \mu_\tau(\ell).
    \end{align*}
\end{corollary}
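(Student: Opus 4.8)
The plan is to obtain the equation by specializing Corollary \ref{thm:GenFunASP} to $s = 0$, and to obtain uniqueness by repeating the Gr\"onwall argument underlying Theorem \ref{thm:GenFunRASP}. Since $Z(t)$ is almost surely finite by Proposition \ref{thm:regul}, the finite-time extinction probability is recovered from the generating function as $p_\ta^0(t) = \Px{0^{Z(t)}}{\ta} = F_\ta(0;t)$, with the convention $0^0 = 1$. So the strategy is simply to track what each term of the integral equation of Corollary \ref{thm:GenFunASP} becomes when $s$ is set to $0$.

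For the substitution I would use the two elementary facts $0^{\ind_{(t > \tau)}} = \ind_{(t = \tau)}$ and, since $\mu_\ta$ is a probability measure, $\mu_\ta([t - \tau, \infty)) = 1$ whenever $t = \tau$; together these collapse the leading term $s^{\ind_{(t>\tau)}} \mu_\ta([t-\tau,\infty))$ to $\ind_{(t=\tau)}$ at $s = 0$. Inside the integral, $F_{\tau+\ell, \alpha+\ell}(0;t) = p^0_{\tau+\ell, \alpha+\ell}(t)$ and $F_{\tau+\ell, 0}(0;t) = p^0_{\tau+\ell, 0}(t)$, so the integrand becomes $\tilde h_\tal(p^0_{\tau+\ell, \alpha+\ell}(t), p^0_{\tau+\ell, 0}(t))$, and Corollary \ref{thm:GenFunASP} then yields exactly the stated asymmetric equation. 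The symmetric equation follows in the same way from the symmetric version of Corollary \ref{thm:GenFunASP} upon recalling $h_{\tau,\ell}(s) = \tilde h_{\tau,\ell}(s,s)$.

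For uniqueness, suppose $g$ and $g'$ are two bounded functions on the slice $\{(t,\tau,\alpha) : 0 \le \tau \le t,\ \alpha \ge 0\}$ solving the asymmetric equation. The generating function $\tilde h_\tal$ is Lipschitz on $[0,1]^2$: its partial derivative in the first argument is bounded by $1$ and the one in the second argument by $m_\tal \le M$ (Assumption \ref{ass:regul}(b)), so $|\tilde h_\tal(r,s) - \tilde h_\tal(r',s')| \le |r-r'| + M|s-s'|$. Subtracting the two equations and invoking Assumption \ref{ass:regul}(a) to dominate $\mu_\ta$ by $C\xi$, one bounds $\sup_{\alpha \ge 0} |g_\ta(t) - g'_\ta(t)|$ by a $(1+M)C$-weighted $\xi$-integral of the same quantity evaluated at later birth times $\tau + \ell < t$; for fixed $t$, Gr\"onwall's inequality forces this supremum to vanish, exactly as in the proof of Theorem \ref{thm:GenFunRASP}. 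Uniqueness for the symmetric equation is identical, using $M$-Lipschitz continuity of $h_{\tau,\ell}$ in place of the two-variable bound. I expect essentially no obstacle here beyond bookkeeping: both the existence of $F_\ta$ and the Gr\"onwall uniqueness mechanism are inherited from Corollary \ref{thm:GenFunASP} and Theorem \ref{thm:GenFunRASP}. The only point demanding care is the boundary value $t = \tau$ — checking that the $\ind_{(t=\tau)}$ term emerges correctly from $s^{\ind_{(t>\tau)}} \mu_\ta([t-\tau,\infty))$ at $s = 0$ — and verifying that restricting to $s = 0$ keeps us inside a region of $\Delta$ on which the uniqueness argument of Theorem \ref{thm:GenFunRASP} applies verbatim.
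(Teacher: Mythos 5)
Your proposal is correct and follows exactly the route the paper intends: evaluate the integral equation of Corollary \ref{thm:GenFunASP} at $s=0$ (noting $0^{\ind_{(t>\tau)}}\mu_\ta([t-\tau,\infty)) = \ind_{(t=\tau)}$ and $F_{\cdot}(0;t)=p^0_{\cdot}(t)$), and reuse the Gr\"onwall uniqueness argument of Theorem \ref{thm:GenFunRASP}. Your boundary-case check at $t=\tau$ and the Lipschitz bound on $\tilde h_\tal$ match the paper's Lemma \ref{thm:hLips} in substance, so there is nothing to add.
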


For $t \in (\tau,T]$, the reduced process is zero at time $t$ if and only if the simple process is zero at time $T$. Indeed, any branch alive at $T$ must descend from some branch alive at $t$, and conversely if $Z(T)=0$ then all possible subtrees are extinct at $T$, so $Z^T(t)=0$. Consequently, the finite-time extinction probability of the reduced process at any time $t \in [\tau, T]$ coincides with that of the simple process at time $T$,
\begin{align*}
    p_\ta^{T, 0}(t) = p_\ta^0(T), \quad t \in [\tau, T].
\end{align*}

\subsection{Age-dependent birth-death processes}
The age-dependent birth-death process is a natural and widely studied subclass of CMJ processes \cite{doney_age-dependent_1972,harris_theory_1989,crump_general_1968}. In this section we show how we can use the asymmetric branching tree to model a birth-death process with both time and age dependent rates. Restricting the branch-length and offspring kernels to depend only on birth time, we recover the classical generating function of the inhomogeneous birth-death process \cite{kendall_generalized_1948}.

Let $\beta(t,a)$ and $\delta(t,a)$ denote, respectively, the birth and death rates of an individual of age $a \geq 0$ at time $t \geq 0$. To match our asymmetric branching framework, we set for a branch born at time $\tau$ with age $\alpha$,  
\begin{align*}
    \beta_\ta(\ell) = \beta(\tau + \ell, \alpha+ \ell) \quad \text{and} \quad \delta_\ta(\ell) = \delta(\tau + \ell, \alpha+ \ell)
\end{align*}
so that $\beta_{\tau,\alpha}(\ell)$ and $\delta_{\tau,\alpha}(\ell)$ describe the rates along the branch length $\ell>0$. Define the total event rate $\rho(t, a) = \beta(t,a) + \delta(t, a)$, the total event rate along a branch length $\rho_\ta(\ell) = \rho(\tau + \ell, \alpha+ \ell)$, and the cumulative event rate along a branch length
\begin{align*}
    R_{\tau,\alpha}(\ell) = \int_0^\ell \rho_{\tau,\alpha}(s)\,\di s.
\end{align*}

We parametrize the birth-death process as an asymmetric branching tree using a competing-risks setup: for a branch born at time $\tau$ with age $\alpha$, the branch length is distributed as the time to the first event (birth or death). Its distribution function is $G_{\tau,\alpha}(\ell) = 1 - e^{-R_{\tau,\alpha}(\ell)}$, which is absolutely continuous with Lebesgue density
\begin{align*}
    g_{\tau,\alpha}(\ell) = \rho_{\tau,\alpha}(\ell)\, e^{-R_{\tau,\alpha}(\ell)},
\end{align*}
for $\ell > 0$.

In classical birth--death processes, individuals produce at most one offspring at a time. Thus the offspring distribution is supported on $\{0,2\}$: at an event, either the parent dies or it gives birth to one new individual and continues living. Conditional probabilities are given by the relative rates,  
\begin{align*}
    \nu_\tal(0) = \frac{\delta_\ta(\ell)}{\rho_\ta(\ell)} \quad \text{and} \quad \nu_\tal(2) = \frac{\beta_\ta(\ell)}{\rho_\ta(\ell)}.
\end{align*}
The corresponding asymmetric offspring generating function is therefore
\begin{align*}
    \tilde h_{\tau,\alpha,\ell}(r,s) 
    &= \nu_{\tau,\alpha,\ell}(0) + rs \,\nu_{\tau,\alpha,\ell}(2) \\
    &= \frac{\delta_{\tau,\alpha}(\ell) + rs\,\beta_{\tau,\alpha}(\ell)}{\rho_{\tau,\alpha}(\ell)}.
\end{align*}
This parametrization shows that age-dependent birth--death processes, including their inhomogeneous variants, arise as a special case of the asymmetric Sevast'yanov framework. If the birth and death rates are independent of age, the process can equivalently be modeled by a symmetric branching tree. In this case, one recovers Kendall's \cite{kendall_generalized_1948} explicit generating function for the inhomogeneous birth-death process,
\begin{proposition}\label{thm:BDGenFun}
    If $\beta(t, a) \equiv \beta(t)$ and $\delta(t, a) \equiv \delta(t)$ are continuously differentiable, the symmetric Sevast'yanov process parametrized by these rates, born at time $0$ has generating function
    \begin{align*}
        F_0(s; t) = \frac{A(t) + (1 - A(t)- B(t)) s}{1 - B(t) s}
    \end{align*}
    for $s\in [0, 1]$ and $t > 0$, where the functions $A$ and $B$ given by
    \begin{align*}
        A(t) = 1 - \frac{e^{-D(t)}}{1 + e^{-D(t)} \int_0^t \beta(u)e^{D(u)} \di u}, \quad B(t) = 1 - \frac{1}{1 + e^{-D(t)} \int_0^t \beta(u)e^{D(u)} \di u}
    \end{align*}
    with $D(t) = \int_0^t \delta(u) - \beta(u) \di u$ for all $t\geq 0$.
\end{proposition}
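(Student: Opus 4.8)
The plan is to specialize the symmetric integral equation of Corollary~\ref{thm:GenFunASP} to the birth--death parametrization, reduce it to the classical Riccati ODE in the birth time, and solve that ODE explicitly. Fix $s \in [0,1]$ and $t > 0$ and write $\phi(\tau) = F_\tau(s;t)$ for $\tau \in [0,t]$. Since $\beta,\delta$ are $C^1$ they are bounded on $[0,t]$, the only values entering the equation for $F_\tau(s;t)$ with $\tau \le t$, and the offspring means satisfy $m_{\tau,0,\ell} = 2\,\nu_{\tau,0,\ell}(2) \le 2$; so Corollary~\ref{thm:GenFunASP} applies (extending $\beta,\delta$ boundedly past $t$ if one wants Assumption~\ref{ass:regul} globally). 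Inserting $\mu_\tau([t-\tau,\infty)) = e^{-\int_\tau^t \rho(v)\,\di v}$, with $\rho = \beta+\delta$, and $h_{\tau,\ell}(r) = (\delta(\tau+\ell) + \beta(\tau+\ell)\,r^2)/\rho(\tau+\ell)$ into the symmetric equation, then substituting $u = \tau+\ell$, turns it into
\begin{align*}
    \phi(\tau) = s\, e^{-\int_\tau^t \rho(v)\,\di v} + \int_\tau^t \bigl(\delta(u) + \beta(u)\,\phi(u)^2\bigr)\, e^{-\int_\tau^u \rho(v)\,\di v}\,\di u, \qquad \tau < t,
\end{align*}
with $\phi(t) = 1$.

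Next I would bootstrap regularity so that this equation may be differentiated in $\tau$. Any bounded measurable solution $\phi$ is automatically continuous on $[0,t)$, since the right-hand side depends continuously on the lower limit; letting $\tau \uparrow t$ shows $\phi(\tau) \to s$, so $\phi$ extends continuously to $[0,t]$ with left limit $\phi(t-) = s \neq 1 = \phi(t)$, the expected càglàd jump at the birth time. With $\phi$ continuous the integrand is jointly continuous, so by the fundamental theorem of calculus and the Leibniz rule $\phi \in C^1([0,t))$ and, using $\rho = \beta+\delta$,
\begin{align*}
    \phi'(\tau) = \rho(\tau)\,\phi(\tau) - \delta(\tau) - \beta(\tau)\,\phi(\tau)^2 = \bigl(\phi(\tau) - 1\bigr)\bigl(\delta(\tau) - \beta(\tau)\,\phi(\tau)\bigr),
\end{align*}
with terminal condition $\phi(t-) = s$. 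One could instead bypass this step by exhibiting the claimed Möbius function as a bounded solution of the integral equation and invoking the uniqueness clause of Corollary~\ref{thm:GenFunASP}.

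It then remains to solve this inhomogeneous birth--death Riccati equation backward from $\tau = t$. For $s < 1$ the root survives to $t$ with probability at least $e^{-\int_\tau^t \rho} > 0$, so $\phi < 1$ on $[0,t)$ and $w = \phi - 1$ is nonvanishing there; $w$ solves the Bernoulli equation $w' = (\delta-\beta)\,w - \beta\, w^2$, hence $v = 1/w$ solves the linear equation $v' = -(\delta-\beta)\,v + \beta$, i.e.\ $\bigl(e^{D}v\bigr)' = \beta\, e^{D}$ with $D(\tau) = \int_0^\tau(\delta-\beta)$. Integrating over $[0,t]$ with terminal value $v(t-) = 1/(s-1)$ and $D(0) = 0$ gives
\begin{align*}
    F_0(s;t) = \phi(0) = 1 + \frac{1}{v(0)} = 1 + \frac{(s-1)\, e^{-D(t)}}{1 - (s-1)\, e^{-D(t)} \int_0^t \beta(u)\, e^{D(u)}\,\di u},
\end{align*}
and an elementary rearrangement puts this into the stated form with $A$ and $B$ as given; that $A, B \in [0,1]$, so the right-hand side is a genuine probability generating function, follows from $\beta \ge 0$ and the identity $e^{D(t)} - 1 = \int_0^t (\delta-\beta)\, e^{D}$, while the degenerate case $s = 1$ reduces to $\phi \equiv 1$. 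I expect the main obstacle to be the regularity bootstrap of the second paragraph and, in particular, the correct handling of the boundary, since the càglàd convention forces $\phi$ to be discontinuous at $\tau = t$, so the terminal condition of the Riccati equation is the one-sided limit $s$ rather than the value $1$; once the ODE is in hand, the rest is the classical Kendall computation.
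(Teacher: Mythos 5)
Your proposal is correct and follows essentially the same route as the paper: specialize the symmetric integral equation of Corollary~\ref{thm:GenFunASP} to the birth--death kernels, differentiate to obtain the Riccati equation with the one-sided boundary value $s$ forced by the c\`agl\`ad convention, and solve it explicitly. The only cosmetic differences are that the paper works with the reparametrization $W(u)=F_{t-u}(s;t)$ rather than differentiating in $\tau$ directly, and that you spell out the Bernoulli/linear substitution where the paper simply states the solution.
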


\section{The genealogy of a branching tree} \label{sec:GenTrees}
Fix the observation time $T > 0$ and consider the set of \textit{extant branches},  
\begin{align*}
    \zeta^T = \{x \in \gamma \mid \tau_x < T \leq \tau_x + L_x\}.
\end{align*}
By definition, $|\zeta^T| = Z(T)$, so $\zeta^T$ is non-empty exactly on the event $\Omega^T = (Z(T) > 0)$, and under Assumption \ref{ass:regul}, $\zeta^T$ is $\P_{\tau,\alpha}$-a.s.\ finite for all $\tau,\alpha \geq 0$. On $\Omega^T$, let $\lambda^T$ denote the \textit{least common ancestor} of the extant branches, defined as  
\begin{align*}
    \lambda^T = \max_{\preceq} \left(\bigcap_{x \in \zeta^T} \anc_x \right)
\end{align*}
which is well defined since $\preceq$ induces a total order on $\anc_x$ for each $x \in U$.

The \emph{genealogy} of the (a)symmetric branching tree observed at time $T$ is the ancestral branching tree relating the extant branches $\zeta^T$ such that:
\begin{itemize}
    \item branches producing no extant progeny are removed,
    \item lengths of extant branches are censored at time $T$,
    \item successive branches with the same set of extant progeny are collapsed into a single branch, marked by the birth time and age of the first branch among them, and having length equal to the sum of their lengths,
    \item the Neveu projection is relabeled to satisfy Conditions~\ref{cond:neveuA}-\ref{cond:neveuC}.
\end{itemize}
See Figure \ref{fig:genealogy} for an example. Each branch in the genealogy thus represents a least common ancestor of a subtree of extant descendants in the original branching tree. This construction ensures that the genealogy itself is again a branching tree, as we will now formalize.

\begin{figure}[h]
    \centering
    \includegraphics[width=1\linewidth]{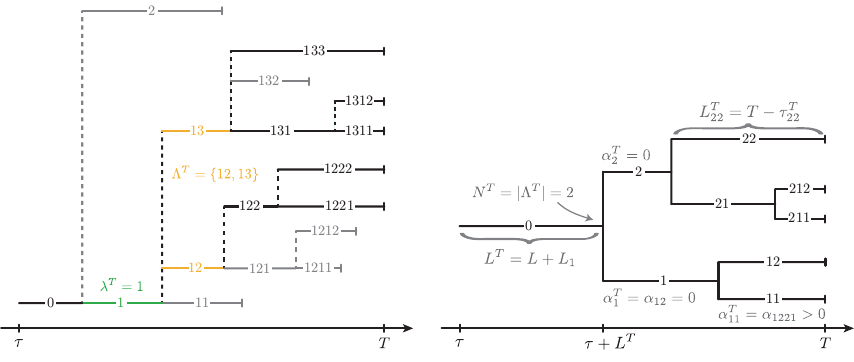}
    \vspace{-2em}
    \caption{On the left: The asymmetric tree from Figure \ref{fig:sym_asym}, censored at time $T$, with branches without extant progeny shown in grey, the least common ancestor $\lambda^T$ in green, and its offspring with extant progeny, $\Lambda^T$, in yellow. On the right: The associated genealogy with selected variables highlighted. Genealogies are drawn in the style of symmetric trees, even though birth ages may be non-zero; this is done to avoid suggesting that they represent asymmetric trees with linearly increasing birth ages. The same genealogical constructions apply to the symmetric branching tree, correcting for birth ages always being zero.}
    \label{fig:genealogy}
\end{figure}

On $\Omega^T$, the branch length $L^T$ of the root $0$ of the genealogy is thus given by
\begin{align*}
    L^T &= \begin{cases}
        \displaystyle
        \sum_{y \preceq \lambda^T} L_{y} &\ifs \lambda^T \not\in \zeta^T \\[0.5em]
        T - \tau &\ifs \lambda^T \in \zeta^T
    \end{cases} \\[0.5em]
    &=\begin{cases}
        \displaystyle
        \tau_{\lambda^T} + L_{\lambda^T} - \tau &\ifs \lambda^T \not\in \zeta^T \\[0.5em]
        T - \tau &\ifs \lambda^T \in \zeta^T.
    \end{cases}
\end{align*}
Thus $L^T$ is either the sum of the lengths of the successive common ancestral branches in the branching tree, or else it is censored at time $T$ if $\lambda^T$ is itself extant, i.e.\ if $Z(T) = 1$.  

We define $\Lambda^T$ as the set of offspring of $\lambda^T$ that produce extant progeny,
\begin{align*}
    \Lambda^T = \{\lambda^T k \in \gamma \mid Z_{\lambda^T k}(T) >0\},
\end{align*}
and from this the offspring number $N^T$ of the root $0$,
\begin{align*}
    N^T = |\Lambda^T|.
\end{align*}
It follows that on $\Omega^T$, $N^T = 0$ if and only if $Z(T) = 1$, that is, only the extant branches in the genealogy have no offspring. Moreover, $N^T \neq 1$, since the least common ancestor is either extant (and then has $0$ offspring) or is strictly an ancestor (and then has at least $2$ offspring).

Having described the root length and offspring number, the \textit{genealogy} $\G$ of a branching tree observed at time $T$ is defined recursively through its first-generation fundamental decomposition,
\begin{align} \label{eq:genealogy}
    \G = \Big(\tau, \alpha, \{0\}, L^T\Big) \sqcup \bigsqcup_{x \in \Lambda^T} \rf_{\Lambda^T}(x) \, (\G \circ \T_x),
\end{align}
where $\rf_{I}(x) = |\{y \in I \mid \rf y \leq \rf x\}|$ is the relative rank of $x$ in a line $I$. Note that for $x\in U$ such that, on $\Omega_x^T = \Omega^T \cap \Omega_x$, the subtree rooted at $x$ has only one extant branch (i.e.\ if $Z_x(T) = |\zeta^T \circ \T_x| = 1$), then $\Lambda^T \circ \T_x = \emptyset$ and so $\G \circ \T_x = (\tau_x, \alpha_x, \{0\}, T - \tau_x)$. Thus, as $\zeta^T$ is almost surely finite under Assumption~\ref{ass:regul}, the recursion depth is also almost surely finite.

As $\G$ is itself a branching tree, we can naturally apply the translation operator to obtain sub-genealogies. Writing $\gamma^T = \gamma \circ \G$ for the Neveu projection of the genealogy, we define, for any $x \in U$, the sub-genealogy rooted in $x$ on the event $(x \in \gamma^T)$ as
\begin{align*}
    \G_{x} = \T_{x} \circ \G.
\end{align*}
By definition, any sub-genealogy can also be expressed as the genealogy of a subtree of the underlying branching tree; that is, there exists $y \in U$ such that $(x \in \gamma^T) \subseteq \Omega_y^T$ and
\begin{align*}
    \G_{x} = \G \circ \T_y.
\end{align*}
If only $x$ is known and no further information about the embedding in the underlying branching tree is available, the choice of $y$ need not be unique, as different subtrees of the branching tree may induce the same sub-genealogy.

\subsection{The genealogical branching process}

On $(x \in \gamma^T)$, for any $x \in U$, we define the birth time, age, root branch length, and offspring number of the sub-genealogy rooted at $x$ by
\begin{align*}
    \tau^T_x = \tau \circ \G_x, \quad 
    \alpha^T_x = \alpha \circ \G_x, \quad 
    L_x^T = L \circ \G_x, \quad 
    N_x^T = N \circ \G_x.
\end{align*}
For $x = 0$, this agrees with our earlier definitions for the full genealogy, since $(0 \in \gamma^T) = \Omega^T$ and hence $L^T = L^T_{0}$ and $N^T = N^T_{0}$.

On the event $(N^T > 0)$, the birth times of the first-generation sub-genealogies can be read directly from the defining recursion (Eq. \ref{eq:genealogy}): for $k = 1,\dots,N^T$ and any $x \in \Lambda^T$,
\begin{align*}
    \tau^T_k = \tau_x = \tau^T + L^T.
\end{align*}

In the symmetric case, sub-genealogies inherit the trivial age assignment from the underlying branching tree, so that for all $k = 1,\dots,N^T$,
\begin{align*}
    \alpha^T_k = 0.
\end{align*}

In the asymmetric case, the situation is more nuanced. A rank-$1$ sub-genealogy may be rooted in a rank-$1$ branch of the underlying branching tree, in which case it inherits the age of the least common ancestor at the time of birth. If it is instead rooted in a branch of higher rank, it is born with age $0$,
\begin{align*}
    \alpha_{1}^T = 
    \begin{cases}
        \alpha_{\lambda^T} + L_{\lambda^T} & \ifs \exists x \in \Lambda^T : \rf x = 1, \\[0.3em]
        0 & \ifs \forall x \in \Lambda^T : \rf x > 1.
    \end{cases}
\end{align*}
Since non-rank-$1$ sub-genealogies cannot be rooted in rank-$1$ branches of the underlying tree, we have for $k = 2,\dots,N^T$,
\begin{align*}
    \alpha^T_k = 0.
\end{align*}

Thus, while the genealogy of a symmetric branching tree is again symmetric, the genealogy of an asymmetric branching tree need not inherit asymmetry in the same way. More generally, in genealogies of both types the age of a branch along its length may behave quite differently than in the underlying tree: it need not increase linearly with branch length, and may reset to zero at certain points along the branch.

The genealogy, being a branching tree, naturally defines its own branching processes. The key observation is that the simple branching process of the genealogy coincides with the reduced branching process of the underlying (a)symmetric branching tree,
\begin{proposition} \label{thm:branchingGenealogyProc}
    On $\Omega^T$, the simple Sevast'yanov process of the genealogy coincides with the reduced Sevast'yanov process of the underlying (a)symmetric branching tree,
    \begin{align*}
        (Z \circ \mathcal G)(t) = Z^T(t)
    \end{align*}
    for all $t \geq 0$.
\end{proposition}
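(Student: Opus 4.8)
The asserted identity is pathwise, so the plan is to establish it, for each fixed $\omega \in \Omega^T$, by induction on the number of extant branches $Z(T) = |\zeta^T|$, which is $\P_\ta$-a.s.\ finite under Assumption~\ref{ass:regul} by Proposition~\ref{thm:regul}. For $t > T$ both processes vanish --- $Z^T$ by Lemma~\ref{thm:bpProps}.2, and $Z \circ \G$ because every branch of the genealogy has died by time $T$ (its extant branches having been censored there) --- so it suffices to treat $t \le T$, where the definition of the reduced process simplifies to $Z^T(t) = \sum_{y} \ind_{(0, L_y]}(t - \tau_y)$, the sum over branches $y \in \gamma$ with $Z_y(T) > 0$. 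In the base case $Z(T) = 1$ one has $N^T = 0$ and $\lambda^T \in \zeta^T$, so $\G = (\tau, \alpha, \{0\}, L^T)$ with $L^T = T - \tau$ and $(Z \circ \G)(t) = \ind_{(0, T-\tau]}(t - \tau)$; on the other hand $\{y : Z_y(T) > 0\}$ is exactly the chain $\{y \in \gamma : y \preceq \lambda^T\}$ of ancestors of the unique extant branch $\lambda^T$ together with $\lambda^T$ itself, whose life intervals are consecutive with union $(\tau, \tau_{\lambda^T} + L_{\lambda^T}]$, and since $\tau_{\lambda^T} + L_{\lambda^T} \ge T$ this agrees with $(\tau, T]$ on $[0, T]$; hence $Z^T = Z \circ \G$ there.

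For the inductive step $Z(T) = m \ge 2$ one has $\lambda^T \notin \zeta^T$, $N^T \ge 2$, and $\tau_{\lambda^T} + L_{\lambda^T} < T$. Applying the principle of first generation (Lemma~\ref{thm:princFirstGen}) to the branching tree $\G$ together with its first-generation decomposition (Eq.~\ref{eq:genealogy}),
\begin{align*}
    (Z \circ \G)(t) &= \ind_{(0, L^T]}(t - \tau) + \sum_{k=1}^{N^T} (Z \circ \G_k)(t) \\
    &= \ind_{(0, L^T]}(t - \tau) + \sum_{x \in \Lambda^T} (Z \circ \G \circ \T_x)(t),
\end{align*}
since, again by Eq.~\ref{eq:genealogy}, the $k$-th first-generation subtree $\G_k = \T_k \circ \G$ of the genealogy equals $\G \circ \T_{x_k}$, where $x_k \in \Lambda^T$ is the branch of relative rank $k$. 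Each $\G \circ \T_x$ is the genealogy of the subtree $\T_x$, and since $\zeta^T$ is partitioned among the $N^T \ge 2$ nonempty sets $\zeta^T \cap \prog_x$, $x \in \Lambda^T$, each $Z_x(T) = |\zeta^T \cap \prog_x| < m$; the inductive hypothesis applied to $\T_x$ then gives $(Z \circ \G \circ \T_x)(t) = (Z^T \circ \T_x)(t)$. On the reduced side, one partitions $\{y \in \gamma : Z_y(T) > 0\}$ into the ancestral chain $\{y : y \preceq \lambda^T\}$ and, for each $x \in \Lambda^T$, the descendants of $x$ with extant progeny; the latter blocks contribute exactly $(Z^T \circ \T_x)(t)$, while the chain has consecutive life intervals with union $(\tau, \tau_{\lambda^T} + L_{\lambda^T}]$, which --- no censoring being needed since $\tau_{\lambda^T} + L_{\lambda^T} < T$ --- contributes $\ind_{(\tau, \tau_{\lambda^T} + L_{\lambda^T}]}(t) = \ind_{(0, L^T]}(t - \tau)$ using $L^T = \tau_{\lambda^T} + L_{\lambda^T} - \tau$. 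Summing the two sides yields $(Z \circ \G)(t) = Z^T(t)$.

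I expect the main obstacle to be the bookkeeping that links the collapsed root branch of $\G$ to its preimage in the underlying tree, at every level of the recursion: one must verify that the root length $L^T$ --- a sum of successive ancestral branch lengths, censored at $T$ exactly when $\lambda^T$ is itself extant --- reproduces the contribution of the chain $\{y \preceq \lambda^T\}$ to $Z^T$, while keeping the two regimes $\lambda^T \in \zeta^T$ (the base case, where censoring is active) and $\lambda^T \notin \zeta^T$ (the inductive step, where the chain dies before $T$) cleanly separated. The remaining ingredients --- the principle of first generation for $\G$, the identification $\G_k = \G \circ \T_{x_k}$ from Eq.~\ref{eq:genealogy}, the partition of the branches with extant progeny, and the strict decrease $Z_x(T) < Z(T)$ along the recursion --- are routine.
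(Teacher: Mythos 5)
Your proof is correct and follows essentially the same route as the paper: decompose the branches with extant progeny into the ancestral chain down to $\lambda^T$ (whose consecutive life intervals reproduce the root characteristic $\ind_{(0,L^T]}(t-\tau)$ of $\G$) and the subtrees rooted at $\Lambda^T$, then recurse. The only difference is presentational --- you organize the recursion as an induction on $Z(T)$, while the paper unfolds the same first-generation decomposition "recursively down the genealogy", relying on the almost surely finite recursion depth.
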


\subsection{Genealogical branching property}
The existence of the Markov kernel $(\P_\ta)_{\ta \geq 0}$ on $\Omega$, satisfying the branching property of Proposition \ref{thm:FirstGenBranching}, was essential for developing probabilistic insight into branching trees. In this section, we construct an analogous Markov kernel $(\Q_\ta)_{\tau \in [0,T), \alpha \geq 0}$ on $\Omega^T$ that captures a branching property for genealogies. In this setting, the first-generation sub-genealogies are conditionally independent, with respect to an enlarged conditioning $\sigma$-algebra that also accounts for survival of the sub-genealogies up to time $T$. To construct such a kernel, we assume throughout that $\Omega^T$ is not a null set.

\begin{assumption} \label{ass:notnull}
    $\Px{\Omega^T}{\ta} = 1 -p_\ta^0(T) \neq 0$ for all $0\leq \tau < T$ and $\alpha \geq 0$.
\end{assumption}

Under this assumption, we can define the Markov kernel $(\Q_\ta)_{\tau \in [0,T), \alpha \geq 0}$ on $\Omega^T$ by
\begin{align*}
    \Qx{F}{\ta} = \Pc{F}{\Omega^T}{\ta} = \frac{\Px{F, \Omega^T}{\ta}}{1 - p_{\ta}^0(T)},
\end{align*}
for all $F \in \mathscr F$, $\tau \in [0,T)$ and $\alpha \geq 0$. We will show that this kernel allow the distribution of the genealogy $\G$ to factorize according to its first-generation fundamental decomposition (Eq. \ref{eq:genealogy}), providing a genealogical analogue of the branching property. Note, however, that $\Lambda^T$ is not an optional line with respect to the filtration $(\mathscr F_I)_{I \in \mathcal I}$, since it depends on the fates of the subtrees rooted in it. Hence no branching property can be obtained by conditioning solely on the pruned subtree left behind. To overcome this, we introduce the enlarged filtration $(\mathscr G_I)_{I \in \mathcal I}$ on $\Omega^T$, defined by
\begin{align*}
    \mathscr G_I = \mathscr F_I \vee \sigma(Z_x(T)>0 : x \in I),
\end{align*}
for all $I \in \mathcal I$.  

The family $(\mathscr G_I)_{I \in \mathcal I}$ is indeed a filtration: if $I \preceq I'$, then $\mathscr F_I \subseteq \mathscr F_{I'}$, and for each $x \in I$ we either have $x \preceq I'$ or $x \not\preceq I'$. In the former case,
\begin{align*}
    (Z_x(T)>0) = \bigcup_{\substack{y \in I' \\ y \succeq x}} (Z_y(T)>0) \in \mathscr G_{I'}^T,
\end{align*}
while in the latter case $(Z_x(T)>0) \in \mathscr F_{I'} \subseteq \mathscr G_{I'}$. Hence $\mathscr G_I \subseteq \mathscr G_{I'}$, as required. This enlarged filtration is sufficient to make $\Lambda^T$ optional,
\begin{lemma}\label{thm:optional}
    $\Lambda^T$ is an optional line with respect to $(\mathscr G_I)_{I\in \mathcal I}$, that is, for all $I \in \mathcal I$,
    \begin{align*}
        (\Lambda^T \preceq I) \in \mathscr G_I.
    \end{align*}
\end{lemma}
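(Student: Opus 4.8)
The plan is to show that the event $(\Lambda^T \preceq I)$ can be reconstructed from the data generating $\mathscr G_I = \mathscr F_I \vee \sigma(Z_x(T) > 0 : x \in I)$. Recall that $\Lambda^T$ consists of the offspring $\lambda^T k$ of the least common ancestor $\lambda^T$ that carry extant progeny, and that $\lambda^T$ is itself determined by which branches have extant progeny. The key observation is that \emph{membership of a branch $x$ in $\bigcup_{y \in \Lambda^T} \prog_y$}, and more generally the location of $\lambda^T$ and $\Lambda^T$, is governed by the pattern of indicators $\ind_{(Z_z(T)>0)}$ along the tree; the subtle point is that the relevant indicators are exactly those attached to branches \emph{on or above} the line $I$ whenever $\Lambda^T \preceq I$ holds.

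First I would unwind the definitions: on $\Omega^T$, $\lambda^T$ is the $\preceq$-maximal branch $z$ such that every extant branch descends from $z$, equivalently the $\preceq$-maximal $z$ with $Z_z(T) = Z(T)$, and $\Lambda^T = \{\lambda^T k \in \gamma : Z_{\lambda^T k}(T) > 0\}$. I would then argue that $\Lambda^T \preceq I$ — i.e.\ every branch of $\Lambda^T$ has an ancestor in $I$ — forces $\lambda^T \preceq I$ as well: either $\lambda^T \in I$, or $\lambda^T$ lies strictly above $I$, since each $x \in \Lambda^T$ has an ancestor $y_x \in I$ with $\lambda^T \prec x$, $y_x \preceq x$, and if $\lambda^T \succeq y_x$ for some $x$ we would get $\lambda^T \in I$ or $\lambda^T$ above $I$; the case $\lambda^T \prec y_x$ strictly for all $x$ would mean no branch of $\Lambda^T$ is comparable-below to $I$, contradicting $y_x \preceq x$ with $\lambda^T \prec y_x \preceq x$ and $x$ itself possibly in $I$ — I would tighten this dichotomy to conclude that on $(\Lambda^T \preceq I)$ the branch $\lambda^T$ is identified by the $\mathscr F_I$-measurable Neveu structure together with finitely many survival indicators $\ind_{(Z_z(T)>0)}$ for $z \preceq$ (some branch of) $I$.

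Next, with $\lambda^T$ pinned down on $(\Lambda^T \preceq I)$ using $\mathscr F_I$ and the survival indicators of branches that are themselves in $I$ or are ancestors of branches in $I$ (all of which are $\mathscr G_I$-measurable, since $(Z_z(T)>0) = \bigcup_{y \in I, y \succeq z}(Z_y(T)>0) \in \mathscr G_I$ for $z$ an ancestor of $I$, exactly as in the filtration verification preceding the lemma), I would identify $\Lambda^T$ itself: each offspring $\lambda^T k$ is in $\gamma$ (an $\mathscr F_I$-event on $\Omega_{I}$) and either lies in $I$, lies strictly above some branch of $I$, or lies strictly below $I$ — but the last case cannot contribute to $\Lambda^T$ on $(\Lambda^T \preceq I)$, and in the first two cases $\ind_{(Z_{\lambda^T k}(T)>0)}$ is again $\mathscr G_I$-measurable by the same union identity. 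Hence on $(\Lambda^T \preceq I)$ the random set $\Lambda^T$ coincides with an explicitly $\mathscr G_I$-measurable random set, which shows $(\Lambda^T \preceq I) \cap \{\Lambda^T = A\} \in \mathscr G_I$ for each fixed stopping line $A$, and intersecting with $\{A \preceq I\}$ (an $\mathscr F_I$-event) and taking a countable union over $A \in \mathcal I$ yields $(\Lambda^T \preceq I) \in \mathscr G_I$.

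The main obstacle I anticipate is the careful bookkeeping in the dichotomy "$\lambda^T$ (and each element of $\Lambda^T$) is either in $I$, strictly above $I$, or strictly below $I$, and the below case is excluded on $(\Lambda^T \preceq I)$" — in particular making precise that the only survival indicators one needs are those $\mathscr G_I$ supplies, namely indicators of branches in $I$ (given directly) and of ancestors of $I$ (recovered via the union identity), and that no indicator $\ind_{(Z_z(T)>0)}$ with $z$ strictly below $I$ is ever required to decide $(\Lambda^T \preceq I)$. Once that combinatorial claim about stopping lines and the partial order $\preceq$ is established, the measurability conclusion is a routine assembly using that $\mathcal I$ is countable and that $\mathscr F_I$ already encodes the pruned Neveu structure $\kappa_I$ and hence all comparability relations among branches of $I$ and their ancestors.
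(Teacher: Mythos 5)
There is a genuine gap, and it originates in the order relation itself. The paper defines, for stopping lines, $I \preceq I'$ to mean that \emph{all branches of $I'$ have an ancestor in $I$}; hence $(\Lambda^T \preceq I)$ is the event that every branch of $I$ has a (weak) ancestor in $\Lambda^T$, i.e.\ that $\Lambda^T$ lies weakly \emph{above} $I$. You gloss it the other way round (``every branch of $\Lambda^T$ has an ancestor in $I$''), which is the event that $\Lambda^T$ lies at or \emph{below} $I$, and your dichotomy argument is built on that reading (``each $x\in\Lambda^T$ has an ancestor $y_x\in I$''). The deductions you then draw --- that $\lambda^T$ must lie in or strictly above $I$, and that no element of $\Lambda^T$ can lie strictly below $I$ --- are false under your reading: take $I=\{1\}$ and a realization in which all extant branches descend from $\lambda^T=11$, so that $\Lambda^T=\{111,112\}$; every element of $\Lambda^T$ has the ancestor $1\in I$, yet $\lambda^T$ and all of $\Lambda^T$ sit strictly below $I$. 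In that situation $\lambda^T$ cannot be ``pinned down'' from $\mathscr G_{\{1\}}=\mathscr F_{\{1\}}\vee\sigma(Z_1(T)>0)$, which carries only a single bit of information about the interior of the subtree rooted at $1$; in particular the events $(\Lambda^T=\{111,112\})$ and $(\Lambda^T=\{1111,1112\})$ are not separated by $\mathscr G_{\{1\}}$, so your final assembly via $\bigcup_A (\Lambda^T=A)\cap(\cdots)$ does not go through. (That union would in any case need the remark that $\Lambda^T$ only takes values in the countable family of finite sets of siblings, since $\mathcal I$ itself is uncountable.) You correctly flag the combinatorial bookkeeping as the main obstacle, but that is exactly the part left unresolved, and with the reversed order it cannot be resolved along the lines you propose.

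Under the correct reading your outline does become viable --- on $(\Lambda^T\preceq I)$ the branch $\lambda^T$ is a strict ancestor of every branch of $I$, so it lies in $\kappa_I$ and all the survival indicators you need are $\mathscr G_I$-measurable --- but the paper takes a shorter route that avoids identifying $\lambda^T$ altogether. It proves the pointwise identity
\begin{align*}
  (\Lambda^T \preceq I) \;=\; \bigcup_{\substack{I'\preceq I\\ |I'|\geq 2}}\;\bigcap_{x\in I'}\bigl(Z_x(T)>0\bigr),
\end{align*}
using that $|\Lambda^T|\neq 1$ (so $\Lambda^T$ itself is a witness $I'$ when nonempty) and that any antichain of at least two surviving branches forces the least common ancestor strictly above it, hence $\Lambda^T$ weakly above the antichain. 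Each event in the union belongs to $\mathscr G_I$ because a branch $x$ of a line $I'\preceq I$ is either a weak ancestor of some branch of $I$, in which case $(Z_x(T)>0)$ is recovered from $\mathscr F_I$ together with the indicators $(Z_y(T)>0)_{y\in I}$, or else incomparable to all of $I$, in which case $(Z_x(T)>0)\in\mathscr F_I$ already. If you fix the direction of $\preceq$, adopting this characterization is the cleanest repair.
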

\noindent We then define the $\sigma$-algebra associated with $\Lambda^T$ by
\begin{align*}
    \mathscr G_{\Lambda^T} = \{F \in \mathscr F \mid F \cap (\Lambda^T \preceq I) \in \mathscr G_I 
    \quad \text{for all } I \in \mathcal I\},
\end{align*}
which enables us to state and prove the genealogical branching property (see Figure \ref{fig:geneaBranching}),
\begin{theorem} \label{thm:lcabranching}
    For any $\tau,\alpha \geq 0$, the first-generation sub-genealogies $(\G_{k})_{k=1}^{N^T}$ are conditionally independent given $\mathscr G_{\Lambda^T}$, and their conditional laws are those of genealogies started at their respective birth times and ages. In particular,
    \begin{align*}
        \Qc{\prod_{k=1}^{N^T} f_{k} \circ \G_{k}}{\mathscr G_{\Lambda^T}}{\tau, \alpha} = \prod_{k=1}^{N^T} \Qx{f_{k} \circ \G}{\tau_{k}^T, \alpha_{k}^T}.
    \end{align*}
    for any collection of non-negative measurable functions $(f_{k})_{k \geq 1}$.
\end{theorem}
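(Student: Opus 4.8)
The plan is to deduce the genealogical branching property from the strong branching property of the underlying (a)symmetric branching tree (Proposition~\ref{thm:StrongBranching}), using that $\Lambda^T$ is an optional line for the enlarged filtration $(\mathscr G_I)_{I\in\mathcal I}$ (Lemma~\ref{thm:optional}) and that the genealogy of a subtree factorizes as the genealogy applied to that subtree. The starting point is the recursive identity $\G_k = \G\circ\T_{x_k}$, where $x_k$ is the branch of $\Lambda^T$ with relative rank $k$; this realises each first-generation sub-genealogy as a measurable functional of a first-generation subtree of $\T$ rooted along the optional line $\Lambda^T$. The key structural fact to exploit is that for a branch $x$ with $Z_x(T)>0$, one has $\G\circ\T_x$ depending on $\T_x$ \emph{only through} $\T_x$ itself, while on the complementary event $\G\circ\T_x$ is irrelevant; so if we set $g_k = f_k\circ\G$, then $f_k\circ\G_k = g_k\circ\T_{x_k}$ on $\Omega^T$, and the product $\prod_{k=1}^{N^T} f_k\circ\G_k$ can be rewritten as a product over $x\in\Lambda^T$ of $(\ind_{(Z_x(T)>0)}\,g_x)\circ\T_x$ for suitable relabelling of the $g$'s via the relative-rank map $\rf_{\Lambda^T}$.

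The heart of the argument is then to apply Proposition~\ref{thm:StrongBranching} \emph{not} directly (since it is stated for the filtration $(\mathscr F_I)$ and the law $\P_\ta$), but in a form adapted to $(\mathscr G_I)$ and $\Q_\ta$. Concretely, I would first establish an auxiliary version of the strong branching property for optional lines with respect to $(\mathscr G_I)_{I\in\mathcal I}$: for an optional line $J$ for this enlarged filtration, and non-negative measurable $h_x$ on $\Omega$,
\begin{align*}
    \Ec{\prod_{x\in J}\big(\ind_{(Z_x(T)>0)}\, h_x\big)\circ\T_x}{\mathscr G_J}{\ta}
    = \prod_{x\in J}\Ex{\ind_{(Z(T)>0)}\, h_x}{\tau_x,\alpha_x}.
\end{align*}
This follows from Proposition~\ref{thm:StrongBranching} applied with $f_x = \ind_{(Z(T)>0)}h_x$ together with the observation that conditioning on $\mathscr G_J = \mathscr F_J\vee\sigma(Z_x(T)>0:x\in J)$ is, on the product side, already absorbed by the indicators $\ind_{(Z_x(T)>0)}$: indeed $\sigma(Z_x(T)>0:x\in J)$ is $\sigma(\T_x:x\in J)$-measurable, so the extra conditioning only restricts to the event where the $Z_x(T)$-indicators take prescribed values, and on that event the claimed identity is exactly Proposition~\ref{thm:StrongBranching}. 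One must check carefully that $\mathscr G_J$ is well-defined and that the $\mathscr F_J$-part and the survival-indicators interact correctly; this bookkeeping, rather than any deep idea, is where the technical care lies.

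With this tool in hand the proof concludes by: (i) taking $J = \Lambda^T$, which is optional for $(\mathscr G_I)$ by Lemma~\ref{thm:optional}, and noting $\Lambda^T\subseteq\gamma$ so $Z_x(T)>0$ automatically for $x\in\Lambda^T$, making the indicators redundant there; (ii) identifying $\prod_{x\in\Lambda^T}(\ind_{(Z_x(T)>0)}g_x)\circ\T_x$ with $\prod_{k=1}^{N^T} f_k\circ\G_k$ via the relabelling, where $g_x$ encodes $f_{\rf_{\Lambda^T}(x)}\circ\G$; (iii) reading off from the enlarged strong branching property that
\begin{align*}
    \Ec{\prod_{k=1}^{N^T} f_k\circ\G_k}{\mathscr G_{\Lambda^T}}{\ta}
    = \prod_{x\in\Lambda^T}\Ex{\ind_{(Z(T)>0)}\,(f_{\rf_{\Lambda^T}(x)}\circ\G)}{\tau_x,\alpha_x},
\end{align*}
and (iv) normalising: divide through (or rather, note that the identity already holds under $\Ec{\cdot}{\Omega^T}{\ta}=\Qx{\cdot}{\ta}$ since $\Omega^T\in\mathscr G_{\Lambda^T}$), and recognise each factor $\Ex{\ind_{(Z(T)>0)}(f\circ\G)}{\tau_x,\alpha_x} = (1-p^0_{\tau_x,\alpha_x}(T))\,\Qx{f\circ\G}{\tau_x,\alpha_x}$, while the product of the normalising constants telescopes against the contribution of $\Omega^T$ on the left. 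Using $\tau_x = \tau^T_k$ and $\alpha_x = \alpha^T_k$ (established just before the theorem) for the branch $x$ of relative rank $k$ gives exactly the asserted product $\prod_{k=1}^{N^T}\Qx{f_k\circ\G}{\tau^T_k,\alpha^T_k}$. The main obstacle I anticipate is step (iv)'s normalisation: one must be sure that the $\Q$-conditional expectation, the enlarged $\sigma$-algebra $\mathscr G_{\Lambda^T}$, and the random number of factors $N^T$ combine so that the extinction-probability constants cancel exactly and do not leave a residual $\mathscr G_{\Lambda^T}$-measurable factor; this requires checking that $N^T$ and $(\tau^T_k,\alpha^T_k)_k$ are $\mathscr G_{\Lambda^T}$-measurable (which they are, being functions of $\Lambda^T$ and the pruned tree) so that the product over $k$ can legitimately be pulled outside the conditional expectation.
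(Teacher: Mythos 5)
Your overall architecture matches the paper's: realize each $\G_k$ as $\G\circ\T_{x_k}$ for the branch $x_k\in\Lambda^T$ of relative rank $k$, prove a branching property along optional lines of the enlarged filtration, and apply it to $\Lambda^T$. However, the auxiliary identity you propose as the engine of the proof is false as stated. Taking $h_x\equiv 1$ in your claimed identity, the left-hand side equals $\prod_{x\in J}\ind_{(Z_x(T)>0)}$, since each event $(Z_x(T)>0)$ is $\mathscr G_J$-measurable, whereas your right-hand side is the product $\prod_{x\in J}(1-p^0_{\tau_x,\alpha_x}(T))$, which is not an indicator. Your justification --- that the extra conditioning in $\mathscr G_J=\mathscr F_J\vee\sigma(Z_x(T)>0:x\in J)$ ``only restricts to the event where the indicators take prescribed values, and on that event the claimed identity is exactly Proposition \ref{thm:StrongBranching}'' --- is exactly where the error lies: restricting to an atom of the partition $\sigma$-algebra does not leave a conditional expectation unchanged, it renormalizes it. The correct statement, on the atom $\bigcap_{x\in J}(Z_x(T)>0)$, is
\begin{align*}
    \Ec{\prod_{x\in J} h_x\circ\T_x}{\mathscr G_J}{\ta}
    = \frac{\Ec{\prod_{x\in J}\bigl(h_x\ind_{(Z(T)>0)}\bigr)\circ\T_x}{\mathscr F_J}{\ta}}{\Ec{\prod_{x\in J}\ind_{(Z(T)>0)}\circ\T_x}{\mathscr F_J}{\ta}}
    = \prod_{x\in J}\frac{\Px{h_x;\, Z(T)>0}{\tau_x,\alpha_x}}{1-p^0_{\tau_x,\alpha_x}(T)}
    = \prod_{x\in J}\Qx{h_x}{\tau_x,\alpha_x},
\end{align*}
so the per-factor division by survival probabilities is produced by the conditioning on $\mathscr G_J$ itself. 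The paper obtains this ratio from a separate lemma on conditioning with respect to $\mathscr F\vee\mathscr H$ when $\mathscr H$ is generated by a countable partition (Lemma \ref{thm:countCond}); this is the missing ingredient in your argument. Without it, the normalization you try to recover in step (iv) has no source: passing from $\P_\ta$ to $\Q_\ta$ at the root contributes only the single factor $(1-p^0_\ta(T))^{-1}$, whereas you need one factor $(1-p^0_{\tau_x,\alpha_x}(T))^{-1}$ for each $x\in\Lambda^T$, and these are random, $\mathscr G_{\Lambda^T}$-measurable quantities that cannot ``telescope against the contribution of $\Omega^T$ on the left.''

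A second, smaller gap: you invoke Proposition \ref{thm:StrongBranching} with $J=\Lambda^T$, but that proposition applies to lines optional for $(\mathscr F_I)_{I\in\mathcal I}$, and $\Lambda^T$ is not optional for that filtration --- this is the entire reason the enlarged filtration is introduced. The correct order of operations, as in the paper, is to first prove the survival-conditioned branching property for deterministic finite lines $I$ (where both $\mathscr F_I$ and $\mathscr G_I$ are available and the partition lemma mediates between them), and only then extend to $(\mathscr G_I)$-optional lines by decomposing over the events $(\Lambda^T\preceq I)$ in the manner of Jagers. Your ``bookkeeping'' remark gestures at this, but the two items above are the actual mathematical content of the step, not routine verification.
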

This genealogical branching property, and our genealogical construction in general, is not specific to (a)symmetric branching trees with time-age type space. It extends to general multitype branching trees with types in arbitrary measurable spaces, where genealogies still decompose into conditionally independent sub-genealogies at the first generation under the appropriate survival-conditioned Markov kernel.

The independence structure provided by the branching property underlies the analysis that follows, yielding distributional characterizations of genealogical root length and offspring number, an alternative integral equation for the conditional generating functions of the reduced symmetric Sevast'yanov process, a full recursive description of the law of genealogies on the space of $T$-ultrametric trees, and an efficient simulation scheme.

\begin{figure}
    \centering
    \includegraphics[width=0.75\linewidth]{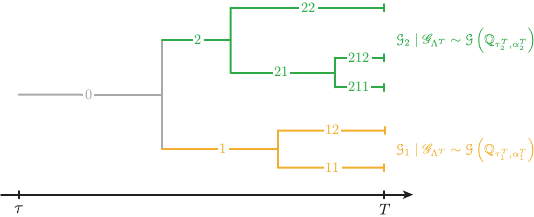}
    \caption{The first-generation fundamental decomposition of the genealogy in Figure \ref{fig:genealogy} into the root (grey) and the two first-generation sub-genealogies. By Theorem \ref{thm:lcabranching}, these subtrees are independent and distributed as the whole genealogy, conditional on the pruned underlying branching tree (not shown; see Figure \ref{fig:genealogy}).}
    \label{fig:geneaBranching}
\end{figure}

\subsection{Genealogical branch length and offspring number}
Let $E^T_\ta(s; t) = \Qx{s^{Z^T(t)}}{\ta}$ denote the conditional generating function of the reduced (a)symmetric Sevast'yanov process, given survival until the observation time $T$. For $t \in (\tau, T]$, this can be expressed in terms of the unconditional generating function $F^T_\ta$ as
\begin{align*}
    E_\ta^T(s ; t) &= \sum_{n = 0}^\infty s^n \Qx{Z^T(t) = n}{\ta} \\
    &= \frac{1}{1 - p_\ta^0(T)} \sum_{n = 1}^\infty s^n \Px{Z^T(t) = n}{\ta} \\
    &= \frac{F^T_\ta(s;t) - p_\ta^{T,0}(t)}{1 - p_\ta^0(T)} \\
    &= \frac{F^T_\ta(s;t) - p_\ta^{0}(T)}{1 - p_\ta^0(T)}.
\end{align*}
In particular, if we denote the conditional point probabilities of $Z^T(t)$ by 
$q_\ta^{T, n}(t) = \Qx{Z^T(t) = n}{\ta}$, then for $n \geq 1$ we obtain the simple relation
\begin{align*}
    q_\ta^{T, n}(t) = \frac{p_\ta^{T, n}(t)}{1 - p_\ta^{0}(T)},
\end{align*}
while $q^{T, 0}_\ta(t) = 0$ for all $t \in (\tau, T]$.

The distribution of $L^T$ under $\Q_\ta$, denoted $\mu^T_\ta = L^T(\Q_\ta)$, is characterized by its closed survival function
\begin{align*}
    \bar G^T_\ta(u) = \mu_\ta^T([u, \infty)) = \Qx{L^T \geq u}{\ta}.
\end{align*}
If $\bar G_\ta^T$ is absolutely continuous on $(0, T - \tau)$, this immediately yields the density of $L^T$ restricted to that interval. Because branch lengths are censored at $T$, $\mu^T_\ta$ necessarily contains a pure point part, so any density can only be defined on the open interval of non-censored branch lengths.

Since $Z^T$ is an increasing process and makes its first jump immediately after time $\tau + L^T$, the survival function can equivalently be expressed as the probability that $Z^T$ remains equal to $1$,
\begin{proposition} \label{thm:genBranchLength}
    Let $\tau \in [0, T)$ and $\alpha \geq 0$. Then
    \begin{align*}
        \bar G_\ta^T(u) = q_\ta^{T, 1}(\tau + u) = \frac{\partial_s F_\ta^T(0;\tau + u)}{1 - p_\ta^0(T)}
    \end{align*}
    for $u \in (0, T - \tau]$. If $\mu_\ta$ admits a continuously differentiable Lebesgue density for all $\ta \geq 0$, then the restriction of $\mu^T_\ta$ to $(0, T-\tau)$ has Lebesgue density $g_\ta^T$, given by
    \begin{align*}
        g_\ta^T(u) = -\partial_u q_\ta^{T, 1}(\tau+u) = \frac{-\partial_u\partial_s F_\ta^T(0;\tau + u)}{1 - p_\ta^0(T)}
    \end{align*}
    for $u \in (0, T-\tau)$.
\end{proposition}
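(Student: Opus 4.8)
The plan is to reduce both parts to a single pathwise identity, namely that on $\Omega^T$, for every $u\in(0,T-\tau]$,
\[
\{Z^T(\tau+u)=1\}=\{L^T\ge u\}.
\]
Granting this, part 1 is obtained by taking $\Q_\ta$-probabilities and invoking the generating-function relations recorded just above the statement, and part 2 by differentiating the resulting survival function under the smoothness input of Proposition \ref{thm:inteqDifferentiable}. Establishing the pathwise identity is the main obstacle; the rest is routine.

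For the identity I would pass to the genealogy. By Proposition \ref{thm:branchingGenealogyProc}, $Z^T(t)=(Z\circ\G)(t)$ on $\Omega^T$, and $\G$ is itself a branching tree born at $\tau$ with root branch length $L^T$ and offspring number $N^T$, its $N^T$ first-generation sub-genealogies $\G_k$ all being born at $\tau^T_k=\tau+L^T$. The Principle of First Generation (Lemma \ref{thm:princFirstGen}) applied to $\G$ gives $(Z\circ\G)(t)=\ind_{(0,L^T]}(t-\tau)+\sum_{k=1}^{N^T}(Z\circ\G_k)(t)$. For $t\in(\tau,\tau+L^T]$ the first term is $1$ while, by the càglàd convention, no $\G_k$ is yet counted since it is born at $\tau+L^T\ge t$; hence $(Z\circ\G)(t)=1$. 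For $t\in(\tau+L^T,T]$ the first term vanishes, and here I use $N^T\ne1$: if $N^T=0$ then $L^T=T-\tau$ and there is no such $t\le T$; if $N^T\ge2$ then each $\G_k$, being the genealogy of a subtree that still has a branch alive at $T$, satisfies $(Z\circ\G_k)(t)\ge1$ on $(\tau^T_k,T]$, so $(Z\circ\G)(t)\ge N^T\ge2$. Therefore $(Z\circ\G)(\tau+u)=1$ precisely when $u\le L^T$. (Equivalently, one may argue directly in the underlying tree: the branches counted by $Z^T$ at a time $t\in(\tau,\tau+L^T]$ are exactly those on the maternal chain $\{y:y\preceq\lambda^T\}$, of which exactly one is alive at each such $t$, whereas for $t\in(\tau+L^T,T]$ each of the $N^T\ge2$ subtrees rooted at a branch of $\Lambda^T$ contributes at least one counted branch.)

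Taking $\Q_\ta$-probabilities in the identity gives $\bar G_\ta^T(u)=\Qx{L^T\ge u}{\ta}=\Qx{Z^T(\tau+u)=1}{\ta}=q_\ta^{T,1}(\tau+u)$ for $u\in(0,T-\tau]$; combining with $q_\ta^{T,1}(t)=p_\ta^{T,1}(t)/(1-p_\ta^0(T))$ and $p_\ta^{T,1}(t)=\partial_sF_\ta^T(0;t)$ yields the first formula. For part 2, assume $\mu_\ta$ admits a continuously differentiable Lebesgue density for all $\ta\ge0$. Then Proposition \ref{thm:inteqDifferentiable} (case $n=m=1$) makes $t\mapsto\partial_sF_\ta^T(s;t)$ continuously differentiable on $(\tau,T)$, so $u\mapsto\bar G_\ta^T(u)$ is $C^1$ on $(0,T-\tau)$ and, being a survival function, non-increasing there; set $g_\ta^T(u)=-\partial_u\bar G_\ta^T(u)\ge0$. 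For $0<a<b<T-\tau$ one has $\mu_\ta^T([a,b))=\bar G_\ta^T(a)-\bar G_\ta^T(b)=\int_a^bg_\ta^T(u)\,\di u$, and since such half-open intervals generate the Borel $\sigma$-algebra of $(0,T-\tau)$, the restriction of $\mu_\ta^T$ to $(0,T-\tau)$ has Lebesgue density $g_\ta^T$. Rewriting $g_\ta^T$ via part 1 gives $g_\ta^T(u)=-\partial_uq_\ta^{T,1}(\tau+u)=\frac{-\partial_u\partial_sF_\ta^T(0;\tau+u)}{1-p_\ta^0(T)}$, as asserted.
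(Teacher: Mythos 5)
Your proposal is correct and follows essentially the same route as the paper: both rest on the pathwise identity $(L^T\ge u)=(Z^T(\tau+u)=1)$ derived from Proposition \ref{thm:branchingGenealogyProc}, then take $\Q_\ta$-probabilities and invoke Proposition \ref{thm:inteqDifferentiable} for the density. The paper merely asserts the identity as "easy to see," whereas you supply the details (the first-generation decomposition of $\G$ and the fact that $N^T\neq 1$), which is a welcome elaboration rather than a deviation.
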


Thus, both the survival function and the density of $L^T$, when it exists, are determined by derivatives of the unconditional generating function $F^T_\ta$. By applying the Leibniz integral rule, explicit integral equations for these quantities can be obtained.

From this characterization of $L^T$, we also obtain the distribution of the genealogical birth times $\tau_{k}^T = \tau + L^T$ for first-generation $k \in \gamma^T \cap \mathbb N$. By contrast, the distribution of $\alpha_{\mathcal 1}$ is more intricate, since it is not a simple function of $L^T$, and no explicit formula has been identified. For this reason we restrict our analytical attention in the remainder of this section to genealogies of symmetric branching trees. Asymmetric genealogies can nevertheless be studied in practice by Monte Carlo simulation, as discussed in Section \ref{sec:sim-genealogy}.

If we denote the generating function of the genealogical offspring number $N^T$ in a symmetric genealogy under $\Q_\tau$ by
\begin{align*}
    h^T_{\tau, \ell}(s) = \Qc{s^{N^T}}{L^T = \ell}{\tau},
\end{align*}
then the genealogical branching property, together with Proposition \ref{thm:branchingGenealogyProc}, yields an explicit integral equation for the conditional generating function of $Z^T$. The derivation of this equation, as well as the proof of uniqueness of its solution, follows the same argument as in Theorem \ref{thm:GenFunRASP}.  

\begin{proposition}
    The conditional generating function of the reduced symmetric Sevast'yanov process born at time $\tau \in [0, T)$ is the unique bounded solution on $\Delta^T$ to the integral equation
    \begin{align*}
        E_\tau^T(s;t) = s^{\ind_{(t > \tau)}} \mu^T_\tau([t-\tau, \infty)) 
        + \int_{(0, t - \tau)} h_{\tau, \ell}^T(E_{\tau+\ell}^T(s;t)) \di \mu_\tau^T(\ell).
    \end{align*}
\end{proposition}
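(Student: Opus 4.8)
The plan is to mirror the proof of Theorem \ref{thm:GenFunRASP}, transported to the genealogy viewed as a branching tree under $\Q_\tau$, using that the simple Sevast'yanov process of the genealogy equals the reduced one of the underlying tree (Proposition \ref{thm:branchingGenealogyProc}) together with the genealogical branching property (Theorem \ref{thm:lcabranching}). First I would fix $\tau \in [0,T)$, $t \in [\tau,T]$, $s \in [0,1]$, and condition on the root length $L^T$ of the genealogy. On the event $(L^T \geq t - \tau)$ the genealogical root is still present at time $t$, so $Z^T(t) \geq 1$; more precisely, writing $Z \circ \mathcal G$ for the simple process of $\mathcal G$, the Principle of First Generation (Lemma \ref{thm:princFirstGen}) applied to $\mathcal G$ gives
\begin{align*}
    Z^T(t) = (Z\circ\mathcal G)(t) = \ind_{(0, L^T]}(t-\tau) + \sum_{k=1}^{N^T} (Z\circ\mathcal G)_k(t),
\end{align*}
and on $(L^T \geq t-\tau)$ the sum is empty (the first-generation sub-genealogies are born at $\tau + L^T \geq t$, and a branching process is zero before and at its birth time, by Lemma \ref{thm:bpProps}), hence $Z^T(t) = \ind_{(t>\tau)}$ there. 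This contributes $s^{\ind_{(t>\tau)}}\,\mu^T_\tau([t-\tau,\infty))$, matching the first term.

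On the complementary event $(L^T < t - \tau)$, the root has already branched by time $t$, contributing $0$ to $Z^T(t)$ (the genealogical root is not counted at $t > \tau + L^T$), and $Z^T(t) = \sum_{k=1}^{N^T}(Z\circ\mathcal G)_k(t)$. Here I would condition on $\mathscr G_{\Lambda^T}$, which is rich enough to contain $L^T$, $N^T$, and the birth times $\tau^T_k = \tau + L^T$ (by Lemma \ref{thm:optional} and the discussion preceding it), and apply the genealogical branching property: given $\mathscr G_{\Lambda^T}$, the sub-genealogies $\mathcal G_k$ are independent, each distributed as a genealogy started at $\tau^T_k = \tau + \ell$ with age $0$, so $(Z\circ\mathcal G)_k = Z^T \circ \mathcal G_k$ has conditional generating function $E^T_{\tau+\ell}(s;t)$. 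Thus
\begin{align*}
    \Qc{s^{Z^T(t)}\ind_{(L^T < t-\tau)}}{L^T}{\tau} = \ind_{(L^T < t-\tau)} \Qc{\prod_{k=1}^{N^T} E^T_{\tau+L^T}(s;t)}{L^T}{\tau} = \ind_{(L^T<t-\tau)}\, h^T_{\tau, L^T}\!\big(E^T_{\tau+L^T}(s;t)\big),
\end{align*}
using the definition of $h^T_{\tau,\ell}$ as the generating function of $N^T$ given $L^T = \ell$. Integrating against $\mu^T_\tau$ over $(0, t-\tau)$ — note $\mu^T_\tau$ may have an atom at $0$ only if a genealogy can have zero root length, which by construction it cannot, so the open interval is harmless — yields the integral term, and combining the two events gives the claimed equation.

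For uniqueness on $\Delta^T$, I would follow verbatim the Grönwall argument used for Theorem \ref{thm:GenFunRASP}: if $E^{(1)}, E^{(2)}$ are two bounded solutions, their difference $\Phi_\tau(s;t) = |E^{(1)}_\tau(s;t) - E^{(2)}_\tau(s;t)|$ satisfies, using that $h^T_{\tau,\ell}$ is $1$-Lipschitz on $[0,1]$ (being a generating function of an $\mathbb N_0$-valued variable, with derivative bounded by its mean — here I must check the genealogical offspring mean is finite and uniformly bounded, which follows from Assumption \ref{ass:regul} via $N^T \leq N$ summed over the pruned structure, or more carefully from Proposition \ref{thm:regul}), an inequality of the form $\Phi_\tau(s;t) \leq \int_{(0,t-\tau)} \Phi_{\tau+\ell}(s;t)\,\di\mu^T_\tau(\ell)$, and since $\mu^T_\tau$ inherits the bounded-density property of the reference measure from Proposition \ref{thm:genBranchLength} (or directly, $\mu^T_\tau \leq (1-p^0_\tau(T))^{-1}\mu_\tau$), iterating this bound drives $\Phi$ to $0$. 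The main obstacle I anticipate is the bookkeeping around $\mathscr G_{\Lambda^T}$-measurability of $L^T$ and $N^T$ and the justification that conditioning on $\mathscr G_{\Lambda^T}$ and then on $L^T$ composes correctly — i.e. that $\Qc{s^{Z^T(t)}}{L^T}{\tau} = \Q_\tau\big(\Qc{s^{Z^T(t)}}{\mathscr G_{\Lambda^T}}{\tau}\,\big|\,L^T\big)$ — together with verifying the uniform bound on the genealogical offspring mean needed for Grönwall; the analytic core is otherwise identical to Theorem \ref{thm:GenFunRASP}.
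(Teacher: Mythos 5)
Your derivation of the integral equation is exactly the route the paper intends: the paper gives no separate proof of this proposition, stating only that it "follows the same argument as in Theorem \ref{thm:GenFunRASP}" via Proposition \ref{thm:branchingGenealogyProc} and the genealogical branching property, which is precisely your decomposition on $(L^T \geq t-\tau)$ versus $(L^T < t-\tau)$ followed by conditioning on $\mathscr G_{\Lambda^T}$ and then on $L^T$. One concrete correction to your uniqueness sketch: the claimed domination $\mu^T_\tau \leq (1-p^0_\tau(T))^{-1}\mu_\tau$ is false, since $L^T$ is a \emph{sum} of successive ancestral branch lengths rather than a single branch length (e.g.\ if $\mu_\tau = \delta_1$ then $L^T$ takes values in $\{1,2,\dots\}\cup\{T-\tau\}$, so $L^T(\Q_\tau)\not\ll\mu_\tau$); to run Gr\"onwall you must instead dominate the restriction of $\mu^T_\tau$ to $(0,T-\tau)$ by a locally finite reference measure with uniformly bounded density, which is what Proposition \ref{thm:genBranchLength} (boundedness of $g^T_\ta$ via $-\partial_u\partial_s F^T_\ta(0;\tau+u)$) is for, together with a uniform bound on the conditional mean of $N^T$ (e.g.\ via $N^T \leq Z(T)$ and Proposition \ref{thm:regul}). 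With that substitution your argument is complete and coincides with the paper's.
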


If the branch lengths further admit a Lebesgue density, this characterization of $E_\tau^T$ allows us to identify an integro-differential equation for the generating function of $N^T$,

\begin{proposition} \label{thm:hTintdifeq}
    If $\mu_\tau$ has a continuous Lebesgue density $g_\tau$ for all $\tau \geq 0$, then the generating function $h_{\tau, \ell}^T(s)$ of the number of first-generation genealogical offspring is the unique bounded solution of the integro-differential equation
    \begin{align*}
        h^T_{\tau, \ell}(s) 
        = \frac{\partial_t E_\tau^T (s;\tau + \ell)}{g^T_\tau(\ell)} - s 
        + \frac{1}{g^T_\tau(\ell)} \int_{(0, \ell)} 
            \partial_t h^{T}_{\tau, u}\!\left(E^T_{\tau + u}(s;\tau + \ell)\right) 
            g_\tau(u) \di u
    \end{align*}
    for $\ell > 0$.
\end{proposition}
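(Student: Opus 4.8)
The plan is to differentiate, in the time variable $t$, the integral equation for the conditional generating function $E_\tau^T(s;t)$ established in the preceding proposition, and then to read off $h_{\tau,\ell}^T(s)$ by specialising to $t=\tau+\ell$. Since $\mu_\tau$ has a continuous Lebesgue density, Proposition \ref{thm:genBranchLength} supplies the density $g_\tau^T$ of $\mu_\tau^T$ on $(0,T-\tau)$ together with the identity $\mu_\tau^T([t-\tau,\infty)) = \bar G_\tau^T(t-\tau) = q_\tau^{T,1}(t)$. Hence, for $t\in(\tau,T)$ the integration domain $(0,t-\tau)$ lies inside $(0,T-\tau)$ and the equation becomes
\begin{align*}
    E_\tau^T(s;t) = s\, q_\tau^{T,1}(t) + \int_0^{t-\tau} h_{\tau,\ell}^T\!\left(E_{\tau+\ell}^T(s;t)\right) g_\tau^T(\ell)\,\di\ell ,
\end{align*}
whose right-hand side is now absolutely continuous in $t$, so the Leibniz integral rule is available.

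First I would record the regularity. Since $E_\tau^T(s;t) = \bigl(F_\tau^T(s;t)-p_\tau^0(T)\bigr)/\bigl(1-p_\tau^0(T)\bigr)$ is affine in $F_\tau^T$, and $t\mapsto F_\tau^T(s;t)$ and $t\mapsto\partial_s F_\tau^T(s;t)$ are $C^1$ on $(\tau,T)$ by Proposition \ref{thm:inteqDifferentiable} (under the mild extra smoothness that also makes $g_\tau^T$ itself continuous), the map $t\mapsto E_\tau^T(s;t)$ is $C^1$ there. Then I would differentiate the displayed identity in $t$, interchanging $\partial_t$ with the integral — legitimate by dominated convergence and Assumption \ref{ass:regul} — and use $\partial_t q_\tau^{T,1}(t) = -g_\tau^T(t-\tau)$, which also follows from Proposition \ref{thm:genBranchLength}. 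This produces
\begin{align*}
    \partial_t E_\tau^T(s;t) = -s\, g_\tau^T(t-\tau) + h^T_{\tau, t-\tau}\!\left(E_t^T(s;t)\right) g_\tau^T(t-\tau) + \int_0^{t-\tau} \partial_t\!\left[h_{\tau,\ell}^T\!\left(E_{\tau+\ell}^T(s;t)\right)\right] g_\tau^T(\ell)\,\di\ell ,
\end{align*}
the middle term being the Leibniz boundary contribution.

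The crux is that boundary term. As $\ell\uparrow t-\tau$, the quantity $E_{\tau+\ell}^T(s;t)$ is the conditional generating function of a genealogy whose root is born at a time increasing up to the observation time $t<T$; just after its birth such a genealogy consists of the single root branch, which on $\Omega^T$ necessarily leaves extant progeny at $T$, so $E_{\tau+\ell}^T(s;t)\to s$ — this is the right-limit identity $E_\tau^T(s;\tau+)=s$ obtained from $F_\tau^T(s;\tau+)=s+(1-s)p_\tau^0(T)$. Using continuity of $\ell\mapsto h_{\tau,\ell}^T$ and of $g_\tau^T$, the boundary term equals $h^T_{\tau, t-\tau}(s)\,g_\tau^T(t-\tau)$, which reintroduces the unknown. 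Specialising to $t=\tau+\ell$, dividing by $g_\tau^T(\ell)>0$, and writing the surviving integrand, after relabelling the integration variable, through the chain rule as $\partial_t h_{\tau,u}^T\bigl(E_{\tau+u}^T(s;\tau+\ell)\bigr)$ against $g_\tau^T(u)\,\di u$, then gives after rearrangement the asserted integro-differential equation. I expect this to be the main obstacle: one must verify that $\ell\mapsto h_{\tau,\ell}^T\!\left(E_{\tau+\ell}^T(s;t)\right)g_\tau^T(\ell)$ extends continuously to the endpoint $\ell=t-\tau$ with value $h^T_{\tau, t-\tau}(s)g_\tau^T(t-\tau)$, and that $\partial_t$ may be taken under the integral sign — the subtlety being that $h^T$ occurs both inside the integral and as the endpoint value, so the equation is genuinely implicit.

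For uniqueness I would proceed in the spirit of Theorem \ref{thm:GenFunRASP}. The equation determines $h_{\tau,\ell}^T(s)$ from the already-characterised $E_\tau^T$ and from $\{h_{\tau,u}^T: u<\ell\}$ only, so it is a Volterra equation in $\ell$. A direct Grönwall estimate on $\sup_s|h_{\tau,\ell}^T(s)|$ does not close, since the chain rule makes the integrand involve $s$-derivatives of $h_{\tau,u}^T$; instead I would expand both sides in powers of $s$, using that $E_{\tau+u}^T(0;t)=q_{\tau+u}^{T,0}(t)=0$ for $t>\tau+u$, which decouples the conditional point probabilities $\nu_{\tau,\ell}^T(n)$ into a system of linear Volterra equations lower-triangular in $n$. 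Each such equation has a unique bounded solution by the standard Volterra/Grönwall argument, so $h_{\tau,\ell}^T$ is determined uniquely.
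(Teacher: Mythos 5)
Your derivation is essentially the paper's: differentiate the integral equation for $E_\tau^T(s;t)$ in $t$ via the Leibniz rule, identify the boundary term as $h^T_{\tau,t-\tau}(s)\,g^T_\tau(t-\tau)$ through the limit $E^T_{t-}(s;t)=s$, evaluate at $t=\tau+\ell$ and rearrange, with uniqueness by a Volterra/Gr\"onwall argument — and you are in fact more explicit than the paper on both the boundary limit and on why a naive Gr\"onwall bound on $\sup_s|h^T_{\tau,\ell}(s)|$ does not close and should be replaced by a coefficient-wise, lower-triangular linear Volterra argument. The only discrepancy is that your integrand carries $g^T_\tau(u)\di u$ where the stated equation has $g_\tau(u)\di u$; since the integral equation for $E^T_\tau$ is taken against $\di \mu^T_\tau$, your version is the internally consistent one and the difference traces to the paper's own notation rather than to an error in your argument.
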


\subsection{The distribution of a genealogy} \label{sec:distGen}
Genealogies take values in the space of finite $T$-ultrametric branching trees, i.e.\ branching trees where the path length from the root to any leaf equals $T-\tau$,
\begin{align*}
    \bar\Omega^T = \left(\sum_{y \preceq x} L_y = T - \tau \;\; \text{for all } x \in \breve \gamma \right),
\end{align*}
where $\breve u = \{x \in u \mid N(x) = 0\}$ denotes the leaves of a Neveu tree $u \in \Gamma$.  
On $\bar\Omega^T$ we have $|\breve\gamma| = Z(T)$, and under Assumption~\ref{ass:regul} the number of leaves is almost surely finite. Thus the Neveu projection of any $T$-ultrametric genealogy lies in the countable set of finite Neveu trees $\bar\Gamma$,  
\begin{align*}
    \gamma(\bar\Omega^T) \subseteq \bar\Gamma,
\end{align*}
in contrast to the full space $\Gamma$ (see Proposition \ref{thm:GammaCount}).

The trace $\sigma$-algebra on $\bar\Gamma$ is thus generated by singletons $\{u\} \subset \bar\Gamma$. Accordingly, the trace $\sigma$-algebra on $\bar\Omega^T$ is generated by sets of the form
\begin{align*}
    F = \Bigl\{ (t, a, u, (\ell_x)_{x\in u}) \,\Big|\, (t, a)\in A, \, (\ell_x)_{x\in u}\in B \cap \mathcal C_{u, t} \Bigr\},
\end{align*}
for some $u \in \bar\Gamma$, $A \in \mathscr B_{[0,T)} \otimes \mathscr B_{[0,\infty)}$, $B = \bigtimes_{x\in u} B_x \in \mathscr B_{(0,\infty)}^{\otimes u}$, and with the ultrametric constraint
\begin{align*}
    \mathcal C_{u, t} = \Bigl\{ (\ell_x)_{x\in u} \,\Big|\, \sum_{y \preceq x} \ell_y = T-t \;\; \text{for all } x\in \breve u \Bigr\}.
\end{align*}
For $k \leq N(u)$, define the translation of $F$ to the subtree rooted at the $k$th child by  
\begin{align*}
    F^{(k)} = \Bigl\{ (t, a, \theta_k(u), (\ell_{ky})_{y\in \theta_k(u)}) \,\Big|\, (\ell_{ky})_{y\in \theta_k(u)} \in B^{(k)} \cap \mathcal C_{\theta_k(u), t} \Bigr\},
\end{align*}
where $B^{(k)} = \bigtimes_{y\in \theta_k(u)} B_{ky}$. This yields the following recursive description of the genealogy's distribution,
\begin{proposition} \label{thm:recurDist}
    Let $u \in \bar\Gamma$, and let $F$ and $F^{(k)}$ for $k \leq N(u)$ be as above, with some $A \in \mathscr B_{[0, T)} \otimes \mathscr B_{[0, \infty)}$ and $B = \bigtimes_{x\in u} B_x \in \mathscr B_{(0,\infty)}^{\otimes u}$.  
    For $(\tau,\alpha)\in A$ and $n = N(u)$, we have
    \begin{align*}
        \Qx{\G \in F}{\tau,\alpha}
        = \Qx{ \prod_{k=1}^n \Qx{\G_k \in F^{(k)}}{\tau_k^T, \alpha_k^T} \,;\, L^T \in B_0, N^T = n }{\tau,\alpha},
    \end{align*}
    whenever $|u| > 1$. If $u = \{0\}$, then
    \begin{align*}
        \Qx{\G \in F}{\tau,\alpha} = \Qx{L^T = T-\tau}{\tau,\alpha} = \bar G_\ta^T(T-\tau).
    \end{align*}
\end{proposition}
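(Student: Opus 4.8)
The plan is to prove the recursion by unwinding the defining first-generation decomposition of the genealogy (Eq.~\ref{eq:genealogy}) and applying the genealogical branching property (Theorem~\ref{thm:lcabranching}) together with Proposition~\ref{thm:branchingGenealogyProc}. The base case $u=\{0\}$ is immediate: the event $\{\G\in F\}$ then forces $N^T=0$, which on $\Omega^T$ happens exactly when $Z(T)=1$, i.e.\ when $\lambda^T$ is extant and $L^T=T-\tau$; intersecting with $\mathcal{C}_{\{0\},t}$ just re-imposes $L^T=T-\tau$, and the leftover constraint $L^T\in B_0$ is automatic once $L^T=T-\tau$ provided $T-\tau\in B_0$ (and otherwise both sides vanish). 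So the base case reduces to reading off $\bar G^T_{\ta}(T-\tau)=\Qx{L^T=T-\tau}{\ta}$ from the definition of $\bar G^T_\ta$ in Proposition~\ref{thm:genBranchLength}.

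For the inductive step, I would first observe that, since $u\in\bar\Gamma$ has $|u|>1$, the event $\{\G\in F\}$ is contained in $\{N^T=N(u)=n\}$ and on this event $\G=(\tau,\alpha,\{0\},L^T)\sqcup\bigsqcup_{k=1}^{n}k\,\G_k$ with $\G_k=\G\circ\T_x$ for the appropriately ranked $x\in\Lambda^T$. Decomposing the product test function along the ultrametric constraint, the event $\{\G\in F\}$ splits as
\begin{align*}
    \{\G\in F\} = \{L^T\in B_0\}\cap\{N^T=n\}\cap\bigcap_{k=1}^{n}\{\G_k\in F^{(k)}\},
\end{align*}
using that the constraint $\mathcal{C}_{u,t}$ factors into the root contribution $L^T$ (which it does not constrain, since $0\notin\breve u$ as $|u|>1$) and the per-subtree constraints $\mathcal{C}_{\theta_k(u),\tau^T_k}$ with $\tau^T_k=\tau+L^T$; here one must check that $T-\tau^T_k = T-\tau-L^T$ is exactly the ultrametric depth required of $\theta_k(u)$, which follows from $\sum_{y\preceq x}\ell_y = T-t$ for $x\in\breve u$ by peeling off the root length.

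Next I would write $\Qx{\G\in F}{\ta}$ as $\Qx{\ind_{\{L^T\in B_0,\,N^T=n\}}\prod_{k=1}^n\ind_{\{\G_k\in F^{(k)}\}}}{\ta}$ and condition on $\mathscr G_{\Lambda^T}$. Since $\{L^T\in B_0\}$, $\{N^T=n\}$ and the birth times/ages $\tau^T_k,\alpha^T_k$ are all $\mathscr G_{\Lambda^T}$-measurable --- $L^T$ is a function of $\lambda^T$ and the pruned tree, and $N^T=|\Lambda^T|$ is determined by $\Lambda^T$, which by Lemma~\ref{thm:optional} is optional for $(\mathscr G_I)$ --- the tower property and Theorem~\ref{thm:lcabranching} give
\begin{align*}
    \Qx{\G\in F}{\ta} = \Qx{\ind_{\{L^T\in B_0,\,N^T=n\}}\prod_{k=1}^{n}\Qx{\G\in F^{(k)}}{\tau^T_k,\alpha^T_k}}{\ta},
\end{align*}
where I have used that under $\Q_{\tau^T_k,\alpha^T_k}$ the law of $\G_k$ is that of $\G$, and that each $F^{(k)}$ has the same structural form (built from $\theta_k(u)\in\bar\Gamma$, the box $B^{(k)}$, and the ultrametric constraint at depth $T-\tau^T_k$), so $\Qx{\G_k\in F^{(k)}}{\tau^T_k,\alpha^T_k}=\Qx{\G\in F^{(k)}}{\tau^T_k,\alpha^T_k}$. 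This is exactly the claimed identity, once one notes $F^{(k)}$ is the natural translate used in the statement. I would also remark that Proposition~\ref{thm:branchingGenealogyProc}, giving $Z\circ\G=Z^T$, is what guarantees $N^T>0\iff Z(T)>1$ and hence that $|u|>1$ forces $N^T=n\geq 2$, consistent with the earlier observation that $N^T\neq 1$.

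The main obstacle I anticipate is bookkeeping the relabeling in Eq.~\ref{eq:genealogy}: the recursion attaches $\G\circ\T_x$ at relative rank $\rf_{\Lambda^T}(x)$ rather than at $\rf x$, so one must verify that the translate $F^{(k)}$ appearing in the statement --- indexed by the $k$th child $\theta_k(u)$ of the \emph{relabeled} Neveu tree $u$ --- is matched to the correct subtree $\G_k=\G\circ\T_x$ with $\rf_{\Lambda^T}(x)=k$. This is a matter of carefully tracking how $\gamma^T$ is obtained from $\gamma$ by the collapse-and-relabel operations, and confirming that on $\{\gamma^T=u\}$ the identity $\theta_k(\gamma^T)=\theta_k(u)$ holds with the child indices aligned to relative ranks; none of it is deep, but it is where an error would most easily creep in. A secondary point requiring care is the measurability of the inner conditional probability $\Qx{\G\in F^{(k)}}{\tau^T_k,\alpha^T_k}$ as a function on $\Omega^T$ --- this follows from $(\tau,\alpha)\mapsto\Qx{\G\in F^{(k)}}{\tau,\alpha}$ being measurable (a standard kernel-measurability fact) composed with the $\mathscr G_{\Lambda^T}$-measurable map $(\tau^T_k,\alpha^T_k)$.
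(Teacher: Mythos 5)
Your proposal is correct and follows essentially the same route as the paper's proof: split $\{\G\in F\}$ into $\{L^T\in B_0\}\cap\{N^T=n\}\cap\bigcap_k\{\G_k\in F^{(k)}\}$ via the first-generation decomposition, condition on $\mathscr G_{\Lambda^T}$, and apply Theorem \ref{thm:lcabranching}; the base case is likewise handled by noting $N^T=0$ iff $L^T=T-\tau$. You supply more detail than the paper on the factoring of the ultrametric constraint, the relabeling bookkeeping, and the $\mathscr G_{\Lambda^T}$-measurability of $L^T$, $N^T$, $\tau^T_k$, $\alpha^T_k$, all of which is consistent with what the paper leaves implicit.
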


When offspring are symmetric and branch lengths admit continuous Lebesgue densities, this recursive characterization yields an explicit density for $\G$.  
For $u\in\bar\Gamma$, write $\mathring u = \{x\in u \mid N(x)\geq 1\}$ for its internal nodes, and define the projection of ultrametric branch lengths of $u$ to its internal branch lengths,
\begin{align*}
    \Phi_{u, t} \colon \mathcal C_{u, t} \to (0,\infty)^{\mathring u}, \qquad (\ell_x)_{x\in u} \mapsto (\ell_x)_{x\in \mathring u}.
\end{align*}
We then introduce a reference measure on $\bar\Omega^T$ by
\begin{align*}
    \mathcal M_\tau(A \times \{u\} \times B)
    = \delta_\tau(A)\, m^{\mathring u}\bigl(\Phi_{u, \tau}(B \cap \mathcal C_{u, \tau})\bigr),
\end{align*}
for $A \in \mathscr B_{[0,T)}$, $B \in \mathscr B_{(0,\infty)}^{\otimes u}$, with $\delta_\tau$ the Dirac measure and $m^{\mathring u}$ the $|\mathring u|$-dimensional Lebesgue measure.

\begin{corollary} \label{thm:genDensity}
    If $\mu_\tau$ admits a continuous Lebesgue density $g_\tau$ for all $\tau \in [0,T)$, then the distribution of $\G$ under $\Q_\tau$ is absolutely continuous with respect to $\mathcal M_\tau$, with density
    \begin{align*}
        \frac{\dd \G(\Q_\tau)}{\dd \mathcal M_\tau}(u,\ell)
        = \prod_{x \in \mathring u} g^T_{\tau_x}(\ell_x)\, \nu^T_{\tau_x,\ell_x}(n_x) 
          \prod_{x \in \breve u} \bar G^T_{\tau_x}(\ell_x),
    \end{align*}
    for $u \in \bar\Gamma$, $\ell \in \mathcal C_u$, where $\tau_x = \tau_{\mf x} + \ell_x$, $\tau_0 = \tau$, and $n_x = N(\theta_k(u))$.
\end{corollary}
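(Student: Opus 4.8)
The plan is to prove the density formula by induction on the (finite) tree $u\in\bar\Gamma$, using the recursive characterization of the genealogy's law from Proposition \ref{thm:recurDist} as the engine, and the genealogical branching property (Theorem \ref{thm:lcabranching}) to justify that the first-generation sub-genealogies contribute independent factors. The statement to verify, written in integrated form, is that for any measurable rectangle $F$ of the form appearing in Proposition \ref{thm:recurDist} with underlying Neveu tree $u$ and $A\subseteq[0,T)\times[0,\infty)$, $B=\bigtimes_{x\in u}B_x$, and any $(\tau,\alpha)\in A$,
\begin{align*}
    \Qx{\G\in F}{\tau,\alpha}
    = \int_{\Phi_{u,\tau}(B\cap\mathcal C_{u,\tau})}
      \prod_{x\in\mathring u} g^T_{\tau_x}(\ell_x)\,\nu^T_{\tau_x,\ell_x}(n_x)
      \prod_{x\in\breve u}\bar G^T_{\tau_x}(\ell_x)\;\dd m^{\mathring u}(\ell),
\end{align*}
with $\tau_x=\tau_{\mf x}+\ell_x$, $\tau_0=\tau$, $n_x=N(\theta_x(u))$; since the reference measure $\mathcal M_\tau$ assigns a Dirac mass at $\tau$ in the first coordinate and Lebesgue mass on the internal branch lengths, establishing this identity for all such rectangles identifies the Radon--Nikodym derivative.

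First I would treat the base case $u=\{0\}$: here $\mathring u=\emptyset$, $\breve u=\{0\}$, so the claimed density is just $\bar G^T_\tau(\ell_0)$ evaluated at the unique point $\ell_0=T-\tau$ forced by the ultrametric constraint, and the right-hand side collapses to $\bar G^T_\tau(T-\tau)$, matching the second display of Proposition \ref{thm:recurDist}. For the inductive step with $|u|>1$ and $n=N(u)\geq 2$, I would apply Proposition \ref{thm:recurDist} to rewrite $\Qx{\G\in F}{\tau,\alpha}$ as the $\Q_{\tau,\alpha}$-expectation of $\prod_{k=1}^n\Qx{\G_k\in F^{(k)}}{\tau_k^T,\alpha_k^T}$ on the event $\{L^T\in B_0,\,N^T=n\}$. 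In the symmetric case $\alpha_k^T=0$ and $\tau_k^T=\tau+L^T$ for every $k$, so the inner probabilities depend on the randomness only through $L^T$; applying the inductive hypothesis to each $\Qx{\G_k\in F^{(k)}}{\tau+L^T,0}$ expresses it as an integral over the internal branch lengths of the subtree $\theta_k(u)$, with integrand $\prod_{y\in\mathring{\theta_k(u)}}g^T_{\cdot}(\cdot)\nu^T_{\cdot,\cdot}(\cdot)\prod_{y\in\breve{\theta_k(u)}}\bar G^T_{\cdot}(\cdot)$, the subtree birth times being anchored at $\tau+L^T$. Then I would compute the remaining $\Q_{\tau,\alpha}$-expectation over $L^T$ and the event $N^T=n$ by conditioning on $L^T=\ell_0$: Proposition \ref{thm:genBranchLength} supplies the density $g^T_\tau(\ell_0)$ of $L^T$ on $(0,T-\tau)$ (note $\breve u\ni 0$ is impossible here since $n\geq 2$, so the relevant branch is non-censored and lies strictly inside $(0,T-\tau)$), and $h^T_{\tau,\ell_0}$ together with symmetry of offspring gives $\Qc{N^T=n}{L^T=\ell_0}{\tau}=\nu^T_{\tau,\ell_0}(n)$. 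Re-indexing the product over $\mathring u=\{0\}\cup\bigcup_k 1^{?}\!\mathring{\theta_k(u)}$ (under the Ulam--Harris relabeling) and over $\breve u=\bigcup_k\breve{\theta_k(u)}$, and matching $\tau_{ky}=\tau_k^T+(\text{branch lengths along }y)=(\tau+\ell_0)+\dots$, I would recognize the resulting iterated integral as exactly the claimed integral over $\Phi_{u,\tau}(B\cap\mathcal C_{u,\tau})$, via Fubini/Tonelli and the change of variables that identifies $\mathcal C_{u,\tau}$ with the product of the subtree constraint sets once $\ell_0$ is fixed.

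The main obstacle I anticipate is bookkeeping rather than any deep analytic point: one must carefully align the Ulam--Harris relabeling implicit in the genealogy construction (the map $\rf_{\Lambda^T}$ in Eq. \ref{eq:genealogy}) with the decomposition $\mathring u=\{0\}\sqcup\bigsqcup_k k\,\mathring{\theta_k(u)}$ and $\breve u=\bigsqcup_k k\,\breve{\theta_k(u)}$, and verify that under this relabeling the birth-time recursion $\tau_x=\tau_{\mf x}+\ell_x$ is consistent between $u$ and its subtrees (the subtree $\theta_k(u)$ is ``restarted'' at birth time $\tau+\ell_0$, which is precisely $\tau_k^T$). A secondary subtlety is the interplay between the ultrametric constraint sets: for fixed $\ell_0$, $\mathcal C_{u,\tau}$ restricted to the subtree coordinates is the product $\bigtimes_k\mathcal C_{\theta_k(u),\tau+\ell_0}$, so that $\Phi_{u,\tau}$ factors accordingly and the Lebesgue measures multiply correctly; I would spell this out since it is what makes the recursion on densities (as opposed to measures) close. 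Finally, one should note measurability of $\ell_0\mapsto\Qx{\G_k\in F^{(k)}}{\tau+\ell_0,0}$, which follows from the inductive formula exhibiting it as an integral with jointly measurable integrand (measurability of $(\tau,\ell)\mapsto g^T_\tau(\ell)$, $\nu^T_{\tau,\ell}$, $\bar G^T_\tau(\ell)$ being inherited from the standing regularity assumptions and Proposition \ref{thm:genBranchLength}), so that the outer integral is well-defined.
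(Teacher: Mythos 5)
Your proposal is correct and follows essentially the same route as the paper: both use Proposition \ref{thm:recurDist} as the recursion engine, unfold the outer expectation over $(L^T, N^T)$ into an integral against the density $g^T_\tau(\ell_0)$ from Proposition \ref{thm:genBranchLength} times $\nu^T_{\tau,\ell_0}(n)$, and iterate down to the leaves where the factor $\bar G^T_{\tau_x}(T-\tau_x)$ appears. Your formal induction with an explicit base case, the remark that $n\geq 2$ forces $L^T$ into the non-censored range where the Lebesgue density exists, and the explicit factorization of $\mathcal C_{u,\tau}$ into subtree constraint sets are just more careful bookkeeping of steps the paper's proof performs implicitly.
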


\subsection{Simulating a genealogy}\label{sec:sim-genealogy}
A further consequence of the genealogical branching property is that genealogies can be simulated directly without generating the entire underlying branching tree. The idea is to construct the genealogy recursively: we simulate the root branch and its offspring, and for each branch decide whether it becomes the least common ancestor of extant descendants by thinning offspring according to survival probabilities $1 - p_{\tau,\alpha}^{0}(T)$. If the branch is indeed the least common ancestor, we continue recursively with its surviving offspring subtrees. 

This procedure applies to general asymmetric Sevast'yanov branching trees, provided one can sample from the branch-length law $\mu_\ta$ and offspring law $\nu_{\tal}$, and compute extinction probabilities $p_\ta^0(T)$ for all $\tal \geq 0$. The algorithm below summarizes the simulation scheme.
\begin{algorithm}[H]
\caption{Simulation of an asymmetric genealogy}
\begin{algorithmic}[1]
  \State Initialize $L^T \gets 0$, $(\tau_0, \alpha_0) \gets (\tau,\alpha)$
  \State Sample $L \sim \mu_{\tau_0,\alpha_0}$
  \State Let $L^T \gets L^T + L$
  \If{$L^T \ge T - \tau_0$}
    \State \Return $(\tau_0, \alpha_0, \{0\}, T - \tau_0)$ \Comment{no genealogical branching before $T$}
  \EndIf

  \State Sample $N \sim \nu_{\tau_0,\alpha_0,L}$
  \If{$N = 0$}
    \State Restart from Step 1 \Comment{branching tree goes extinct}
  \EndIf

  \State Draw independently:
  \Statex \hspace{1.2em} $\overline N \sim \text{Bernoulli}\!\bigl(1-p^0_{\tau_0+L,\,\alpha_0+L}(T)\bigr)$
  \Statex \hspace{1.2em} $\widetilde N \sim \text{Binomial}\!\bigl(N-1,\,1-p^0_{\tau_0+L,\,0}(T)\bigr)$
  \State Let $S \gets \overline N + \widetilde N$ \Comment{offspring with extant progeny}

  \If{$S = 0$}
    \State Restart from Step 1 \Comment{branching tree goes extinct}
  \ElsIf{$S = 1$}
    \If{$\overline N = 1$}
      \State $(\tau_0, \alpha_0) \gets (\tau_0 + L,\, \alpha_0+L)$
    \Else
      \State $(\tau_0, \alpha_0) \gets (\tau_0 + L,\, 0)$
    \EndIf
    \State Go to Step 2 \Comment{branching event invisible in genealogy}
  \ElsIf{$S \ge 2$}
    \State Create root edge of length $L^T$ with $S$ offspring
    \For{each offspring}
      \If{$\overline N = 1$}
        \State Rank-1 sub-genealogy starts from $(\tau_0+L,\,\alpha_0+L)$
      \EndIf
      \For{each of the $\widetilde N$ subtrees}
        \State Sub-genealogy starts from $(\tau_0+L,\,0)$
      \EndFor
    \EndFor
  \EndIf
\end{algorithmic}
\end{algorithm}

\section{Proofs and technical results} \label{sec:proof}

\subsection{Asymmetric Sevast'yanov processes as CMJ processes}
    \begin{proof}[Proof of Proposition \ref{thm:ASPisCMJ}]
    Every branch $y \in \gamma$ belongs to a unique individual, that is, we may write $y = x1^k$ for some $x \in \gamma$ with $\rf x \neq 1$ and some $k \geq 0$. Regrouping the terms of Eq. \ref{eq:sspDef} by the individual containing each branch gives
    \begin{align*}
        Z(t) &= \sum_{x \in \gamma} \ind_{(0, L_x]}(t-\tau_x) = \sum_{\substack{x \in \gamma \\ \rf x \neq 1}} \sum_{k:\,x1^k \in \gamma} \ind_{(0, L_{x1^k}]}(t-\tau_{x1^k}).
    \end{align*}
    The intervals $(\tau_{x1^k}, \tau_{x1^k}+L_{x1^k}]$ are disjoint, hence
    \begin{align*}
        \bigcup_{k:\,1^k \in \gamma} (\tau_{x1^k}, \tau_{x1^k} + L_{x1^k}] = \left(\tau_x, \tau_x + \sum_{k:\,1^k \in \gamma}L_x\right]
    \end{align*}
    The Sevast'yanov process may thus be written as    \begin{align*}
        Z(t) &= \sum_{\substack{x \in \gamma \\ \rf x \neq 1}} \ind_{\left(0, \sum_{k:\,1^k \in \gamma}L_{x1^k}\right]}(t - \tau_{x}) \\
        &= \ind_{\left(0, \sum_{k:\,1^k \in \gamma}L_{1^k}\right]}(t - \tau) + \sum_{\substack{x \in \gamma \\ \rf x > 1}} \ind_{\left(0, \sum_{k:\,1^k \in \gamma}L_{x1^k}\right]}(t - \tau_{x}) \\
        &= \ind_{\left(0, \sum_{k:\,1^k \in \gamma}L_{1^k}\right]}(t - \tau) + \sum_{\substack{k\geq 0, j\geq 2: \\ 1^kj \in \gamma}} Z_{1^k j}(t)
    \end{align*}
    where $1^k j \in \gamma$ is the non-rank $1$ offspring of the individual initiated by the root, that is, the new individuals born by the root individual.

    The process is thus an inhomogeneous CMJ process \cite{pakkanen_unifying_2023} with root lifespan $\sum_{k:\,1^k \in \gamma} L_{1^k}$, which, if the root is born at time $\tau \geq 0$, has law
    \begin{align*}
        B \mapsto \Px{\sum_{k:\,1^k \in \gamma} L_{1^k} \in B}{\tau, 0}, \quad B \in \mathscr B_{(0, \infty)},
    \end{align*}
    and with offspring counting process having jumps (possibly of size greater than one) at the birth times of non-rank 1 offspring of the root individual 
    \begin{align*}
        \ell \mapsto \sum_{\substack{k\geq 0, j\geq 2: \\ 1^kj \in \gamma}} \ind_{(0, \ell]}(\tau_{1^{k}j} - \tau), \quad \ell\geq 0,
    \end{align*}
    as $Z_{1^k j}(t)$ can only be non-zero for $t > \tau_{1^k j}$.
\end{proof}

\subsection{Generating functions of (a)symmetric Sevast'yanov processes}
\subsubsection*{The offspring generating function is Lipschitz}
\begin{lemma}\label{thm:hLips}
    For any $\tau, \alpha, \ell\geq 0$, the generating function $\tilde h_\tal$ of $(\overline N, \widetilde N)$ is Lipschitz on $[0, 1]^2$ with Lipschitz constant $M = \sup_{\tau, \alpha, \ell} m_\tal < \infty$, that is,
    \begin{align*}
        |\tilde h_\tal(r_1, s_1) - \tilde h_\tal(r_2, s_2)| \leq M (|r_1 - r_2| + |s_1 - s_2|)
    \end{align*}
    for any $(r_1, s_1), (r_2, s_2) \in [0, 1]^2$.
\end{lemma}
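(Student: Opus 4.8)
The plan is to prove the Lipschitz bound on the two-dimensional generating function $\tilde h_\tal$ directly from its explicit power-series representation
\begin{align*}
    \tilde h_\tal(r, s) = \nu_\tal(0) + r \sum_{n \geq 1} s^{n-1} \nu_\tal(n),
\end{align*}
using that the coefficients are nonnegative and that $\sum_{n\geq 1} n \,\nu_\tal(n) = m_\tal \leq M$ by Assumption \ref{ass:regul}(b). The first step is to reduce the two-variable Lipschitz estimate to two one-variable estimates via the triangle inequality: writing
\begin{align*}
    \tilde h_\tal(r_1, s_1) - \tilde h_\tal(r_2, s_2) = \bigl(\tilde h_\tal(r_1, s_1) - \tilde h_\tal(r_2, s_1)\bigr) + \bigl(\tilde h_\tal(r_2, s_1) - \tilde h_\tal(r_2, s_2)\bigr),
\end{align*}
it suffices to bound the difference in $r$ (with $s$ fixed) and the difference in $s$ (with $r$ fixed) each by $M$ times the corresponding coordinate distance.

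For the $r$-difference I would note that $\tilde h_\tal$ is affine in $r$, so $\tilde h_\tal(r_1, s_1) - \tilde h_\tal(r_2, s_1) = (r_1 - r_2)\sum_{n\geq 1} s_1^{n-1}\nu_\tal(n)$, and since $s_1 \in [0,1]$ the sum is bounded by $\sum_{n\geq 1}\nu_\tal(n) \leq 1 \leq M$ (assuming $M \geq 1$; if not one can simply replace $M$ by $\max(M,1)$, or observe that $\sum_{n\geq 1}\nu_\tal(n) \leq \sum_{n\geq 1} n\,\nu_\tal(n) = m_\tal \leq M$, which is cleaner and avoids any case distinction). For the $s$-difference, with $r_2$ fixed, I would estimate
\begin{align*}
    |\tilde h_\tal(r_2, s_1) - \tilde h_\tal(r_2, s_2)| = r_2 \left| \sum_{n\geq 1} (s_1^{n-1} - s_2^{n-1})\nu_\tal(n) \right| \leq \sum_{n\geq 1} |s_1^{n-1} - s_2^{n-1}|\,\nu_\tal(n),
\end{align*}
and then use the elementary inequality $|s_1^{k} - s_2^{k}| \leq k\,|s_1 - s_2|$ for $s_1, s_2 \in [0,1]$ (which follows from the mean value theorem, or from the factorization $s_1^k - s_2^k = (s_1 - s_2)\sum_{j=0}^{k-1} s_1^j s_2^{k-1-j}$ with each of the $k$ summands in $[0,1]$). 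Applying this with $k = n-1$ gives the bound $|s_1 - s_2|\sum_{n\geq 1}(n-1)\nu_\tal(n) \leq |s_1 - s_2|\,m_\tal \leq M\,|s_1 - s_2|$.

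Combining the two estimates yields $|\tilde h_\tal(r_1, s_1) - \tilde h_\tal(r_2, s_2)| \leq M(|r_1 - r_2| + |s_1 - s_2|)$, with the constant uniform in $\tau, \alpha, \ell$ because the bound $m_\tal \leq M$ is uniform. There is no real obstacle here; the only points requiring a line of justification are the interchange of absolute value and infinite sum (valid since all terms are nonnegative, so the series converges absolutely and the triangle inequality for series applies) and the elementary power inequality $|s_1^k - s_2^k| \leq k|s_1-s_2|$ on $[0,1]$. I would state the latter as a one-line lemma or inline remark with the telescoping-factorization proof, as it is the computational heart of the argument.
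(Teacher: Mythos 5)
Your proof is correct and follows essentially the same route as the paper's: both split the two-variable difference into an $r$-increment and an $s$-increment via the triangle inequality and bound each by the partial means $\overline m_\tal$ and $\widetilde m_\tal$, whose sum is $m_\tal \leq M$. The only difference is that you obtain the one-variable Lipschitz bounds by direct series manipulation (affineness in $r$, and the telescoping inequality $|s_1^{k}-s_2^{k}|\leq k\,|s_1-s_2|$ on $[0,1]$), whereas the paper applies the mean value theorem to the partial derivatives; both are valid and yield the same constant.
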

\begin{proof}
    We start by writing
    \begin{align*}
        |\tilde h_\tal(r_1, s_1) - \tilde h_\tal(r_2, s_2)| &\leq |\tilde h_\tal(r_1, s_1) - \tilde h_\tal(r_2, s_1)| \\ &\qquad + |\tilde h_\tal(r_2, s_1) - \tilde h_\tal(r_2, s_2)| 
    \end{align*}

    The conditional distribution of $N =  \overline N + \widetilde N$, given $L=\ell$, that is, $\nu_\tal$, has finite mean, so each partial derivative of $(r, s) \mapsto \tilde h_\tal(r, s)$ must be continuous on $[0, 1]^2$. We now apply the mean value theorem on $r \mapsto \tilde h_\tal(r, s_1)$ for fixed $s_1\in [0,1]$ and on $s \mapsto \tilde h_\tal(r_2, s)$ for fixed $r_2\in [0,1]$, to see that
    \begin{align*}
        &|\tilde h_\tal(r_1, s_1) - \tilde h_\tal(r_2, s_1)| \leq \sup_{r\in [0, 1]} |\partial_r \tilde h_\tal(r, s_1) | \, |r_1-r_2| \\
        &|\tilde h_\tal(r_2, s_1) - \tilde h_\tal(r_2, s_2)| \leq \sup_{s\in [0, 1]} |\partial_s \tilde h_\tal(r_2, s) | \, |s_1-s_2|.
    \end{align*}

   The partial derivatives of the generating functions are bounded,
    \begin{align*}
        &\partial_r \tilde h_\tal(r, s) = \sum_{n=1}^\infty s^{n-1} \nu_\tal(n) \leq \Pc{\overline N}{L=\ell}{\ta} = \overline m_\tal\\
        &\partial_s \tilde h_\tal(r, s) = r\sum_{n=2}^\infty (n-1)s^{n-2} \nu_\tal(n) \leq \Pc{\widetilde N}{L=\ell}{\ta} = \widetilde m_\tal,
    \end{align*}
    so we get a Lipschitz condition for $\tilde h_\tal$,
    \begin{align*}
        |\tilde h_\tal(r_1, s_1) - \tilde h_\tal(r_2, s_2)| &\leq \overline m_\tal \,|r_1-r_2| + \widetilde m_\tal\, |s_1-s_2| \\
        &\leq (\overline m_\tal + \widetilde m_\tal) (|r_1-r_2| + |s_1-s_2|) \\
        &= m_\tal (|r_1-r_2| + |s_1-s_2|).
    \end{align*}
    Assumption \ref{ass:regul} (b) gives us that $M = \sup_{\tal} m_\tal <\infty$, so we have the stated Lipschitz continuity property,
    \begin{align*}
        |\tilde h_\tal(r_1, s_1) - \tilde h_\tal(r_2, s_2)| \leq M (|r_1 - r_2| + |s_1 - s_2|).
    \end{align*}
\end{proof}

\subsubsection{Generating function of the reduced Sevast'yanov processes}
\begin{proof}[Proof of Theorem \ref{thm:GenFunRASP}]
    We start by showing that $F^T_\ta(s; t)$ satisfies the integral equation on $\Delta^T$, hence also showing that a solution does exist. So let $(s, t, \ta) \in \Delta^T$ and partition the generating function according to whether or not the root branch has died,
    \begin{align} \label{eq:gfPartitioned}
        F^{T}_\ta(s;t) = \Px{s^{Z^{T}(t)};L \geq t - \tau}{\ta} + \Px{s^{Z^{T}(t)};L < t - \tau}{\ta}.
    \end{align}

    On $(L^T \geq t - \tau)$ only the root is alive, and in particular only the random characteristic of the root can contribute to the process, so the root will be counted exactly if $t > \tau$ and it leaves extant progeny at time $T$. We call this first part of the generating function $S_\ta(s; t)$,
    \begin{align*}
        S_\ta(s; t) &= \Px{s^{Z^{T}(t)};L \geq t - \tau}{\ta} \\[1em]
        &= \Px{s^{\ind_{(0, L]}(t-\tau)\ind_{(Z(T)>0)}}; L \geq t - \tau}{\tau, \alpha} \\[1em]
        &= s^{\ind_{(t>\tau)}}\Px{Z(T) > 0, L \geq t - \tau}{\ta} + \Px{Z(T) = 0, L \geq t - \tau}{\ta} \\[1em]
        &= s^{\ind_{(t>\tau)}} \mu_\ta([t-\tau, \infty))+  \left(1 - s^{\ind_{(t>\tau)}}\right) \Px{Z(T) = 0, L \geq t - \tau}{\ta}.
    \end{align*}
    
    The principle of first generation (Lemma \ref{thm:princFirstGen}) states that
    \begin{align*}
        Z(T) = \ind_{(0, L]}(T - \tau) + \sum_{k = 1}^N Z_k(T).
    \end{align*}
    This is a sum of non-negative integers, so if $Z(T) = 0$, each term must be zero,
    \begin{align*}
        (Z(T) = 0) = (L < T - \tau) \cap \bigcap_{k=1}^N (Z_k(T) = 0).
    \end{align*}
    Now use the branching property (Proposition \ref{thm:StrongBranching}) to get
    \begin{align*}
        \Px{Z(T) = 0, L\geq t-\tau}{\ta} &=\Px{\Pc{\bigcap_{k = 1}^N (Z_k(T)=0)}{\mathscr F_{1:N}}{\ta};\, L \in [t-\tau, T - \tau)}{\ta} \\[0.5em]
        &= \Px{\prod_{k=1}^N p^0_{\tau_x, \alpha_x}(T);\, L \in [t-\tau, T - \tau)}{\ta} \\[0.5em]
        &= \Px{p^0_{\tau + L, \alpha_1}(T)^{\overline N} p^0_{\tau + L, 0}(T)^{\widetilde N};\, L \in [t-\tau, T - \tau)}{\ta} \\[0.5em]
        &= \Px{\Pc{p^0_{\tau + L, \alpha_1}(T)^{\overline N} p^0_{\tau + L, 0}(T)^{\widetilde N}}{L}{\ta};\, L \in [t-\tau, T - \tau)}{\ta} \\[0.5em]
        &= \Px{\tilde h_{\ta, L}\left(p^0_{\tau + L, \alpha_1}(T), p^0_{\tau + L, 0}(T)\right);\, L \in [t-\tau, T - \tau)}{\ta},
    \end{align*}
    where we after conditioning on $L$, recognize the inner expectation as the conditional generating function $\tilde h_\tal$ applied to the shifted extinction probabilities. So, under asymmetric age assignments, $\alpha_1 = \alpha + L$, we have
    \begin{align*}
        S_\ta(s;t) &= s^{\ind_{(t>\tau)}} \mu_\ta([t-\tau, \infty)) \\
         &\quad\: + \left(1 - s^{\ind_{(t>\tau)}}\right) \int_{[t-\tau, T - \tau)} \tilde h_\tal\left(p^0_{\tau+\ell, \alpha+\ell}(T), p^0_{\tau+\ell, 0}(T)\right) \di \mu_\ta(\ell),
    \end{align*}
    and under symmetric age assignments, $\alpha_1 = 0$, 
    \begin{align*}
        S_\tau(s;t) &= s^{\ind_{(t>\tau)}} \mu_\tau([t-\tau, \infty)) + \left(1 - s^{\ind_{(t>\tau)}}\right) \int_{[t-\tau, T - \tau)} h_{\tau, \ell}\left(p^0_{\tau+\ell}(T)\right) \di \mu_\tau(\ell).
    \end{align*}

    The second term of Eq. \ref{eq:gfPartitioned}, working on $(L < t- \tau)$, is handled similarly. As the root branch has died on this event and thus will not be counted, the principle of first generation (Lemma \ref{thm:princFirstGen}) and the branching property (Proposition \ref{thm:StrongBranching}) yield
    \begin{align*}
        \Px{s^{Z^{T}(t)};L < t - \tau}{\ta} &= \Px{s^{\sum_{k = 1}^N Z_k^{T}(t)};\, L < t - \tau}{\ta} \\
        &= \Px{\prod_{k = 1}^N s^{Z_k^{T}(t)};\, L < t - \tau}{\ta} \\
        &= \Px{\Pc{\prod_{k = 1}^N s^{Z_k^{T}(t)}}{\mathscr F_{1:N}}{\ta};\, L < t - \tau}{\ta} \\
        &= \Px{\prod_{k = 1}^N \Px{s^{Z^{T}(t)}}{\tau_x, \alpha_x};\, L < t - \tau}{\ta} \\
        &= \Px{\prod_{k = 1}^N F^T_{\tau_x, \alpha_x}(s; t) ;\, L < t - \tau}{\ta} \\
        &= \Px{\tilde h_{\ta, L}\left(F^T_{\tau + L, \alpha_1}(s; t), F^T_{\tau + L, 0}(s; t) \right) ;\, L < t - \tau}{\ta}.
    \end{align*}
    Under asymmetric age assignments, $\alpha_1 = \alpha + L$, and the full generating function becomes
    \begin{align*}
        F^T_\ta(s; t) = S_\ta(s; t) + \int_{[0, t - \tau)} \tilde h_\tal\left(F^{T}_{\tau + \ell, \alpha + \ell}(s;t), F^{T}_{\tau + \ell, 0}(s;t)\right) \di \mu_\ta(\ell),
    \end{align*}
    and under symmetric age assignments, $\alpha_1 = 0$, and
    \begin{align*}
        F^{T}_\tau(s;t) &= S_\tau(s;t)  +\int_{[0, t - \tau)} h_{\tau, \ell}\left(F^{T}_{\tau + \ell}(s;t)\right) \di \mu_\tau(\ell).
    \end{align*}

    We now turn to showing that $F^T_\ta(s; t)$ is the unique bounded solution to this integral equation. This is facilitated by introducing a reparameterization of the generating function: for $u \in [0, t]$, define
    \begin{align*}
        W(u, \alpha) = F^T_{t - u, \alpha}(s; t),
    \end{align*}
    where the variables that are constant in the equation are suppressed. $W(u, \alpha)$ satisfies the reparametrized integral equation
    \begin{align} \label{eq:inteqRepar}
        W(u, \alpha) = S_{t - u, \alpha}(s; t) + \int_{[0, u)} \tilde h_{t - u, \alpha, \ell}\left( W(u - \ell, \alpha + \ell), W(u - \ell, 0)  \right) \di \mu_\ta(\ell).
    \end{align}
    We recover the original parametrization as $F_\ta^T(s; t, a) = W(t - \tau, \alpha)$. Hence, proving that there exists only one bounded solution of the reparametrized integral equation also shows that $F_\ta^T(s; t, a)$ is the unique solution of the original integral equation.

    Assume $W$ and $W'$ are measurable bounded functions that solve Eq. \ref{eq:inteqRepar}. Consider their absolute difference,
    \begin{align*}
        \delta(u, \alpha) = |W(u, \alpha) - W'(u, \alpha)|,
    \end{align*}
    which is non-negative, bounded, and measurable. Using Lemma \ref{thm:hLips} and Assumption \ref{ass:regul} we might bound $\delta$ as
    \begin{align*}
        \delta(u, \alpha) &\leq \int_{[0, u)} \Big|\tilde h_{t-u, \alpha, \ell}\left(W(u-\ell, \alpha + \ell), W(u-\ell, 0)\right) \\ &\qquad\qquad - \tilde h_{t-u, \alpha, \ell}\left(W'(u-\ell, \alpha + \ell), W'(u-\ell, 0)\right)\Big| \di \mu_{t-u, \alpha}(\ell) \\
        &\leq M \int_{[0, u)} \big|W(u-\ell, \alpha + \ell) - W'(u-\ell, \alpha + \ell)\big| \\&\qquad\qquad + \big|W(u-\ell, 0) - W'(u-\ell, 0)\big| \di \mu_{t-u, \alpha}(\ell) \\
        &= M \int_{[0, u)} \delta(u-\ell, \alpha+\ell) + \delta(u-\ell, 0) \di \mu_{t-u, \alpha}(\ell)  \\
        &= M \int_{[0, u)} \left(\delta(u-\ell, \alpha+\ell) + \delta(u-\ell, 0)\right) \frac{\dd \mu_{t-u, \alpha}}{\dd \xi}(\ell) \di \xi(\ell) \\
        &\leq MC \int_{[0, u)} \delta(u-\ell, \alpha+\ell) + \delta(u-\ell, 0) \di \xi(\ell).
    \end{align*}

    With $\overline \delta(u) = \sup_{\alpha \geq 0} \delta(u, \alpha)$, which again is non-negative, bounded, and measurable, we have
    \begin{align*}
        \delta(u, \alpha) \leq 2MC \int_{[0, u)} \overline \delta(u - \ell) \di \xi(\ell) = 2MC \int_{[0, u)} \overline \delta \di \xi.
    \end{align*}
    The right-hand side does not depend on $\alpha$, so is also a bound on $\overline \delta$, reading as,
    \begin{align*}
        \overline \delta(u) \leq 2MC \int_{[0, u)} \overline \delta \di \xi.
    \end{align*}

    Since $\xi$ is a locally finite Borel measure, Grönwalls Inequality \cite[Thm. 5.1, Appx. 5]{ethier_markov_1986}, reveals that,
    \begin{align*}
        \delta(u, \alpha) \leq \overline \delta(u) \leq 0,
    \end{align*}
    and since $\delta$ is non-negative, we have $\delta \equiv 0$. In conclusion, there can only be one solution to Eq. \ref{eq:inteqRepar}.
\end{proof}

\subsection{Differentiability of generating functions}
\begin{proof}[Proof of Proposition \ref{thm:inteqDifferentiable}]
    Let $s \in [0, 1]$. For ease of notation, we suppress the dependence of $F_\ta^T$ on $s$, i.e $F_\ta^T(t) = F_\ta^T(s; t)$. We start by proving continuity of $t \mapsto F_\ta^T(t)$.

    Consider the Banach space of continuous bounded collections of functions over an interval $I \subseteq (0, T]$,
    \begin{align*}
        \mathcal B(I) = \left\{f = (f_\ta)_{\tau \in I, \alpha\geq 0} \mid f_\ta \in C\left((\tau, T]\right), ||f||_{\infty} = \sup_{\tau \in I ,\alpha \geq 0} \sup_{t \in (\tau, T]} |f(t)| < \infty \right\}.
    \end{align*}
    Let $\delta \in (0, T]$ be such that $2 MC \delta < 1$ (recall Assumption \ref{ass:regul}) and let $I_0 = (T - \delta, T]$. Define the operator $\Phi_0$ on $\mathcal B(I_0)$ by
    \begin{align*}
        (\Phi_0 f)_\ta(t) = S_\ta^T(t) + \int_{0}^{t - \tau} \tilde h_\tal\left(f_{\tau + \ell, \alpha + \ell}(t), f_{\tau + \ell, 0}(t)\right) g_\ta(\ell) \di \ell,
    \end{align*}
    which is well defined as $S_\ta^T, \tilde h_\tal$ and $g_\tal$ are continuous and bounded.

    We will now show that $\Phi_0$ is a contraction on $\mathcal B(I_0)$. For this, let $f, f' \in \mathcal B(I_0)$,
    \begin{align*}
        \left| (\Phi_0 f)_\ta - (\Phi_0f')_\ta  \right| &\leq \int_0^{t-\tau} MC \left(|f_{\tau + \ell, \alpha +\ell}(t) - f'_{\tau + \ell, \alpha +\ell}(t)| + |f_{\tau + \ell, 0}(t) - f'_{\tau + \ell, 0}(t)|\right) \di \ell \\
         &\leq 2MC \delta \, ||f - f'||_\infty
    \end{align*}
    by Assumption \ref{ass:regul} and Lemma \ref{thm:hLips}, and since $t - \tau \leq \delta$ for functions defined on $I_0$. Due to our choice of $\delta$,  $\Phi_0$ is a contraction on $\mathcal B(I_0)$. The Banach fixed point theorem gives that there is a unique $f^{(0)} \in \mathcal B(I_0)$ such that $\Phi_0 f^{(0)} = f^{(0)}$. Since $F_\ta^T$ is the unique bounded solution of the integral equation, we have $F_\ta^T = f_\ta^{(0)}$ on $(\tau, T]$ for $\tau \in I_0$. In conclusion, $t\mapsto F_\ta^T(t)$ is continuous on $(\tau, T]$ for $\tau \in I_0$.

    Let $I_1 = (T-2\delta,\,T-\delta]$. For $\tau \in I_1$ and $t \in (\tau,T]$, split the integral in the integral equation for $F_\ta^T$ in two,
    \begin{align*}
    F_\ta^T(t) &= S_\ta^T(t) 
    + \int_0^{\min\{\delta,\,t-\tau\}} \tilde h_{\tau,\alpha,\ell}\!\left(F^T_{\tau+\ell,\alpha+\ell}(t),\,F^T_{\tau+\ell,0}(t)\right) g_{\tau,\alpha}(\ell)\,d\ell \\
    &\quad + \int_\delta^{t-\tau} \tilde h_{\tau,\alpha,\ell}\!\left(F^T_{\tau+\ell,\alpha+\ell}(t),\,F^T_{\tau+\ell,0}(t)\right) g_{\tau,\alpha}(\ell)\,d\ell .
    \end{align*}
    Since $\tau+\ell \in I_0$, whenever $\ell>\delta$ and $\tau\in I_1$, the second integral only involves the solution $f^{(0)}$. Hence, we have
    \begin{align*}
    F_\ta^T(t) &= B_\ta^T(t) 
    + \int_0^{\min\{\delta, t-\tau\}} \tilde h_{\tau,\alpha,\ell}\left(F^T_{\tau+\ell,\alpha+\ell}(t), F^T_{\tau+\ell,0}(t)\right) g_{\tau,\alpha}(\ell) \di\ell ,
    \end{align*}
    where
    \begin{align*}
    B_\ta^T(t) = S_\ta^T(t) + \int_\delta^{t-\tau} \tilde h_{\tau,\alpha,\ell}\left(f^{(0)}_{\tau+\ell,\alpha+\ell}(t), f^{(0)}_{\tau+\ell,0}(t)\right) g_{\tau,\alpha}(\ell) \di \ell
    \end{align*}
    is continuous and bounded on $(\tau,T]$. Defining the operator $\Phi_1$ on $\mathcal B(I_1)$ by
    \begin{align*}
    (\Phi_1 f)_\ta(t) = B_\ta^T(t) 
    + \int_0^{\min\{\delta, t-\tau\}} \tilde h_{\tau,\alpha,\ell}\left(f_{\tau+\ell,\alpha+\ell}(t),f_{\tau+\ell,0}(t)\right) g_{\tau,\alpha}(\ell) \di\ell ,
    \end{align*}
    the same upper contraction estimate as before shows that $\Phi_1$ is a contraction on $\mathcal B(I_1)$. Hence there exists a unique $f^{(1)} \in \mathcal B(I_1)$ with $\Phi_1 f^{(1)}=f^{(1)}$, and again we must have $f^{(1)}_\ta = F_\ta^T$ on $(\tau,T]$ for all $\tau\in I_1$.
    
    Proceeding inductively, suppose we have constructed continuous and bounded collections $f^{(0)},\dots,f^{(k-1)}$ on $I_0,\dots,I_{k-1}$. For $\tau\in I_k = (T-(k+1)\delta, T-k\delta]$ and $t\in(\tau,T]$, we analogously define the operator $\Phi_k$, which by the same contraction argument as above produces a unique $f^{(k)}\in \mathcal B(I_k)$ coinciding with $F_\tau^T$ on $(\tau,T]$ for all $\tau\in I_k$. This yields collections $f^{(0)},\dots,f^{(m-1)}$, with $m = \lceil T/\delta\rceil$, of continuous and bounded functions coinciding with $(F_\ta^T)_{\tau\in(0,T],\alpha\geq 0}$ on their respective intervals. Since $\bigcup_{k=0}^{m-1} I_k = (0,T]$, we conclude that
    \begin{align*}
        t \mapsto F_\ta^T(t)
    \end{align*}
    is continuous on $(\tau,T]$ for every $\tau \in (0,T]$.

    Fix $t \in (\tau, T]$ and consider the difference quotient
    \begin{align*}
        Q_\ta(\delta) &= \frac{F^T_\ta(t + \delta) - F^T_\ta(t)}{\delta}  \\
        &= \frac{S^T_\ta(t+\delta) - S^T_\ta(t)}{\delta} + \frac{1}{\delta} \int_{t - \tau}^{t +\delta - \tau} \tilde h_\tal\left( F^T_{\tau+\ell, \alpha+\ell}(t + \delta),  F^T_{\tau+\ell, 0}(t + \delta) \right) g_\ta(\ell) \di \ell \\
        &\qquad+ \frac{1}{\delta} \int_0^{t-\tau} \Big(\tilde h_\tal\left( F^T_{\tau+\ell, \alpha+\ell}(t + \delta),  F^T_{\tau+\ell, 0}(t + \delta) \right) \\
        &\qquad\qquad\qquad\qquad- \tilde h_\tal\left( F^T_{\tau+\ell, \alpha+\ell}(t ),  F^T_{\tau+\ell, 0}(t) \right)\!\Big) \, g_\ta(\ell)\di\ell.
    \end{align*}
    Since $t \mapsto S^T_\ta(t)$ is differentiable on $(\tau, T)$ the first term will converge to its derivative when $\delta \to 0$. By the continuity of $\tilde h_\tal$ and $F^T_\ta$, the second term will converge to
    \begin{align*}
        \tilde h_{\ta, t- \tau}(F^T_{t, \alpha + t -\tau}(t+), F^T_{t, 0}(t+)) \, g_\ta(t - \tau) \ifs \delta \downarrow 0 \\
        \tilde h_{\ta, t- \tau}(F^T_{t-, \alpha + t -\tau}(t), F^T_{t-, 0}(t)) \, g_\ta(t - \tau) \ifs \delta \uparrow 0,
    \end{align*}
    but these limits are equal. Using the mean value theorem, the third term is
    \begin{align*}
        \int_0^{t- \tau} \left( \tilde H^{(1)}_\tal(\delta) Q_{\tau+\ell, \alpha+\ell}(\delta) + \tilde H^{(2)}_\tal(\delta) Q_{\tau+\ell, 0}(\delta) \right) g_\ta(\ell) \di \ell,
    \end{align*}
    where $\tilde H^{(i)}$ for $i = 1, 2$ is given by,
    \begin{align*}
        \tilde H_\tal^{(i)}(\delta) = \int_0^1 \tilde h^{(i)}_\tal\left((1 - \theta) \left(F_{\tau + \ell, \alpha+\ell}^T(t), F_{\tau + \ell,0}^T(t)\right) + \theta  \left(F_{\tau + \ell, \alpha+\ell}^T(t + \delta), F_{\tau + \ell, 0}^T(t+\delta)\right)\right) \di \theta,
    \end{align*}
    which by continuous differentiability of $\tilde h_\tal$ and continuity of $F_\ta^T$, converges to
    \begin{align*}
        \tilde H_\tal^{(i)}(\delta) \to \tilde h^{(i)}_\tal\left(F_{\tau + \ell, \alpha+\ell}^T(t), F_{\tau + \ell,0}^T(t)\right)
    \end{align*}
    as $\delta \to 0$. Collecting these, and using dominated convergence, we obtain the integral equation for the derivative, provided it exists,
   \begin{align*}
        \frac{\partial F_\ta^T}{\partial t}(s; t) &= \frac{\partial S^T_\ta}{\partial t}(s; t) + \tilde h_{\tau, \alpha, t - \tau}\left(F^T_{t, \alpha + t - \tau}(s; t+), F_{t, 0}^T(s; t+)\right) g_\ta(t - \tau) \\
        &\quad\: + \int_0^{t - \tau} \Big(\tilde h_\tal^{(1)}\left(F^T_{\tau+\ell, \alpha + \ell}(s; t), F_{\tau+\ell, 0}^T(s; t)\right) \frac{\partial F_{\tau+\ell, \alpha+\ell}^T}{\partial t}(s; t) \\ 
        &\qquad\qquad\quad + \tilde h_\tal^{(2)}\left(F^T_{\tau+\ell, \alpha + \ell}(s; t), F_{\tau+\ell, 0}^T(s; t)\right) \frac{\partial F_{\tau+\ell, 0}^T}{\partial t}(s; t)\Big) g_\ta(\ell) \di \ell.
    \end{align*}
    An argument based on the Banach fixed point theorem, similar to that from above, shows existence and uniqueness of the solution in the set $\mathcal B((\tau, T])$, hence also continuity and boundedness of the derivative on $(\tau, T]$.

    The argument given above can be repeated for higher order $s$-derivatives. In particular, if the density $g_{\tau,\alpha}$ is continuously differentiable $n$ times, then $S_{\tau,\alpha}^T$ is continuously differentiable $n$ times, and the same construction as above yields that $t \mapsto F_{\tau,\alpha}^T(t)$ is continuously differentiable $n + 1$ times on $(\tau,T]$.
\end{proof}

\subsection{Birth-death process generating function}
\begin{proof}[Proof of Proposition \ref{thm:BDGenFun}]
    Let $s \in [0, 1], \tau \geq 0$ and $t > \tau$. The generating function of the simple symmetric Sevast'yanov process, under age-independent inhomogeneous birth-death branch length and offspring distribution satisfies the integral equation (Corollary \ref{thm:GenFunASP}),
    \begin{align*}
        F_\tau(s; t) &= s e^{-R_\tau(t-\tau)} + \int_0^{t-\tau} \frac{\delta_\tau(\ell) + F_{\tau +\ell}(s, t)^2 \beta_\tau(\ell)}{\rho_\tau(\ell)} \rho_\tau(\ell) e^{-R_\tau(\ell)} \di \ell \\
        &= s e^{-R_\tau(t-\tau)} + \int_0^{t-\tau} \delta_\tau(\ell) e^{-R_\tau(\ell)} \di \ell + \int_0^{t-\tau} F_{\tau +\ell}(s, t)^2 \beta_\tau(\ell) e^{-R_\tau(\ell)} \di \ell.
    \end{align*}
    
    As in the proof of Corollary \ref{thm:GenFunASP}, we introduce the reparameterization $W(u) = F_{t-u}(s; t)$  which for $u \in (0, t]$ satisfies the integral equation,
    \begin{align*}
        W(u) &= se^{-R_{t-u}(u)} + \int_0^u \delta_{t-u}(\ell) e^{-R_{t-u}(\ell)} \di \ell + \int_0^u W(u-\ell)^2 \beta_{t-u}(\ell) e^{-R_{t-u}(\ell)}\di \ell \\
        &= Y(u) + \int_0^u W(v)^2 \beta_{t-u}(u-v) e^{-R_{t-u}(u-v)} \di v \\
        &= Y(u) + \int_0^u W(v)^2 \beta(t-v) e^{- R_0(t-v)} e^{R_0(t-u)}   \di v
    \end{align*}
    with
    \begin{align*}
        Y(u) = se^{-R_{t-u}(u)} + \int_0^u \delta_{t-u}(\ell) e^{-R_{t-u}(\ell)} \di \ell,
    \end{align*}
    and where it is used that
    \begin{align*}
    R_\tau(\ell) = \int_0^\ell \rho_\tau(s) \di s = \int_\tau^{\tau+\ell} \rho(s) \di s = R_0(\tau+\ell) - R_0(\tau).
    \end{align*}

    Since both $\beta$ and $\delta$ are continuously differentiable, then $W$ is differentiable, and the Leibniz integral rule implies an integral equation for the derivative of $W$,
    \begin{align*}
        W'(u) &= Y'(u) + W(u)^2 \beta(t-u) + \int_0^u W(v)^2  \beta(t-v) e^{- R_0(t-v)} \frac{\dd}{\dd u} e^{R_0(t-u)} \di v \\
        &= Y'(u) + W(u)^2 \beta(t-u) - \rho(t-u)\int_0^u W(v)^2 \beta(t-v) e^{- R_0(t-v)} e^{R_0(t-u)} \di v.
    \end{align*}
    From the integral equation for $W$, we recognize that
    \begin{align*}
        \int_0^u W(v)^2 \beta(t-v) e^{- R_0(t-v)} e^{R_0(t-u)} \di v = W(u) - Y(v).
    \end{align*}
    We thus obtain a Ricatti-type differential equation for $W$
    \begin{align*}
        W'(u) =  \beta(t-u)W(u)^2 - \rho(t-u) W(u) +  Y'(u) + \rho(t-u) Y(u).
    \end{align*}

    Consider the function $Y$ and its derivative term by term,
    \begin{align*}
        Y(u) = se^{-R_0(t)} e^{R_0(t-u)} +  \int_0^u \delta(t-u +\ell) e^{-R_0(t-u+\ell)} e^{R_0(t-u)} \di \ell = Y_1(u) + Y_2(u).
    \end{align*}
    We immediately see that $Y_1'(u) = -s \rho(t-u) e^{-R_0(t)} e^{R_0(t-u)} = -\rho(t-u) Y_1(u) $. To express the derivative of $Y_2$, we make the substitution $v = t-u+\ell$,
    \begin{align*}
        Y_2(u) = \int_{t-u}^t \delta(v) e^{-R_0(v)}e^{R_0(t-u)} \di \ell,
    \end{align*}
    and use the Leibniz integral rule to get the differential equation,
    \begin{align*}
        Y_2'(u) = \delta(t-u) - \rho(t-u) \int_{t-u}^t \delta(v) e^{-R_0(v)}e^{R_0(t-u)} \di v = \delta(t-u) - \rho(t-u) Y_2(u).
    \end{align*}
    Combining the terms of $Y'$, we get
    \begin{align*}
        Y'(u) = \delta(t-u) - \rho(t-u) Y(u)
    \end{align*}
    which reveals a simpler form of the Ricatti equation,
    \begin{align*}
        W'(u) = \beta(t-u)W(u)^2  - \rho(t-u) W(u) + \delta(t-u).
    \end{align*}

    Note that $W$, as defined through $F_\tau$ is not continuous in $0$, since $\lim_{u \downarrow 0} W(u) = s \neq 1 = F_{t}(s;t)$ (this is an artifact of defining the branching processes with càglàd paths). To solve the differential equation, however, we consider the continuous extension of $W$ at $0$, and thus solve the equation with boundary condition $W(0) = s$,
    \begin{align*}
        W(u) = \frac{A(u) + (1 - A(u) - B(u))s}{1 - B(u) s},
    \end{align*}
    where
    \begin{align*}
        A(u) = 1 - \frac{e^{-D(u)}}{1 + e^{-D(t)} \int_{0}^u \beta(t-v) e^{D(t-v)}\di v} \\ 
        B(u) =1 - \frac{1}{1 + e^{-D(t)} \int_{0}^u \beta(t-v) e^{D(t-v)}\di v}
    \end{align*}
    with $D(u) = \int_0^u \delta(t - v) - \beta(t-v)\di v$. Evaluated in $u = t$ results in the explicit solution to $F_0(s; t)$ for all $t > 0$.
\end{proof}

\subsection{The reduced Sevast'yanov process as a genealogical branching process}
\begin{proof}[Proof of Proposition \ref{thm:branchingGenealogyProc}]
    We use the fundamental decomposition (Eq. \ref{eq:fundaDecomp}) applied to the line $\Lambda^T$ to get a generalization of the principle of first generation (Lemma \ref{thm:princFirstGen}),
    \begin{align*}
        Z^T(t) = \sum_{x \in \gamma} \chi_x^T(t-\tau_x) = \sum_{x \in \kappa_{\Lambda^T}} \ind_{(0, L_x]}(t-\tau_x) \ind_{(Z_x(T)>0)} + \sum_{x \in \Lambda^T} Z_x^T(t).
    \end{align*}
    In $\kappa_{\Lambda^T}$, only the branches in $\anc \lambda^T$ produce extant progeny, so the first sum can be taken only over that set. Additionally, the intervals $(\tau_x, \tau_x + L_x]$ are disjoint for $x\in \anc \lambda^T$, with union $(\tau, \tau + L^T]$, such that
    \begin{align*}
        Z^T(t) = \ind_{(0, L^T]}(t - \tau) + \sum_{x \in \Lambda^T} Z_x^T(t) = (\chi \circ \G)(t-\tau) + \sum_{x \in \Lambda^T} Z_x^T(t).
    \end{align*}
    Applying the same procedure recursively down the genealogy gives
    \begin{align*}
        Z^T(t) = \ind_{(0, L^T]}(t - \tau) + \sum_{ k =  1}^{N^T} (Z \circ \G_{ k}^T)(t) = (Z \circ \G)(t),
    \end{align*}
    and the proof is completed.
\end{proof}

\subsection{The genealogical branching property}
\subsubsection*{Optionality of $\Lambda^T$}
\begin{proof}[Proof of Lemma \ref{thm:optional}]
    Let $I\in \mathcal I$ and let $\omega \in \Omega^T$. We will argue that $\Lambda^T(\omega) \preceq I$ if and only if there is a stopping line $I' \preceq I$ with $|I'| \geq 2$ and $Z_x(T, \omega) > 0$ for all $x \in I'$. The equivalence is trivial if $\Lambda^T(\omega) = \emptyset$, that is, if $Z(T, \omega) = 1$, so we only treat the case $\Lambda^T(\omega) \not= \emptyset$ in the following.

    Assume that $\Lambda^T(\omega) \preceq I$. We can choose $I' = \Lambda^T(\omega)$ and it follows immediately that $I'\preceq I$, that $I'$ contains at least two branches (whenever it is nonempty), and that all its branches have extant progeny. Conversely, assume that we have a stopping line $I' \preceq I$ with at least two branches, and that all of its branches have extant progeny. As no two branches of $I'$ can be directly related, and they all produce extant progeny, we must have that $\Lambda^T(\omega) \preceq I' \preceq I$.

    Writing the equivalence through events we get,
    \begin{align*}
        (\Lambda^T \preceq I) = \bigcup_{\substack{I'\preceq I \\ |I'| \geq 2}} \bigcap_{x \in I'} (Z_x(T) > 0).
    \end{align*}
    Any line $I' \preceq I$ can be partitioned into the ancestral and non-ancestral branches of $I$, and we notice that if $x\in I'$ and $x \preceq I$, then $(Z_x(T)>0) \in \mathscr G_I$, while if $x \not\preceq I$ then $(Z_x(T) > 0) \in \mathscr F_I \subseteq \mathscr G_I$,
    \begin{align*}
        (\Lambda^T \preceq I) = \bigcup_{\substack{I'\preceq I \\ |I'| \geq 2}} \left( \bigcap_{\substack{x \in I' \\ x \preceq I}} \underbrace{(Z_x(T)>0)}_{\in \mathscr G_I} \cap \bigcap_{\substack{x \in I' \\ x \not\preceq I}} \underbrace{(Z_x(T)>0)}_{\in\mathscr F_I \subseteq \mathscr G_I}   \right).
    \end{align*}
    This shows that $(\Lambda^T \preceq I) \in \mathscr G_I$ for any $I\in \mathcal I$ and hence that $\Lambda^T$ is optional with respect to $(\mathscr G_I)_{I\in \mathcal I}$.

\end{proof}

\subsubsection*{Mixed conditional expectations}
We prove a useful general lemma on conditional expectations with respect to a $\sigma$-algebra enlarged by a $\sigma$-algebra generated by a countable set of events. This lets us mix classical discrete conditioning on events and conditioning on $\sigma$-algebras.
\begin{lemma} \label{thm:countCond}
    Consider a generic probability space $(\Omega, \mathscr A, \P)$ and let $(H_i)_{i\in I} \subseteq \mathscr A$ be a countable partition of $\,\Omega$. Let $\mathscr F \subseteq \mathscr A$ be a separable $\sigma$-algebra and $\mathscr H = \sigma(H_i : i\in I) \subseteq \mathscr A$. For an integrable real random variable $X$, we have,
    \begin{align*}
        \Pc{X}{\mathscr F \vee \mathscr H}{} = \frac{\Pc{X \ind_{H_i}}{\mathscr F}{}}{\Pc{H_i}{\mathscr F}{}}, \quad \textnormal{on} \quad  H_i
    \end{align*}
    for all $i \in I$ where $\Pc{H_i}{\mathscr F}{} > 0$, $\P$-almost surely.
\end{lemma}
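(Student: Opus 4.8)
The plan is to write down an explicit $\mathscr F\vee\mathscr H$-measurable candidate $Y$ for $\Pc{X}{\mathscr F\vee\mathscr H}{}$ and to verify it is a version by checking the partial-averaging identity on a convenient generating $\pi$-system. The structural observation driving everything is that, since $(H_i)_{i\in I}$ is a countable partition, the family
\[
    \mathscr P = \{F\cap H_i : F \in \mathscr F,\ i\in I\}
\]
is a $\pi$-system --- the intersection of $F\cap H_i$ with $F'\cap H_j$ is $(F\cap F')\cap H_i$ if $i=j$ and $\emptyset$ otherwise --- and it generates $\mathscr F\vee\mathscr H$, because each $F\in\mathscr F$ is the countable disjoint union $\bigcup_{i}(F\cap H_i)$ and each $H_i=\Omega\cap H_i$ lies in $\mathscr P$. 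So I would first reduce the claim to verifying $\Px{Y\ind_A}{}=\Px{X\ind_A}{}$ for $A\in\mathscr P$, and then lift it to $\sigma(\mathscr P)=\mathscr F\vee\mathscr H$ by a Dynkin argument.

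The candidate is
\[
    Y = \sum_{i\in I}\frac{\Pc{X\ind_{H_i}}{\mathscr F}{}}{\Pc{H_i}{\mathscr F}{}}\,\ind_{H_i},
\]
with the convention that the $i$-th summand is set to $0$ on $\{\Pc{H_i}{\mathscr F}{}=0\}$. This convention is $\P$-a.s.\ immaterial on $H_i$, since
\[
    \Px{\ind_{H_i}\ind_{\{\Pc{H_i}{\mathscr F}{}=0\}}}{}=\Px{\Pc{H_i}{\mathscr F}{}\,\ind_{\{\Pc{H_i}{\mathscr F}{}=0\}}}{}=0 ,
\]
so $\Pc{H_i}{\mathscr F}{}>0$ $\P$-a.s.\ on $H_i$ for every $i$, and since $\Omega=\bigsqcup_i H_i$ the sum $Y$ is defined $\P$-a.s.\ and equals the claimed ratio on each $H_i$. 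Each summand is an $\mathscr F$-measurable function (a Borel quotient of $\mathscr F$-measurable functions, with the same convention) multiplied by $\ind_{H_i}\in\mathscr H$, so $Y$ is $\mathscr F\vee\mathscr H$-measurable; and pulling the nonnegative $\mathscr F$-measurable factor out of the conditional expectation given $\mathscr F$ yields $\Px{|Y|\ind_{H_i}}{}=\Px{\bigl|\Pc{X\ind_{H_i}}{\mathscr F}{}\bigr|}{}\le\Px{|X|\ind_{H_i}}{}$, hence $\Px{|Y|}{}\le\Px{|X|}{}<\infty$, so $Y$ is integrable.

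The averaging identity on $\mathscr P$ is then a one-line tower-property computation: for $F\in\mathscr F$ and $i\in I$, only the $i$-th summand of $Y$ survives on $H_i$, the factor $\Pc{H_i}{\mathscr F}{}^{-1}\Pc{X\ind_{H_i}}{\mathscr F}{}\ind_F$ is $\mathscr F$-measurable, and $\Pc{\ind_{H_i}}{\mathscr F}{}=\Pc{H_i}{\mathscr F}{}$, so
\[
    \Px{Y\ind_{F\cap H_i}}{}=\Px{\frac{\Pc{X\ind_{H_i}}{\mathscr F}{}}{\Pc{H_i}{\mathscr F}{}}\,\Pc{H_i}{\mathscr F}{}\,\ind_F}{}=\Px{\Pc{X\ind_{H_i}}{\mathscr F}{}\,\ind_F}{}=\Px{X\ind_{F\cap H_i}}{} .
\]
Both $A\mapsto\Px{Y\ind_A}{}$ and $A\mapsto\Px{X\ind_A}{}$ are finite signed measures on $\mathscr F\vee\mathscr H$ (the second by dominated convergence, as $X$ is integrable); they agree on the $\pi$-system $\mathscr P$, and in particular on each $H_i$, hence on $\Omega=\bigsqcup_i H_i$ by countable additivity, so the Dynkin $\pi$-$\lambda$ theorem gives equality on all of $\sigma(\mathscr P)=\mathscr F\vee\mathscr H$. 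Therefore $Y=\Pc{X}{\mathscr F\vee\mathscr H}{}$ $\P$-a.s., which is the assertion. I expect the only genuinely delicate point --- the main obstacle --- to be the bookkeeping around the $\P$-null sets $\{\Pc{H_i}{\mathscr F}{}=0\}$ together with the simultaneous (countable) choice of versions of $\Pc{H_i}{\mathscr F}{}$ and $\Pc{X\ind_{H_i}}{\mathscr F}{}$; separability of $\mathscr F$ is what makes this painless, allowing one, if preferred, to fix a single regular conditional probability $\Pc{\cdot}{\mathscr F}{}$ and run the whole argument at the level of measures.
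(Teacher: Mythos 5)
Your proof is correct, and it takes a genuinely different route from the paper's. You write down the explicit candidate $Y=\sum_{i}\Pc{X\ind_{H_i}}{\mathscr F}{}\,\Pc{H_i}{\mathscr F}{}^{-1}\ind_{H_i}$ and verify the partial-averaging identity on the generating $\pi$-system $\{F\cap H_i : F\in\mathscr F,\ i\in I\}$, closing with a Dynkin argument; the two delicate points --- the null sets $\{\Pc{H_i}{\mathscr F}{}=0\}$ and the fact that $\Omega$ is not itself in the $\pi$-system (so agreement there must be checked separately via countable additivity over the partition) --- are both handled. The paper argues in the opposite direction: it starts from the abstract conditional expectation $\Pc{X}{\mathscr F\vee\mathscr H}{}$, uses separability of $\mathscr F$ and $\mathscr H$ to represent it as $f(V,W)$ by Doob--Dynkin, freezes the $\mathscr H$-coordinate on each atom $H_i$ to obtain an $\mathscr F$-measurable version $Y_i$ there, and then identifies $Y_i$ as the quotient of the Radon--Nikodym derivatives of $F\mapsto\int_{F\cap H_i}X\di\P$ and $F\mapsto\Px{F\cap H_i}{}$ with respect to $\P\vert_{\mathscr F}$. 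A payoff of your approach is that it never invokes the separability hypothesis, so it proves a slightly more general statement; what the paper's route buys is that the formula is derived rather than guessed, falling out of the chain rule for Radon--Nikodym derivatives. One cosmetic remark: in your integrability estimate, the nonnegative $\mathscr F$-measurable factor you pull out should be $\bigl|\Pc{X\ind_{H_i}}{\mathscr F}{}\bigr|\,\Pc{H_i}{\mathscr F}{}^{-1}$, the one appearing in $|Y|\ind_{H_i}$ (the corresponding factor in $Y$ itself is signed), but the displayed inequality is the correct one.
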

\begin{proof}
    Since $\mathscr F$ and $\mathscr H$ are separable, we can find, respectively $\mathscr F$ and $\mathscr H$ measurable real random variables $V$ and $W$, such that
    \begin{align*}
        \Pc{X}{\mathscr F \vee \mathscr H}{} = f(V, W)
    \end{align*}
    for some measurable function $f\colon \mathbb R^2 \to \mathbb R$. As $\mathscr H$ is generated by a countable partition of $\Omega$, any $\mathscr H$ measurable variable must be constant on each set in the partition, so for each $i \in I$ we can define the variable
    \begin{align*}
        Y_i = f(V, w_i)
    \end{align*}
    which is a $\mathscr F$ measurable random variable, which satisfies $Y_i = \Pc{X}{\mathscr F \vee \mathscr H}{}$ on $H_i$.

    Fix an $i\in I$. For any $F \in \mathscr F$, we have by definition of conditional expectations that
    \begin{align} \label{eq:condexpdefprop}
        \int_{F\cap H_i} X \di\P = \int_{F\cap H_i} \Pc{X}{\mathscr F \vee \mathscr H}{} \di\P = \int_{F\cap H_i} Y_i \di \P.
    \end{align}
    Define two measures on $\mathscr F$, 
    \begin{align*}
        \nu_i(F) = \int_F X \di \P \quad \text{and} \quad \mu_i(F) = \Px{F \cap H_i}{},
    \end{align*}
    for all $F\in \mathscr F$. Then, $\nu_i \ll \mu_i$, and Eq. \ref{eq:condexpdefprop} is equivalently given as
    \begin{align*}
        \nu_i(F) = \int_F Y_i \di\mu_i,
    \end{align*}
    from which it is evident that $Y_i$ is the Radon-Nikodym derivative of $\nu_i$ with respect to $\mu_i$,
    \begin{align*}
        Y_i = \frac{\dd\nu_i}{\dd\mu_i}.
    \end{align*}

    As $\nu_i, \mu_i \ll \P\vert_\mathscr{F}$ we see that on $H_i$
    \begin{align*}
        \Pc{X}{\mathscr F \vee \mathscr H}{} = \frac{\dd\nu_i}{\dd\mu_i} = \frac{\dd\nu_i}{\dd\P\vert_\mathscr{F}}\left(\frac{\dd\mu_i}{\dd\P\vert_\mathscr{F}}\right)^{-1} = \frac{\Pc{X\ind_{H_i}}{\mathscr F}{}}{\Pc{H_i}{\mathscr F}{}},
    \end{align*}
    and the proof is completed.
\end{proof}

\subsubsection*{Extant branching property}
\begin{lemma}\label{thm:survBranching}
    Let $J$ be a finite optional line with respect to $(\mathscr G_I)_{I\in \mathcal I}$, and let $(f_x)_{x\in U}$ be a collection of non-negative, measurable functions. Then, for any $\ta\geq 0$,
    \begin{align*}
        \Qc{\prod_{x\in J} f_x \circ \T_x}{\mathscr G_J}{\ta} = \prod_{x\in J} \Qx{f_x}{\tau_x, \alpha_x}
    \end{align*}
    on the event $(\forall x\in J\colon Z_x(T)>0)$.
\end{lemma}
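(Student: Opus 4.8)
The plan is to establish the identity first for deterministic finite stopping lines and then transfer it to finite $\mathscr G$-optional lines by the usual partition argument. The pivotal preliminary observation is that the survival event is already visible in the enlarged filtration: by the fundamental decomposition (Eq. \ref{eq:fundaDecomp}), any branch extant at time $T$ either lies in some subtree $\T_x$ with $x \in I$, whence $(Z_x(T)>0)$, or it lies in the pruned tree $\K_I$, whence its presence is a functional of $\K_I$ and so $\mathscr F_I$-measurable; consequently $\Omega^T = (\K_I \text{ has an extant branch at } T) \cup \bigcup_{x \in I}(Z_x(T)>0) \in \mathscr G_I$, and likewise $\Omega^T \in \mathscr G_J$ for any finite optional line $J$. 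Since $\Qx{F}{\ta} = \Pc{F}{\Omega^T}{\ta}$ and $\Omega^T$ lies in these $\sigma$-algebras, the conditional expectations under $\Q_\ta$ and under $\P_\ta$ given $\mathscr G_I$ (resp.\ $\mathscr G_J$) agree $\Q_\ta$-almost surely, so I may compute throughout under $\P_\ta$.

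For a deterministic finite stopping line $I$, set $A_I = \bigcap_{x \in I}(Z_x(T)>0)$; on $A_I$ each $\tau_x < T$ (a branch born at time $\geq T$ has no extant progeny at $T$), so Assumption \ref{ass:notnull} applies at each root $(\tau_x,\alpha_x)$. The $2^{|I|}$ events obtained by intersecting, over $x \in I$, either $(Z_x(T)>0)$ or its complement form a finite partition of $\Omega$ generating $\sigma(Z_x(T)>0:x\in I)$, so $\mathscr G_I = \mathscr F_I \vee \sigma(Z_x(T)>0:x\in I)$, and Lemma \ref{thm:countCond} (applicable as $\mathscr F_I$ is countably generated) gives, on $A_I$,
\begin{align*}
    \Ec{\prod_{x\in I} f_x \circ \T_x}{\mathscr G_I}{\ta} = \frac{\Ec{\ind_{A_I}\prod_{x\in I} f_x \circ \T_x}{\mathscr F_I}{\ta}}{\Pc{A_I}{\mathscr F_I}{\ta}}.
\end{align*}
Since $\ind_{(Z_x(T)>0)} = \ind_{\Omega^T}\circ\T_x$, both numerator and denominator are instances of $\Ec{\prod_{x\in I}(f_x\ind_{\Omega^T})\circ\T_x}{\mathscr F_I}{\ta}$ --- with the given functions, respectively with $f_x\equiv 1$ --- so the strong branching property (Proposition \ref{thm:StrongBranching}, applicable since deterministic lines are $\mathscr F$-optional) turns the quotient into
\begin{align*}
    \prod_{x\in I}\frac{\Px{f_x \ind_{\Omega^T}}{\tau_x,\alpha_x}}{1 - p_{\tau_x,\alpha_x}^0(T)} = \prod_{x\in I}\Qx{f_x}{\tau_x,\alpha_x}
\end{align*}
by definition of $\Q_{\tau_x,\alpha_x}$; in particular $\Pc{A_I}{\mathscr F_I}{\ta} = \prod_{x\in I}(1 - p_{\tau_x,\alpha_x}^0(T)) > 0$ on $A_I$, so Lemma \ref{thm:countCond} does apply there. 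Combined with the first paragraph, this settles the deterministic case.

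To lift to a finite $\mathscr G$-optional line $J$, put $Y = \prod_{x\in J} f_x\circ\T_x$ and $\Psi = \prod_{x\in J}\Qx{f_x}{\tau_x,\alpha_x}$, a $\mathscr G_J$-measurable random variable (empty products being $1$). As $J$ is $\Q_\ta$-a.s.\ valued in the countable set of finite stopping lines, the standard properties of optional lines give, for each such $I$, that $(J=I)\in\mathscr G_I$, $(J=I)\subseteq(J\preceq I)$, and $(J=I)\cap A_J = (J=I)\cap A_I$, and summing over $I$ shows $A_J\in\mathscr G_J$. For $G\in\mathscr G_J$ I would split along $(J=I)$: there $Y = \prod_{x\in I} f_x\circ\T_x$ and $G\cap(J=I)\cap A_I \in \mathscr G_I$, so the deterministic case gives $\Qx{Y\ind_{G\cap(J=I)\cap A_I}}{\ta} = \Qx{\ind_{G\cap(J=I)\cap A_I}\prod_{x\in I}\Qx{f_x}{\tau_x,\alpha_x}}{\ta} = \Qx{\Psi\ind_{G\cap(J=I)\cap A_J}}{\ta}$; summing over the countably many $I$ (monotone convergence) and using $A_J = \bigsqcup_I(A_I\cap(J=I))$ yields $\Qx{Y\ind_{G\cap A_J}}{\ta} = \Qx{\Psi\ind_{G\cap A_J}}{\ta}$ for all $G\in\mathscr G_J$, which --- as $A_J\in\mathscr G_J$ and $\Psi$ is $\mathscr G_J$-measurable --- is precisely $\Qc{Y}{\mathscr G_J}{\ta} = \Psi$ on $A_J$.

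I expect the real work to be measure-theoretic bookkeeping rather than estimation: verifying that $\Omega^T$ and $A_J$ lie in the enlarged $\sigma$-algebras, that conditioning under $\Q_\ta$ honestly collapses to conditioning under $\P_\ta$ on $\Omega^T$, and --- the main obstacle --- that a finite $\mathscr G$-optional line restricts on each $(J=I)$ to the deterministic line $I$ with matching trace $\sigma$-algebras, which mirrors the deterministic-to-optional passage in the proof of Proposition \ref{thm:StrongBranching}. Once those are secured, Lemma \ref{thm:countCond} and Proposition \ref{thm:StrongBranching} carry the content, and recognizing the quotient as $\Q_{\tau_x,\alpha_x}(f_x)$ is immediate.
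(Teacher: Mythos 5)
Your proof is correct and follows essentially the same route as the paper's: the atom partition of $\sigma(Z_x(T)>0 : x\in I)$ combined with Lemma \ref{thm:countCond} and the strong branching property (Proposition \ref{thm:StrongBranching}) for deterministic lines, followed by the standard lift to finite optional lines, which the paper delegates to Jagers' argument but you write out explicitly. The only cosmetic difference is that you justify working under $\P_\ta$ by showing $\Omega^T\in\mathscr G_I$ up front, whereas the paper performs the change of measure inside the quotient using $(Z(T)>0)\subseteq\bigcap_{x\in I}(Z_x(T)>0)$.
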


\begin{proof}
    We initially prove the weak version of the result, holding for deterministic finite lines, so let $I \in \mathcal I$ be finite. Let $\{A_k\}$ be the atoms of $\sigma(Z_x(T) >0 : x \in I)$, that is, events of the form
    \begin{align*} 
    \bigcap_{x \in I} S_x, \where S_x \in \{(Z_x(T) > 0), (Z_x(T) = 0)\}
    \end{align*}
    of which there are at most $2^{|I|}$. The atoms form a partition of $\Omega$ and generate the same $\sigma$-algebra, $\sigma(Z_x(T) > 0 : x \in I) = \sigma(\{A_k\})$, so also,
    \begin{align*}
        \mathscr G_I = \mathscr F_I \vee \sigma(\{A_k\}).
    \end{align*}

    On the atom $\bigcap_{x \in I} (Z_x(T) > 0)$ we can thus employ Lemma \ref{thm:countCond} to see that
    \begin{align*}
       \Qc{\prod_{x\in I}f_x \circ \T_x}{\mathscr G_I}{\ta} &= \frac{\Qc{\prod_{x\in I}(f_x \circ \T_x)\,  \ind_{\bigcap_{x\in I} (Z_x(T) > 0)}}{\mathscr F_I}{\ta}}{\Qc{\bigcap_{x\in I} (Z_x(T) > 0)}{\mathscr F_I}{\ta}} \\[10pt]
       &= \frac{\Pc{\prod_{x\in I}f_x \circ \T_x\, \prod_{x\in I} \ind_{(Z_x(T) > 0)}}{\mathscr F_I}{\ta}}{\Pc{\prod_{x\in I}\ind_{(Z_x(T) > 0)}}{\mathscr F_I}{\ta}} \\[10pt]
       &= \frac{\Pc{\prod_{x\in I}(f_x\ind_{(Z(T)>0)}) \circ \T_x}{\mathscr F_I}{\ta}}{\Pc{\prod_{x\in I}\ind_{(Z(T) > 0)} \circ \T_x}{\mathscr F_I}{\ta}} \\[10pt]
       &= \frac{\prod_{x\in I} \Px{f_x ; Z(T)>0}{\tau_x,\alpha_x}}{\prod_{x\in I} \Px{Z(T)>0}{\tau_x,\alpha_x}} \\[10pt]
       &= \prod_{x\in I} \Qx{f_x}{\tau_x,\alpha_x},
    \end{align*}
    where we have used that $(Z(T)>0) \subseteq \bigcap_{x\in I} (Z_x(T)>0)$ to change the measure from $\Q_\ta$ to $\P_\ta$, and have applied the regular branching property (Proposition \ref{thm:StrongBranching}). Following \cite[Sec. 4]{jagers_general_1989} linearly, we get the strong extant branching property as stated.
\end{proof}

\subsubsection*{The genealogical branching property}
\begin{proof}[Proof of Theorem \ref{thm:lcabranching}]
    $\Lambda^T$ is finite and optional with respect to $(\mathscr G_I)_{I\in \mathcal I}$, so if we let $f_x = f_{k} \circ \G$ for $x \in \Lambda^T$ with relative rank $r_{\Lambda^T}(x) =  k$, we get from Lemma \ref{thm:survBranching}, 
    \begin{align*}
        \Qc{\prod_{ k = 1}^{N^T} f_{ k} \circ \G_{ k}}{\mathscr G_{\Lambda^T}}{\tau, \alpha} &= \Qc{\prod_{x \in \Lambda^T} f_{x} \circ \T_x}{\mathscr G_{\Lambda^T}}{\tau, \alpha} \\
        &= \prod_{x \in \Lambda^T} \Qx{f_x}{\tau_x,\, \alpha_x} \\
        &= \prod_{ k =  1}^{N^T} \Qx{f_k \circ \G}{\tau^T_{k}, \,\alpha^T_{ k}}
    \end{align*}
    on the event $\left(\forall x \in \Lambda^T: Z_x(T) > 0 \right) = \Omega^T$.
\end{proof}

\subsection{Genealogical branch length distribution}
\begin{proof}[Proof of Proposition \ref{thm:genBranchLength}]
    Using Proposition \ref{thm:branchingGenealogyProc} it is easy to see that for $u \in (0, T - \tau]$
    \begin{align*}
        (L^T \geq u) = (Z^T(\tau+u) = 1)
    \end{align*}
    so the closed survival function can be written as
    \begin{align*}
        \bar G^T_\ta(u) = \Qx{L^T\geq u}{\ta} = \Q_\ta(Z^T(\tau + u) = 1) = q_\ta^{T, 1}(\tau + u) = \frac{\partial_s F_\ta^T(0; \tau+ u)}{1 - p_\ta^0(T)}.
    \end{align*}

    If $\mu_\ta$ admits a continuously differentiable Lebesgue density, then by Proposition \ref{thm:inteqDifferentiable}, the closed survival function is differentiable on $(0, T - \tau)$, and its derivative must be the negative density of $L^T$,
    \begin{align*}
        g^T_\ta(u) = -\partial_u q_\ta^{T, 1}(\tau+u) = \frac{-\partial_u\partial_s F_\ta^T(0,\tau + u)}{1 - p_\ta^0(T)}
    \end{align*}
    for $u \in (0, T-\tau)$.
\end{proof}

\subsection{Genealogical offspring number}
\begin{proof}
    By Proposition \ref{thm:inteqDifferentiable}, since $g_\tau$ is continuous, we can differentiate the conditional generating function with respect to $t$, using Leibniz' integral rule,
    \begin{align*}
        \partial_t E_\tau^T(s;t) &= -s g_\tau^T(t - \tau) + h_{\tau, t - \tau}^T(E_{t-}^T(s; t))g_\tau^T(t - \tau) + \int_{0}^{t - \tau} \partial_t h_{\tau, u}\left( E_{\tau+u}^T(s; t) \right) g_\tau(u) \di u
    \end{align*}
    which we can rearrange, and evaluate in $t = \tau+\ell$ to get the integro-differential equation,
    \begin{align*}
        h^T_{\tau, \ell}(s) = \frac{\partial_\ell E_\tau^T (s;\tau + \ell)}{g^T_\tau(\ell)} - s + \frac{1}{g^T_\tau(\ell)} \int_{(0, \ell)} \partial_\ell h^{T}_{\tau, u}\left(E^T_{\tau + u}(s;\tau + \ell)\right)\, g_\tau(u) \di u.
    \end{align*}
    Uniqueness of the solution among the bounded functions is shown unsing a Grönwall argument similar to that in the proof of Theorem \ref{thm:GenFunRASP}.
\end{proof}

\subsection{Recursive distribution of the genealogy}
\begin{proof}[Proof of Proposition \ref{thm:recurDist}]
    If $|\gamma| \geq 2$, we partition the genealogy according to its first generation fundamental decomposition, condition the probability on $\mathscr G_{\Lambda^T}$, and apply the genealogical branching property (Theorem \ref{thm:lcabranching}) to see that
    \begin{align*}
        \Qx{\G \in F}{\ta} &= \Qx{L^T \in B_0, N^T = n, \, \bigcap_{k = 1}^{n} \left(\G_k \in F^{(k)}\right) }{\ta} \\
        &= \Qx{\Qc{\bigcap_{k = 1}^{n} \left(\G_k \in F^{(k)}\right)}{\mathscr G_{\Lambda^T}}{\ta} ; L^T \in B_0, N^T = n}{\ta} \\
        &= \Qx{\prod_{k = 1}^{n} \Qx{\G \in F^{(k)}}{\tau_k, \alpha_k} ; L^T \in B_0, N^T = n}{\ta}.
    \end{align*}

    If an the other hand $\gamma = \{0\}$ we simply have,
    \begin{align*}
        \Qx{\G \in F}{\ta} = \Qx{\G = \left(t, a, \{0\}, T - \tau\right)}{\ta} = \Qx{L^T = T - \tau}{}
    \end{align*}
    as $L^T = T - \tau$ if and only if $N^T = 0$. 
\end{proof}

\subsection{Radon-Nikodym derivative of the symmetric genealogy}
\begin{proof}[Proof of Corollary \ref{thm:genDensity}]
    Let $u \in \bar\Gamma$ and let $F$ and $F^{(k)}$ for $k \leq N(u)$ be fundamental events as characterized in Section \ref{sec:distGen}. Assume first that $|u| \geq 2$, we show that the recursive characterization of the probabilities of Proposition \ref{thm:recurDist} can be written as an integral with respect to the measure $\mathcal M_\tau$,
    \begin{align*}
        \Qx{\G \in F}{\tau} &= \Qx{\prod_{k = 1}^{n} \Qx{\G \in F^{(k)}}{\tau+\ell_0} ; L^T \in B_0, N^T = n_0}{\tau} \\
        &= \int_{B_0} g_\tau^T(\ell_0)\nu^T_{\tau, \ell_0}(n_0) \prod_{k = 1}^{n_0} \Qx{\G \in F^{(k)}}{\tau + \ell_0} \di \ell_0
    \end{align*}
    Applying the same procedure down the Neveu tree $u$, until we have integrated over all the internal branches, we have,
    \begin{align*}
        \Qx{\G \in F}{\tau} &= \int_{B_0} \cdots \int_{B_z} \prod_{x \in \mathring u} g_{\tau_x}^T(\ell_x) \nu_{\tau_x, \ell_x}(n_x) \prod_{x \in \Breve u} \Qx{\G = (\tau_x, \{0\}, T-\tau_x)}{\tau_x} \di \ell_z\cdots\di \ell_0 \\
        &= \int_{B_0} \cdots \int_{B_z} \prod_{x \in \mathring u} g_{\tau_x}^T(\ell_x) \nu_{\tau_x, \ell_x}(n_x) \prod_{x \in \Breve u} \bar G^T_{\tau_x}(T - \tau_x) \di \ell_z\cdots\di \ell_0 \\
        &= \int_F \prod_{x \in \mathring u} g_{\tau_x}^T(\ell_x) \nu_{\tau_x, \ell_x}(n_x) \prod_{x \in \Breve u} \bar G^T_{\tau_x}(T - \tau_x) \di \mathcal M_\tau(u, \ell)
    \end{align*}
    where the order of integration should satisfy the partial order $\preceq$ on $u$ and $z \in \{\mf x \mid x \in \Breve u\}$ is thus some maternal branch to a leaf.
\end{proof}

\subsection{Cardinality of tree spaces}
\begin{proposition}\label{thm:GammaCount}
     $\overline\Gamma$ is countable, $\Gamma$ is uncountable.
\end{proposition}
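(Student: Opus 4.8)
The plan is to handle the two assertions by elementary cardinality arguments. For the countability of $\overline\Gamma$, I would first note that the Ulam--Harris label set $U=\bigcup_{n\geq 0}\mathbb N^n$ is a countable union of countable sets, hence countable, so the family $\mathcal P_{\mathrm{fin}}(U)$ of all finite subsets of $U$ is countable as well (it is the countable union over $n$ of the collections of $n$-element subsets of a countable set). Since a finite Neveu tree is in particular a finite subset of $U$, we get $\overline\Gamma\subseteq\mathcal P_{\mathrm{fin}}(U)$, and therefore $\overline\Gamma$ is countable; fixing an enumeration of $U$ makes this an explicit injection $\overline\Gamma\hookrightarrow\mathbb N$ if one wants it concretely.

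For the uncountability of $\Gamma$, I would exhibit an injection of the uncountable set of functions $\{1,2\}^{\mathbb N_0}$ into $\Gamma$. Given $f\colon\mathbb N_0\to\{1,2\}$, define the ``caterpillar'' tree
\[
u_f=\{\,1^k : k\geq 0\,\}\ \cup\ \{\,1^k 2 : k\geq 0,\ f(k)=2\,\},
\]
where $1^0=0$. I would then verify the three Neveu conditions: \ref{cond:neveuA} holds since $1^0=0\in u_f$; \ref{cond:neveuB} holds since $\mf(1^{k+1})=1^k\in u_f$ and $\mf(1^k2)=1^k\in u_f$; and \ref{cond:neveuC} holds with $n_{1^k}=f(k)$ — the children of the spine branch $1^k$ are exactly $1^k1=1^{k+1}$, together with $1^k2$ precisely when $f(k)=2$, consecutively labelled as required — and with $n_{1^k2}=0$ for the leaf branches. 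The assignment $f\mapsto u_f$ is injective because $f(k)=2$ if and only if $1^k2\in u_f$, so distinct functions give distinct trees. Since $\{1,2\}^{\mathbb N_0}$ has the cardinality of the continuum, $\Gamma$ is uncountable.

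The only point needing care — and the single place a naive attempt fails — is the choice of the uncountable family. Arbitrary infinite rays $\{a_1\cdots a_k : k\geq 0\}$ with $a_i\in\{1,2\}$ are \emph{not} Neveu trees, because condition \ref{cond:neveuC} forces a branch with a unique child to have that child ranked $1$, so only the all-ones ray survives. Encoding the binary choices instead into the presence or absence of an extra rank-$2$ leaf hanging off each spine node repairs this while preserving all three axioms, and no further obstacle arises; the remaining verifications are routine.
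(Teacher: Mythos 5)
Your proof is correct and follows essentially the same strategy as the paper: countability of $\overline\Gamma$ by elementary counting (the paper writes $\overline\Gamma$ as a countable union of the finite sets $\Gamma^n$ of $n$-branch trees, while you embed it into the countable family of finite subsets of the countable label set $U$), and uncountability of $\Gamma$ via an injection of $\{1,2\}^{\mathbb N_0}$ into $\Gamma$. The paper's injection gives every branch in generation $n$ exactly $s_n$ offspring rather than hanging an optional rank-$2$ leaf off a spine, but both encodings respect condition \ref{cond:neveuC} and the two arguments are interchangeable.
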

\begin{proof}
    To see that $\overline{\Gamma}$ is countable, we consider the subspace of finite trees that have $n$ elements,
    \begin{align*}
        \Gamma^n = \{\gamma \in \Gamma \mid |\gamma| = n\}
    \end{align*}
    for $n \geq 1$, where it is known that each $\Gamma^n$ is finite. Clearly, $\overline\Gamma = \bigcup_{n\in \mathbb N_0} \Gamma^n$, which is now a countable union of finite sets, which is countable.

    To see that $\Gamma$ is uncountable we construct an injection from the uncountable space $\{1, 2\}^{\mathbb N_0}$ to $\Gamma$. Let $f\colon \{1, 2\}^{\mathbb N_0} \to \Gamma$ be given through the following recursion. Let $s = (s_0, s_1, s_2, ...) \in \{1, 2\}^{\mathbb N_0}$ and define the zero'th generation
    \begin{align*}
        \gamma_s = \{0\}
    \end{align*}
    end each subsequent $n$'th generation,
    \begin{align*}
        \gamma_s^n = \{xs_n \mid x \in \gamma_s^{n-1}, s \leq s_{n-1}\}
    \end{align*}
    for $n \geq 1$, so that we have
    \begin{align*}
        f(s) = \bigcup_{n \in \mathbb N_0} \gamma_s^n \in \Gamma.
    \end{align*}
    That is, for an infinite sequence $s = (s_0, s_1, s_2, ...)$, all branches in the $n$'th generation has $s_n$ offspring.

    To see that this is indeed an injection take two distinct sequences $s = (s_0, s_1, ...), s' = (s_0', s_1', ...) \in \{1, 2\}^{\mathbb N_0}$, $s \neq s'$. There must be a first element in the sequences that don't match, so let $m = \min\{ n \in \mathbb N_0 \mid s_n \neq s_n'\}$. The $m$'th generation of the trees $f(s)$ and $f(s')$ are thus not equal, and we have that $f(s) \neq f(s')$, i.e. $f$ is an injection.

    As $f$ is an injection we must have that
    \begin{align*}
        |\{1, 2\}^{\mathbb N_0}| \leq |f(\{1, 2\}^{\mathbb N_0})| \leq |\Gamma|
    \end{align*}
    and $\Gamma$ is thus uncountable.
\end{proof}

\section*{Aknowledgements}
S.B. acknowledges funding from the MRC Centre for Global Infectious Disease Analysis (reference MR/X020258/1), funded by the UK Medical Research Council (MRC). This UK funded award is carried out in the frame of the Global Health EDCTP3 Joint Undertaking. S.B. is funded by the National Institute for Health and Care Research (NIHR) Health Protection Research Unit in Modelling and Health Economics, a partnership between UK Health Security Agency, Imperial College London and LSHTM (grant code NIHR200908). Disclaimer: “The views expressed are those of the author(s) and not necessarily those of the NIHR, UK Health Security Agency or the Department of Health and Social Care.”. S.B. acknowledges support from the Novo Nordisk Foundation via The Novo Nordisk Young Investigator Award (NNF20OC0059309). SB acknowledges the Danish National Research Foundation (DNRF160) through the chair grant. S.B. acknowledges support from The Eric and Wendy Schmidt Fund For Strategic Innovation via the Schmidt Polymath Award (G-22-63345) which also funds F.M.A. C.W. is supported by the Independent Research Fund Denmark (grant number: DFF-8021-00360B).

\end{document}